\documentclass[12pt]{amsart}
\usepackage{amsthm,amssymb,amsmath,amscd,epic,eepic}
\usepackage{amsthm}
\usepackage[all]{xy}
\usepackage{graphicx}
\theoremstyle{plain}

%%%%%%%%%%%%%%%%%%%%%%%%%%%%%%%%%%%%%%%%%%%%%%%%%%%%%%%%%%%%%%%%%%%%%
%   Debug Mode:                                                     %
%%%%%%%%%%%%%%%%%%%%%%%%%%%%%%%%%%%%%%%%%%%%%%%%%%%%%%%%%%%%%%%%%%%%%

%\newcommand{\todo}[2][1]{\vspace{1 mm}\par \noindent
%\marginpar{\textsc{}} \framebox{\begin{minipage}[c]{#1
%\textwidth} \tt #2 \end{minipage}}\vspace{1 mm}\par}	
%\newcommand{\mute}[2] {{\ \scriptsize \ #1\ }}
%\newcommand{\printname}[1]
%       {\smash{\makebox[0pt]{\hspace{-1.0in}\raisebox{8pt}{\tiny #1}}}}
%\newcommand{\labell}[1] {\label{#1}\printname{#1}}
%
%%%%%%%%%%%%%%%%%%%%%%%%%%%%%%%%%%%%%%%%%%%%%%%%%%%%%%%%%%%%%%%%%%%%%
%   Clean Mode:                                                     %
%%%%%%%%%%%%%%%%%%%%%%%%%%%%%%%%%%%%%%%%%%%%%%%%%%%%%%%%%%%%%%%%%%%%%
%\newcommand     {\comment}[1]   {}
%\newcommand {\todo}[1] {}
\newcommand{\mute}[2] {}
\newcommand     {\printname}[1] {}
\newcommand{\labell}[1] {\label{#1}}
%%%%%%%%%%%%%%%%%%%%%%%%%%%%%%%%%%%%%%%%%%%%%%%%%%%%%%%%%%%%%%%%%%%%%%

\newtheorem{theorem}{Theorem}[section]
\newtheorem*{theorem-non}{Theorem}
\newtheorem{lemma}[theorem]{Lemma}
\newtheorem{proposition}[theorem]{Proposition}

\newtheorem{corollary}[theorem]{Corollary}

\newtheorem{remark}[theorem]{Remark}
\newtheorem{definition}[theorem]{Definition}
\newtheorem{example}[theorem]{Example}

\def    \calO             {{\mathcal O}}
\def \fk {{\mathfrak k}}

\def \af {{l}}
\def    \conv  {\operatorname{conv}}

\def    \inv    {^{-1}}
\def    \CP	{{\mathbb C}{\mathbb P}}
\def    \R	{{\mathbb R}}
\def    \P	{{\mathbb P}}

\def    \Q	{{\mathbb Q}}
\def    \C	{{\mathbb C}}
\def    \Z       {{\mathbb Z}}

\def    \F   {{\mathcal F}}
\def    \n	{{[n]}}
\def    \m	{{[m]}}
\def    \l	{{[\ell]}}
\def    \k	{{[k]}}
\def    \j	{{[j]}}

\def   \mat {M_n(\mathbb{Z})}

\def    \tLambda	{{\widetilde \Lambda}}
\def    \tXi	{{\widetilde \Xi}}
\def    \tlambda	{{\widetilde \lambda}}
\def    \tomega{{\widetilde \omega}}

\def    \tA	{{\widetilde A}}

\def    \ts	{{\widetilde s}}

\def    \tl	{{\widetilde l}}
\def    \ti	{{\widetilde i}}
\def    \tm	{{\widetilde m}}
\def    \ts	{{\widetilde s}}
\def    \tx	{{\widetilde x}}

\def    \tz	{{\widetilde z}}
\def    \tM	{{\widetilde M}}

\def    \teta	{{\widetilde \eta}}
\def    \calL   {{\mathcal L}}
\DeclareMathOperator{\hh}{H}

\def	\omegaFS {{\omega_{\operatorname{FS}}}}

\def    \Cone      {{\operatorname{Cone}}}
\def    \smooth      {{\operatorname{smooth}}}

\def    \Proj      {{\operatorname{Proj}}\, }

%\def  \1 {\bf{1}}

%\def    \calO   {{\mathcal O}}
%\def	\calH	{{\mathcal H}}
%\def    \fH	{{\calH}}
%
%\def    \U      {{\operatorname{U}}}
%\def    \SU      {{\operatorname{SU}}}
%\def    \SO      {{\operatorname{SO}}}
%
%\def	\DR	{{\text{DR}}}
%\def	\id 	{{\operatorname{id}}}
%\def	\Aut 	{{\operatorname{Aut}}}
%\def    \tPhi   {{\widetilde{\Phi}}}
%\def    \constant  {\operatorname{constant}}
%\def    \proj   {{\text{proj}}}
%
%\def    \Hom      {{\operatorname{Hom}}}
%
%\def    \DH     {Duistermaat-Heckman\ }
%\def    \half   {{\frac{1}{2}}}
%
%\DeclareMathOperator \Stab {Stab}
%\DeclareMathOperator \Lie {Lie}
%\def	\exc 	{{\operatorname{exc}}}
%\def	\red 	{{\operatorname{red}}}
%\def	\ker 	{{\operatorname{ker}}}
%\def	\exp	{{\operatorname{exp}}}
%\def	\d	{{\operatorname{d}}}
%\def	\diam	{{\operatorname{diam}}}
%\def	\star	{{\operatorname{star}}}
%
%\def	\tall   {{\operatorname{tall}}}
%\def	\short  {{\operatorname{short}}}
%
%\def	\Phibar	{{\ol{\Phi}}}
%\def    \omegabar       {{\widetilde{\omega}}}
%
%\def    \varphibar       {{\ol{\varphi}}}
%\def    \ybar       {{\ol{y}}}
%
%\def    \bs     {\boldsymbol}
%\def    \g      {{\mathfrak g}}

%%%%  WARNING: the macro  \k  has another meaning.
%%%%\def    \k      {{\mathfrak k}}

%\def    \tildeY {{\widetilde{Y}}}

% ====================================================================

\begin{document}

\title[Symplectic cohomological rigidity]
{Symplectic cohomological rigidity via toric degnerations}

\author[Milena Pabiniak]{Milena Pabiniak}
\address{Mathematisches Institut, Universit\"at zu K\"oln, Weyertal 86-90, D-50931 K\"oln, Germany}
\email{pabiniak@math.uni-koeln.de}

\author[Susan Tolman]{Susan Tolman}
\address{Department of Mathematics, University of Illinois at Urbana-Champaign,
 Urbana, IL 61801}
\email{stolman@math.uiuc.edu}

%\thanks{\emph{2010 Mathematics Subject Classification}.
%Primary 53D20, 53D35, 37J15.  Secondary 57S15, 52B20, 14M25.}

\begin{abstract}
In this paper we study whether symplectic toric manifolds are symplectically cohomologically rigid.
Here we say that symplectic cohomological rigidity holds for some family of symplectic manifolds if the members of that family can be distinguished by their integral cohomology rings and the cohomology classes of their symplectic forms.
We show how toric degenerations can be used to produce the symplectomorphisms necessary to answer this question. 
As a consequence we prove that symplectic cohomological rigidity holds for the family of symplectic Bott manifolds with rational symplectic form whose rational cohomology ring is isomorphic to $\hh^*((\CP^1)^n;\Q)$ for some $n$.
In particular, we classify such manifolds up to symplectomorphism.
Moreover, we prove that any symplectic toric manifold with rational symplectic form whose integral cohomology ring is isomorphic to $\hh^*((\CP^1)^n;\Z)$  is symplectomorphic to $(\CP^1)^n$  with a product symplectic structure.
\end{abstract}
\maketitle

%===============================================
\section{Introduction}
%===========================

One powerful method to study (smooth) manifolds is to calculate their invariants.
In particular,  if two manifolds are diffeomorphic, then their integral cohomology rings are isomorphic. 
We say that a family of manifolds is {\bf cohomologically rigid} if the converse holds,
that is,  if any two manifolds in the family with isomorphic integral cohomology rings are  diffeomorphic\footnote{In the literature,
the term ``cohomologically rigid" is sometimes defined in terms
of  {\em homeomorphism}, rather than {\em diffeomorphism}.}.

Most families of manifolds cannot be classified by their cohomology ring in this way, but their
are some important exceptions.
For example, cohomological rigidity holds for $2$ dimensional manifolds.
More interestingly, Freedman proved that  closed simply connected $4$ manifolds
are classified up to homeomorphism by their cohomology ring.
%(In the same paper, he proved the topological generalized Poincar\'e conjecture in dimension $4$.)
In contrast, there are  infinite families of closed simply connected
$4$ manifolds that are homeomorphic (and so have same cohomology ring) but not diffeomorphic.

Masuda and Suh asked if  cohomological rigidity holds for another important family: smooth toric varieties. 
Here, by {\bf toric variety} we mean a compact algebraic variety $X$ of complex dimension $n$ equipped with an action of the algebraic torus $(\C^\times)^n$ with a dense orbit.  Smooth toric
varieties are classified by, and can be studied in terms of, a purely combinatorial invariant, called a {\bf fan}.
As Masuda shows, this implies that smooth toric varieties  are equivariantly diffeomorphic exactly if their integral equivariant cohomology rings are isomorphic as algebras over the equivariant cohomology of a point \cite{M}.
In contrast, the classification of smooth toric varieties up to diffeomorphism is still poorly understood; in particular, Masuda and Suh's question is still open. 
Nevertheless, there have been some significant partial positive results. 

To state these results, we need several definitions. First, a  
{\bf Bott manifold} is the total space of a sequence of $\CP^1$ bundles
starting with $\CP^1$, where each $\CP^1$ bundle is the projectivization
of the Whitney sum of two holomorphic line bundles \cite{GK}.
Bott manifolds are smooth toric varieties: at each stage the torus action on the base lifts to the line bundles,
and hence to their projectivizations; moreover, there is an action of the multiplicative group $\C^\times$ on the total space that restricts
on each fiber to the standard action on $\CP^1$.
Second, we say that  {\bf strong cohomological rigidity} holds for a family of manifolds if every isomorphism between the
cohomology rings of any two manifolds in the family is induced by a diffeomorphism.

Choi and Masuda proved that strong cohomological rigidity holds for the family of Bott manifolds whose rational cohomology ring is isomorphic to that of a product of projective planes  \cite{CM}.
This reproves an earlier result of Masuda and Panov stating  that a Bott manifold $X$ is 
diffeomorphic to $\prod \CP^1$ if and only if $\hh^*(X;\Z)=\hh^*\big( \prod \CP^{1};\Z\big)$ \cite[Theorem 5.1]{MP}.
In \cite{CMSgenBott}, Choi, Masuda, and Suh considered a generalization of Bott manifolds where the
 base and fibers can be projective spaces of any dimension.
 They proved that such a manifold is diffeomorphic to $\prod_{j}\CP^{n_j}$ if and only if its integral cohomology ring is isomorphic to $\hh^*\big(\prod_{j}\CP^{n_j};\Z\big)$. 
Additionally,  strong cohomological rigidity holds for Bott manifolds of dimension at most $8$ \cite{C}, and for the family Bott manifolds for which at most one bundle in the fibration is not trivial \cite{CS}. For a more detailed survey, see \cite{CMSsurvey}.

In this paper, we study the symplectic counterpart of
the question posed by Masuda and Suh.
We  say that a family of symplectic manifolds is {\bf symplectically cohomologically rigid} if two 
symplectic manifolds $(M,\omega)$ and $(\tM,\tomega)$ in the family are symplectomorphic
exactly if  there  is isomorphism $\hh^*(M;\Z) \rightarrow \hh^*(\tM;\Z)$ sending the class $[\omega]$ to the class $[\tomega]$.
Our main goal is to determine if the family of symplectic toric manifolds is symplectically cohomologically rigid. 
Here, by  {\bf symplectic toric manifold} we mean a  $2n$-dimensional closed connected symplectic manifold $(M,\omega)$ equipped with an effective $(S^1)^n$ action with {\bf moment map} $\mu \colon M \to \R^n$, i.e, 
$\iota_{\xi_i} \omega = - d \mu_i$ for each $i \in \{1,\dots,n\}$, where $\xi_i$ is the vector
field induced on $M$ by the $i$'th copy of $(S^1)$.
The moment polytope $\Delta := \mu(M)$ of $M$
is a {\bf smooth}  polytope, i.e.,
 $\Delta$ is a rational polytope and the primitive integral vectors perpendicular to the facets meeting at
 at any one vertex of the {\bf moment polytope} $\Delta$ form a $\Z$ basis for $\Z^n$.
As  Delzant proved, these polytopes classify symplectic toric manifolds:
If two symplectic toric manifolds have the same moment polytope,
then there exists an equivariant symplectomorphism between them that intertwines the moment maps.
Moreover, every smooth polytope arises as the moment polytope of an $(S^1)^n \subset (\C^\times)^n$ invariant K\"ahler form on
a smooth toric variety.
As a  consequence,  every symplectic toric manifold is equivariantly diffeomorphic to a smooth toric variety.

In contrast, not much is known about the classification of symplectic toric manifolds up
to symplectomorphism.
Karshon, Kessler and Pinsonnault proved that any $4$-dimensional symplectic toric  manifold admits only finitely many inequivalent toric action, that is, it is only symplectomorphic to a finite number of distinct symplectic toric manifolds \cite{KKP}. 
This result was later generalized to all dimensions by McDuff  \cite{McD}.
Even more relevantly for our paper,  she also  proved that the family of
symplectic toric manifolds whose cohomology ring is isomorphic to that of a product of projective spaces is symplectically cohomologically rigid  \cite[Section 2.4]{McD}.

In this work, we prove that strong symplectic cohomological rigidity  holds for the family of symplectic Bott manifolds
whose rational cohomology ring is isomorphic to that of a product of $\CP^1$'s. 
More precisely, we prove the theorem below, which also appears in Section~\ref{sec proof} as Theorem~\ref{main theorem}.
As we explain in Section~\ref{sec Bott manifolds}, 
a symplectic Bott manifold is a Bott manifold  equipped with an $(S^1)^n$ invariant K\"ahler form.
Our main tool is a new method of constructing symplectomorphism between
symplectic toric manifolds using toric degeneration, which is described   in more detail below.

\begin{theorem-non}
Let $(M,\omega)$ and $(\tM, \tomega)$ be symplectic  Bott manifolds with rational symplectic forms\footnote{The result still holds if
some multiples of the symplectic forms are  rational. } 
such that $$\hh^*(M;\Q) \simeq \hh^*(\tM;\Q)  \simeq \hh^*( (\CP^1)^n;\Q).$$ 
Given a ring isomorphism $F \colon \hh^*(M;\Z) \rightarrow \hh^*(\tM;\Z)$ 
such that \footnote{By a slight abuse of notation,  we also denote the induced map from
$\hh^*(M;\R)$ to $ \hh^*(\tM;\R)$ by $F$.}
$F([\omega]) = [\tomega]$, there exists a symplectomorphism $f$
from $(\tM,\tomega)$ to $(M,\omega)$ so that the induced map $f^*\colon \hh^*(M;\Z) \rightarrow \hh^*(\tM;\Z)$  is equal to $F$.
\end{theorem-non}

Given $\lambda \in \R^n$ such that $\lambda_i \neq 0$ for all $i$,
let $\omega_\lambda \in \Omega^2((\CP^1)^n)$ be the symplectic form $\sum_i \lambda_i \pi_i^*(\omegaFS)$, where $\pi_i \colon (\CP^1)^n \to \CP^1$ denotes the projection onto the $i$-th factor,
and $\omegaFS$ is the Fubini-Study form on $\CP^1$ with area $1$.

\begin{corollary}\label{z trivial}
Let $(M,\omega)$  be a symplectic toric manifold with rational symplectic form, 
and let $F \colon \hh^*(M;\Z) \to \hh^*((\CP^1)^n;\Z)$ be an isomorphism.
Then there exists $\lambda \in \Q^n$
with $\lambda_i \neq 0$ for all $i$ and a symplectomorphism $f$ from 
$((\CP^1)^n, \omega_\lambda)$
to
$(M,\omega)$  
so that $f^* = F$.
\end{corollary}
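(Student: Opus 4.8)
The plan is to reduce the Corollary to the Theorem. I would take $(\tM,\tomega)$ to be $(\CP^1)^n$ equipped with a K\"ahler product form $\omega_\mu$ for a suitable $\mu$ with positive entries, apply the Theorem there, and then recover the general sign pattern of $\lambda$ by composing with a product of complex conjugations.

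First I would extract $\lambda$. Restricting $F$ to degree two gives an isomorphism $\hh^2(M;\Q)\to\hh^2((\CP^1)^n;\Q)\cong\Q^n$, where we use the basis $x_i=\pi_i^*[\omegaFS]$ of the target; since $\omega$ is rational we may write $F([\omega])=\sum_i\lambda_i x_i=[\omega_\lambda]$ with $\lambda\in\Q^n$. Because $\omega$ is symplectic, $[\omega]^n\neq 0$ in $\hh^{2n}(M;\R)$, and since $F$ is a ring isomorphism, $[\omega_\lambda]^n=F([\omega])^n\neq 0$; as $[\omega_\lambda]^n=n!\,(\prod_i\lambda_i)\,x_1\cdots x_n$, this forces $\lambda_i\neq 0$ for every $i$, giving the $\lambda$ of the statement.

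Next I would check that $(M,\omega)$ is a symplectic Bott manifold, so that the Theorem applies. By Delzant's classification, $(M,\omega)$ is equivariantly symplectomorphic to the symplectic toric manifold $(X_\Delta,\omega_\Delta)$ associated to its moment polytope $\Delta=\mu(M)$, where $X_\Delta$ is the smooth toric variety whose fan is the normal fan of $\Delta$, and $\omega_\Delta$ is an $(S^1)^n$-invariant K\"ahler form with moment polytope $\Delta$. Since $\hh^*(X_\Delta;\Z)\cong\hh^*(M;\Z)\cong\hh^*((\CP^1)^n;\Z)$, the variety $X_\Delta$ has second Betti number $n$, hence exactly $2n$ torus-invariant divisors, and the combinatorial structure of such a fan forces $X_\Delta$ to be a Bott manifold (see Section~\ref{sec Bott manifolds}); thus $(M,\omega)$ is a symplectic Bott manifold and in particular $\hh^*(M;\Q)\cong\hh^*((\CP^1)^n;\Q)$. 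I expect this to be the only step of real substance; everything else is formal, with the Theorem supplying the symplectomorphism.

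To finish, set $\epsilon_i=\operatorname{sign}\lambda_i$ and let $g\colon(\CP^1)^n\to(\CP^1)^n$ be the diffeomorphism applying complex conjugation on the $i$-th factor for each $i$ with $\lambda_i<0$. Then $g^*(\pi_i^*\omegaFS)=\epsilon_i\,\pi_i^*\omegaFS$, so $g$ is a symplectomorphism $((\CP^1)^n,\omega_{|\lambda|})\to((\CP^1)^n,\omega_\lambda)$ with $g^*[\omega_\lambda]=[\omega_{|\lambda|}]$, where $|\lambda|=(|\lambda_1|,\dots,|\lambda_n|)$ has positive entries. Now $((\CP^1)^n,\omega_{|\lambda|})$ is a symplectic Bott manifold with rational symplectic form and rational cohomology $\hh^*((\CP^1)^n;\Q)$, and $F':=g^*\circ F$ is a ring isomorphism $\hh^*(M;\Z)\to\hh^*((\CP^1)^n;\Z)$ with $F'([\omega])=[\omega_{|\lambda|}]$. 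Applying the Theorem to $(M,\omega)$ and $((\CP^1)^n,\omega_{|\lambda|})$ with $F'$ yields a symplectomorphism $f'\colon((\CP^1)^n,\omega_{|\lambda|})\to(M,\omega)$ with $(f')^*=F'$; then $f:=f'\circ g^{-1}\colon((\CP^1)^n,\omega_\lambda)\to(M,\omega)$ is a symplectomorphism with $f^*=(g^*)^{-1}\circ g^*\circ F=F$, as required.
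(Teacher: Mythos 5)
Your overall route is the same as the paper's: extract $\lambda\in\Q^n$ from $F([\omega])$ and get $\lambda_i\neq 0$ from $[\omega]^n\neq 0$, reduce to the case $\lambda_i>0$ by composing with complex conjugations, reduce $(M,\omega)$ to a symplectic Bott manifold, and then quote the main theorem; the bookkeeping with $g$, $F'$ and $f=f'\circ g^{-1}$ is correct.

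The gap is in the step you yourself flag as the only substantive one. Your justification that $(M,\omega)$ is a symplectic Bott manifold is: $b_2=n$, hence $X_\Delta$ has exactly $2n$ torus-invariant divisors, and ``the combinatorial structure of such a fan forces $X_\Delta$ to be a Bott manifold.'' That inference is false: a smooth complete toric $n$-fold with $2n$ invariant divisors need not be a Bott manifold. For instance, $\CP^3$ blown up at two torus-fixed points is a smooth projective toric $3$-fold with $6$ invariant divisors, but its moment polytope has triangular facets, so it is not combinatorially a cube and the variety is not a Bott manifold. The divisor count (equivalently, the Betti number $b_2$) is not enough; one genuinely needs the ring structure. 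This is exactly what Proposition~\ref{cube} of the paper supplies: the isomorphism of cohomology rings (reduced mod $2$) makes $\hh^*(M;\Z_2)$ a BQ-algebra isomorphic to that of the cube, so by Masuda--Panov (Theorem 5.5 of \cite{MP}) the quotient polytope of $M$ is combinatorially an $n$-cube, and then their Corollary 3.5 gives a unimodular transformation taking the moment polytope to a Bott polytope, whence Delzant's theorem produces the symplectomorphism to a symplectic Bott manifold. So you should replace your combinatorial claim with an appeal to Proposition~\ref{cube} (or reproduce the Masuda--Panov argument); with that substitution the rest of your proof goes through and agrees with the paper's.
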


\begin{proof}

As a consequence of  results of  Masuda and Panov in \cite{MP}, any symplectic toric manifold whose integral cohomology ring is isomorphic to $\hh^*( (\CP^1)^n;\Z)$ is symplectomorphic to a symplectic Bott manifold; 
see  Proposition \ref{cube} in this paper for more details.
Therefore, we may assume that $(M,\omega)$ is a symplectic Bott manifold.

Since $\omega$ is rational, 
$F([\omega]) = \left[ \sum_i \lambda_i \pi_i^*(\omegaFS)\right]$ for some $\lambda_i \in \Q$ for all $i$.
Since $\omega$ is symplectic $[\omega]^n \neq 0$, and so $\lambda_i \neq 0$ for all $i$.
Since complex conjugation induces a symplectomorphism from $(\CP^1, \omegaFS)$
to $(\CP^1, -\omegaFS)$, we may assume without loss of generality that $\lambda_i > 0$ for all $i$; hence,  $(\CP^1)^n,\omega_\lambda)$ is also a symplectic Bott manifold.  

The claim now follows immediately from the theorem above.

\end{proof}

Another corollary can be obtained by building on a result of Illinskii \cite{I} (reproved in \cite{MP}):  Let
$X$ be a smooth toric variety of complex dimension $n$. If a circle subgroup of $(\C^\times)^n$ acts  semifreely with isolated fixed points, then 
$X$ is diffeomorphic to $(\CP^1)^n$.

\begin{corollary}
Let $(M,\omega)$  be a $2n$-dimensional symplectic toric manifold with rational symplectic form.
If a circle subgroup of $(S^1)^n$ acts semifreely with isolated fixed points, then $(M,\omega)$ is symplectomorphic to $((\CP^1)^{n}, \omega_\lambda)$ for some $\lambda \in \Q^n$ such that $\lambda_i > 0$ for all $i$.
\end{corollary}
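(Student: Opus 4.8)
The plan is to deduce this from Corollary~\ref{z trivial} once we know that $M$ is diffeomorphic to $(\CP^1)^n$, the latter coming from the result of Illinskii quoted above. Since $(M,\omega)$ is a symplectic toric manifold, it is equivariantly diffeomorphic to a smooth toric variety $X$ of complex dimension $n$, the $(S^1)^n$ action on $X$ being the restriction of the $(\C^\times)^n$ action. Let $\Lambda \subset (S^1)^n$ be the circle subgroup which, by hypothesis, acts on $M$ semifreely with isolated fixed points; transporting through the equivariant diffeomorphism, $\Lambda$ acts on $X$ semifreely with isolated fixed points. Let $\Lambda_\C \subset (\C^\times)^n$ be the algebraic circle subgroup whose maximal compact subgroup is $\Lambda$.

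The first point is to check that $\Lambda_\C$ also acts on $X$ semifreely with isolated fixed points. Cover $X$ by the affine charts $U_\sigma \cong \C^n$ around its $(\C^\times)^n$-fixed points, on which $(\C^\times)^n$ — and hence $\Lambda_\C$ and $\Lambda$ — acts linearly. In each such chart the weights of $\Lambda$ and of $\Lambda_\C$ on $T_pX$ at the corresponding fixed point $p$ are the same integers. Since $\Lambda$ has isolated fixed points these weights are all nonzero, so the $\Lambda$-fixed locus in $U_\sigma$ is exactly $\{p\}$; as the charts cover $X$ this gives $X^{\Lambda} = X^{(\C^\times)^n}$, and likewise $X^{\Lambda_\C} = X^{(\C^\times)^n}$, a finite set. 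Semifreeness of $\Lambda$ means all these weights lie in $\{-1,1\}$, which then also yields semifreeness of $\Lambda_\C$. By Illinskii's theorem \cite{I} (reproved in \cite{MP}), $X$, and therefore $M$, is diffeomorphic to $(\CP^1)^n$; in particular there is a ring isomorphism $F\colon \hh^*(M;\Z) \to \hh^*((\CP^1)^n;\Z)$.

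Now apply Corollary~\ref{z trivial} to $(M,\omega)$, which has rational symplectic form, and to $F$: this produces $\lambda \in \Q^n$ with $\lambda_i \neq 0$ for all $i$ and a symplectomorphism $f$ from $((\CP^1)^n, \omega_\lambda)$ to $(M,\omega)$. Finally, exactly as in the proof of Corollary~\ref{z trivial}, set $\lambda'_i := |\lambda_i| > 0$; precomposing $f$ with the product of complex conjugations on those factors $\CP^1$ for which $\lambda_i < 0$ — a symplectomorphism from $((\CP^1)^n, \omega_{\lambda'})$ to $((\CP^1)^n, \omega_\lambda)$ — yields a symplectomorphism from $((\CP^1)^n, \omega_{\lambda'})$ to $(M,\omega)$, as desired.

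The main obstacle is the middle step: transferring the semifreeness and isolated-fixed-point hypotheses from the compact circle $\Lambda$ to its complexification $\Lambda_\C$ so that Illinskii's result applies. Once the fixed-point data of the two actions are identified through the standard linear charts of the toric variety this is routine, and everything else is a direct appeal to Corollary~\ref{z trivial} (hence to the main theorem) together with the quoted classification result.
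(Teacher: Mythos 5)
Your proof is correct and follows exactly the route the paper intends (and leaves implicit): transfer the semifree circle action to the associated smooth toric variety, invoke Illinskii's theorem to get a diffeomorphism with $(\CP^1)^n$, and then apply Corollary~\ref{z trivial}. Your two extra verifications --- passing from the compact circle $\Lambda$ to $\Lambda_\C$ via the linear charts, and composing with conjugations to replace $\lambda$ by $|\lambda|$ so that $\lambda_i>0$ --- are sound and simply make explicit details the paper glosses over.
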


Finally, as  a corollary of our proof, we obtain a complete classification (up to symplectomorphism) of symplectic  Bott manifolds with rational symplectic forms whose rational cohomology rings are  isomorphic to $\hh^*( (\CP^1)^n;\Q)$ up to symplectomorphism; see Corollary \ref{cor classification}.

We will now briefly explain how we use toric degenerations  to  construct the symplectomorphisms required for the proof of the main theorem. 
The main tool for constructing symplectomorphisms is a combination of results of Anderson and Harada-Kaveh.
Given a smooth projective variety $X$, of complex dimension $n$, equipped with a very ample line bundle, satisfying certain conditions, Anderson constructs a toric degeneration, 
i.e.,  a flat family $\pi \colon \mathfrak{X} \rightarrow \C$ with generic fiber $X$ so that the  special fiber $ X_0=\pi^{-1}(0)$ is a (not necessarily normal) toric variety (\cite[Theorem 2]{A}).
Harada and Kaveh observed that  one can build a symplectic form $\tilde{\omega}$ on 
$\mathfrak{X}$ and use the flow of a certain vector field to  obtain a surjective continuous map $\phi \colon (X, \omega) \rightarrow  (X_0,\omega_0:= \tilde{\omega}_{|X_0})$, which is a symplectomorphism when restricted to a particular dense open subset $U$ of $X$ (\cite[Theorem A]{HK}). 
In especially nice situations this map is actually a symplectomorphism from $X$ to $X_0$.

{\bf Organization} The paper is organized as follows. In Section \ref{sec degenerations} we explain how to obtain symplectomorphisms between symplectic toric manifolds using  toric degenerations. 
Section \ref{sec Bott manifolds} is devoted to symplectic Bott manifolds.  We construct the symplectomorphism between specific symplectic Bott manifolds in  Section \ref{sec tech}. Finally, the main result is  proved in Section \ref{sec proof}.

{\bf Acknowledgements.} The first author was supported by a fellowship SFRH/BPD/87791/2012 of the 
Funda\c{c}\~ao para a Ci\^encia e a Tecnologia (FCT, Portugal)
 during the first stage of the project and by the DFG (Die Deutsche Forschungsgemeinschaft) grant CRC/TRR 191 ``Symplectic Structures in Geometry, Algebra and Dynamics" during the second stage of the project.
The second author was partially supported by NSF grant DMS \#1206365,
and by a Simon's Foundation Collaboraton Grant.

%==================================================================
\section{Symplectomorphisms coming from toric degenerations}\label{sec degenerations}
%===================================================================
In this section we prove that two  symplectic toric manifolds
are symplectomorphic if their moment polytopes are related by a ``slide" (Proposition \ref{nice cond give sympl}).
Our  main tool is a  theorem of Harada and Kaveh which, in many cases,
gives a map, from a given smooth projective variety $X$ with integral symplectic form to toric variety $X_0$,  which is a symplectomorphism when restricted to an open dense subset.

Consider a
smooth projective variety $X$ of complex dimension $n$  equipped with a
very ample Hermitian line bundle $\calL$. 
Let $L:=\hh^0(X, \mathcal{L})$ denote the vector space of holomorphic sections. 
The Hermitian structure on $\calL$ induces a Hermitian structure on  $L$,
and hence a Fubini-Study form $\omegaFS$ on $\P(L^*)$.
Since the Kodaira embedding 
$\Phi_\calL \colon X \rightarrow \P (L^*)$ 
is holomorphic,
the pull-back $\omega :=\Phi_\calL^*(\omegaFS)$ is a K\"ahler form on $X$; moreover,
$[\omega]$ is the first Chern class of $\calL$.

Fix a coordinate system $(u_1,\dots,u_n)$ near a point $p \in X$,
that is, assume that $u_i \in \C(X)$ satisfies $u_i(p) = 0$ for all $i$,
and that $du_1,\dots,du_n$ are linearly independent.
Here,  $\C(X)$ denotes the ring of rational functions on $X$.
We define a function $\nu \colon \C(X)\setminus \{0\} \rightarrow \Z^n$ as follows: 
If a nonzero function $f \in \C(X)$ is regular at $p$, then nearby it can be
represented as a power series 
$\sum_{\alpha \in \Z^n_{\geq 0}} c_\alpha u_\alpha$.  Define
$$\nu(f) := \min \{\alpha \mid c_\alpha \neq 0\},$$
where the minimum is taken with respect to the lexicographical order.
More generally, if $f,g \in \C(X)$ are regular at $p$, define $\nu(f/g) = \nu(f) - \nu(g)$.
We call $\nu$ the {\bf valuation} associated to the  coordinate system.
As noted in \cite[Example 3.2]{HK}, this function
satisfies the axioms for a
{\em valuation with one-dimensional leaves}, as described in  \cite[Definition 3.1]{HK}.
In particular, $\nu(fg) = \nu(f) + \nu(g)$ for all $f,g \in \C(X) \setminus \{0\}$.
Moreover, given any finite dimensional subspace $E \subset \C(X)$, the cardinality of the image $ \nu(E \setminus \{0\})$
is the dimension of $E$ \cite[Proposition 3.4]{HK}.

Let $L^m$ denote
the image of $L^{\otimes m}$ in $\hh^0(X, \mathcal{L}^{\otimes m})$, and   let
$R=\C[X]=\oplus_{m\geq 0}L^m$ be 
the homogeneous coordinate ring of $X$ with respect to the embedding Kodaira embedding $\Phi_\calL$.
Choose a non-zero element $h \in L$ and identify  
$L^m$ with a subspace of $\C(X)$ by sending $f \in L^m$ to $f/h^m \in \C(X)$. 
Then we  can define an additive  semigroup
$$S=S(\nu,\calL)=\bigcup _{m\geq 0} \{ (m, \nu(f/h^m)\,|\,f \in L^m \setminus \{0\}\,\} = \bigcup_{m \geq 0} \{m\} \times S_m,$$
where $S_m:=\{  \nu(f/h^m) \mid f \in L^m \setminus \{0\}\,\}$. 
The dependence on $h$ is minor: replacing $h$ by $h'$ simply shifts $S_m$ by  $m\, \nu (h/h')$. Hence, we will usually omit $h$.
The {\bf Okounkov body} associated to $S$ is 
$$ \Delta=\Delta(S)=\overline{\conv( \bigcup_{m>0} \{x/m\,|\, (m,x) \in S\})}\subset \R^n.$$
If
$S$ is finitely generated, then the Okounkov body
$\Delta$ is a rational polytope. 

Finally, given a finitely generated semigroup $S \subset \Z \times \Z^n$,
there is a natural $(\C^\times)^n$ action on  the variety $X_0 := \Proj \C[S]$,
making it into a  (not necessarily normal) toric variety

We will need a corollary of the  following theorem, which is a slight variant of Theorem 3.25 in  \cite{HK}. 

\begin{theorem}\label{HK} 
Let $X$ be a smooth projective variety of complex dimension $n$, 
equipped with a very ample Hermitian line bundle $\calL$, and let $\omega$ be the 
associated K\"ahler form.
Let $S$ be the semigroup associated to a valuation derived from
a coordinate system on $X$. 
Assume that $S$  is finitely generated;
let $\Delta$  and $X_0 := \Proj \C[S]$
be the associated Okounkov body and toric variety, respectively.
\begin{enumerate}
\item
There's a linear $(\C^\times)^n$ action on $\C^{N+1}$,
 an $(S^1)^n$ invariant symplectic form $\Omega$ on $\CP^N$ with moment map $\mu_{\Omega} \colon \CP^N \to \R^n$,
and an equivariant embedding 
$X_0   \hookrightarrow \CP^N$ such that $\mu_{\Omega} (X_0) = \Delta$.
\item There's a surjective continuous map  $\phi \colon X \to X_0$
that restricts to a symplectomorphism from a dense open subset $U \subset X$ to $(X_0)_\smooth$,
the set of smooth points in $X_0$, with the symplectic form induced from $\Omega$. %$\Omega|_{(X_0)_\smooth}$.
 %(with the symplectic form equal to the pull back of $\Omega$ via the embedding from (1) and denoted by  $\Omega_{|X_0}$).
\end{enumerate}
\end{theorem}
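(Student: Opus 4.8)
The statement is obtained from \cite[Theorem 3.25]{HK} (compare \cite[Theorem A]{HK}) by a minor reformulation, so the plan is to recall the construction underlying it and indicate the bookkeeping needed in the present setting; the two ingredients are Anderson's algebraic toric degeneration and the gradient-Hamiltonian flow of Harada--Kaveh. For part (1), the first point is that the valuation $\nu$ equips the section ring $R=\C[X]=\bigoplus_{m\ge 0}L^m$ with a $\Z^n$-filtration whose associated graded ring is precisely $\C[S]$. Since $S$ is finitely generated, Anderson's theorem \cite[Theorem 2]{A} produces --- after replacing $\calL$ by a sufficiently divisible Veronese power $\calL^{\otimes r}$, which changes neither $X=\Proj R$ nor $X_0$ --- a flat family $\pi\colon\mathfrak{X}\to\C$ with $\pi^{-1}(t)\cong X$ for $t\neq 0$ and $\pi^{-1}(0)=X_0=\Proj\C[S]$, realized as a closed subvariety of $\CP^N\times\C$ via the Rees algebra of the filtration. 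A basis of $\nu$-homogeneous elements of $L^r$ provides homogeneous coordinates $z_0,\dots,z_N$ on $\CP^N$ on which $(\C^\times)^n$ acts diagonally, the weight of $z_j$ being the $\nu$-value of the $j$-th basis vector; this linear $(\C^\times)^n$ action on $\C^{N+1}$ commutes with the $\C^\times$ rescaling the base of $\pi$, and its restriction to $t=0$ is the desired equivariant embedding $X_0\hookrightarrow\CP^N$. Giving $\mathcal{O}_{\CP^N}(1)$ its standard Hermitian metric and rescaling by $1/r$ produces an $(S^1)^n$-invariant symplectic form $\Omega$ on $\CP^N$ with moment map $\mu_\Omega$; since $X_0$ is the projective toric variety defined by $S$, the standard description of moment images of projectively embedded toric varieties gives $\mu_\Omega(X_0)=\conv\{\,x/r : (r,x)\in S\,\}=\Delta$, the last equality holding for $r$ sufficiently divisible because $S$ is finitely generated. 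This proves (1).

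For part (2), write $\Omega|_{\mathfrak X}$ for the restricted Kähler form and $\omega_t$ for its further restriction to $\pi^{-1}(t)$. On $\pi^{-1}(1)\cong X$ the form $\omega_1$ is Kähler of class $[\omega]$, so a Moser argument along the segment $s\mapsto(1-s)\omega+s\omega_1$ of Kähler forms in a fixed class yields a symplectomorphism $(X,\omega)\to(\pi^{-1}(1),\omega_1)$ isotopic to the identity; it therefore suffices to flow out of $(\pi^{-1}(1),\omega_1)$. Following Harada--Kaveh, one uses the gradient-Hamiltonian vector field $V=-\operatorname{grad}(\Re\pi)/\|\operatorname{grad}(\Re\pi)\|^2$ of $\pi$ with respect to the Kähler metric of $\Omega|_{\mathfrak X}$, defined wherever $d\pi\neq 0$. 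By the Cauchy--Riemann equations its flow preserves each real fibre $\pi^{-1}(t)$ and decreases $\Re\pi$ at unit speed, and a Moser-type computation shows that the restricted forms $\omega_t$ are carried to one another along it; hence, whenever the time-$s$ flow is defined, it restricts to a symplectomorphism between open subsets of $\pi^{-1}(1)$ and $\pi^{-1}(1-s)$. By \cite[Theorem 3.25]{HK} the flow is defined up to time $1$ on the preimage of $(X_0)_\smooth$ away from a lower-dimensional subset, so it induces a symplectomorphism from a dense open set $U\subset X$ onto $\big((X_0)_\smooth,\omega_0\big)$; composing with the Moser symplectomorphism above yields the $U$ of the statement.

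Finally, since $\pi$ is proper and $X$ is compact, every trajectory of $V$ issuing from $\pi^{-1}(1)$ converges, as $\Re\pi\downarrow 0$, to a point of $X_0$; sending each point to this limit extends the partial map above to a map $\phi\colon X\to X_0$, continuous by continuous dependence of flows on initial conditions together with compactness, whose image is compact and contains the dense set $(X_0)_\smooth$, hence equals $X_0$. The main obstacle is precisely this analytic step --- controlling the gradient-Hamiltonian flow as it degenerates onto the singular locus of $X_0$, so that it remains defined up to the special fibre on a dense open set and patches to a continuous surjection --- and this is exactly what we import from \cite[Theorem 3.25]{HK}. What remains for us is only the bookkeeping indicated above: checking that our data (finitely generated $S$, the chosen Hermitian metrics, the explicit equivariant embedding) fit the Harada--Kaveh framework, identifying $\mu_\Omega(X_0)$ with $\Delta$, and recording the reduction via Moser to the case $\Omega|_{\pi^{-1}(1)}=\omega_1$.
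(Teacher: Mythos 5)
Your proposal is correct and follows essentially the same route as the paper: both unwind the Harada--Kaveh construction (Anderson's toric degeneration plus the gradient-Hamiltonian flow), import the analytic content of \cite[Theorems 2.19 and 3.25]{HK}, and add the bookkeeping that the torus weights give $\mu_\Omega(X_0)=\Delta$ and that the dense open subset maps onto the smooth locus. The only caution is at that last point: the statement of \cite[Theorem 3.25]{HK} gives a symplectomorphism onto \emph{some} dense open subset of $X_0$, and the identification of that subset with $(X_0)_\smooth$ must be read off from the proof (the set $U_0$ defined before \cite[Lemma 2.9]{HK} together with \cite[Corollary 2.10]{HK}), which is precisely the extra step the paper makes explicit, so your citation should point to the proof rather than the stated theorem.
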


The proof of Theorem 3.25 given in \cite{HK} actually proves the above statement.
Unfortunately, this fact may not be immediately obvious to the casual reader,
and so we will sketch the proof.

\begin{proof} 
Anderson constructs a toric degeneration (in the sense of \cite[Definition 2]{HK}), i.e. a flat family $\pi \colon \mathfrak{X} \rightarrow \C$ with generic fiber $X$ and a special fiber $ X_0=\pi^{-1}(0)$, a (not necessarily normal) toric variety; see \cite[Corollary 5.2]{A} or \cite[Corollary 3.14]{HK}.

In \cite[Section 3.3]{HK},
Harada and Kaveh fix a positive multiple $\Omega$ of the Fubini-Study K\"ahler form
on  $\CP^N $, construct an embedding 
$\mathfrak X \hookrightarrow \CP^N \times \C$ inducing a symplectic structure $\tilde \omega$ on the smooth part of $\mathfrak{X}$ as a pull back of $\Omega \times \frac{i}{2} \operatorname{d}z \wedge \operatorname{d} \bar{z}$, and
show that these satisfy the assumptions (a)-(d) of \cite[Theorem 2.19]{HK}.
See \cite[Corollary 3.15, Remarks 3.17 and 3.19, and Proposition 3.24]{HK} for details.
In particular, assumption (b) implies that
there exists a linear action of $(\C^\times)^n$ on $\C^{N+1}$ so
that the induced embedding of $X_0 \hookrightarrow \CP^N \times \{0\}$
is equivariant.
The weights that occur with multiplicity at least one in this representation
are $S_d := \{ u \in \Z^n \mid (d,u) \in S \}$ for some natural number $d$; moreover,
$\conv (S_d )= d \Delta$;
(see \cite[Section 3.3.5]{HK}).
Moreover, $\Omega$ is chosen so that  $\mu_{\Omega} (\CP^N) = \Delta$,
where $\mu_{\Omega} \colon \CP^N \to \R^n$ is the moment map for $\Omega$.
Furthermore, \cite[Remark 3.17]{HK} implies that $X_0$ embeds in $\CP^N$
as the closure of the $(\C^*)^n$ orbit of $(1,\dots,1)$,
and so  $\mu_{\Omega} (X_0) = \Delta$.
This proves (1).

Furthermore, by part (1) of \cite[Theorem 2.19]{HK}, there exists a surjective continuous map $\phi \colon X \to X_0$ which is a symplectomorphism when restricted to a dense open subset $U \subset X$. 
Reading the proof of \cite[Theorem 2.19, part (1)]{HK}, 
we see that $\phi(U) = U_0$, the set defined just before \cite[Lemma 2.9]{HK}. Finally,
by \cite[Corollary 2.10]{HK}, $U_0$ is the smooth locus of $X_0$.  
This proves (2).

\end{proof}

\begin{remark}\label{topol after HK}
In the setting of Theorem~\ref{HK}, the facts that $X$ is Hausdorff,  $\phi$ is continuous,
and $\phi$ restricts to a homeomorphism from $U$ to $(X_0)_{\smooth}$,
together imply that $U =\phi^{-1}((X_0)_{\smooth}) \cap \overline{U}$.
Since $U$ is dense in $X$, this implies that $U=\phi^{-1}((X_0)_{\smooth})$.
\end{remark}

We will only need the following corollary.

\begin{corollary}\label{existence sympl}
Let $X$ be a smooth projective variety of complex dimension $n$,
equipped with a very ample Hermitian line bundle $\calL$,
and let $\omega$ be the associated K\"ahler form.
Let $S$ be the semigroup associated to a valuation
derived from a coordinate system on $X$.
Finally, let $(X_\Delta,\omega_\Delta)$ be a symplectic
toric manifold with integral  moment polytope  $\Delta$.
If 
$$S=\Cone (\{1\} \times \Delta)  \cap (\Z \times \Z^n),$$
then $(X,\omega)$ is symplectomorphic to $(X_\Delta,\omega_\Delta)$.
\end{corollary}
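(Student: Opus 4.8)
The plan is to run the hypothesis through Theorem~\ref{HK} and then recognize the resulting toric variety as $X_\Delta$ by means of Delzant's classification. First I would check that Theorem~\ref{HK} applies: since $\Delta$ is integral, $\Cone(\{1\}\times\Delta)$ is a rational polyhedral cone, so $S=\Cone(\{1\}\times\Delta)\cap(\Z\times\Z^n)$ is a finitely generated semigroup by Gordan's lemma. Theorem~\ref{HK} then produces the associated Okounkov body, the toric variety $X_0=\Proj\C[S]$ equivariantly embedded in some $\CP^N$, an $(S^1)^n$-invariant symplectic form $\Omega$ on $\CP^N$ whose moment map sends $X_0$ onto that Okounkov body, and a surjective continuous map $\phi\colon X\to X_0$ that restricts to a symplectomorphism from a dense open $U\subset X$ onto $(X_0)_\smooth$, equipped with the form induced by $\Omega$.

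Next I would identify the Okounkov body $\Delta(S)$ with $\Delta$: for $m>0$ the elements $(m,x)\in S$ are precisely the lattice points of $\Cone(\{1\}\times\Delta)$ at height $m$, i.e.\ those with $x/m\in\Delta$, and since $\Delta$ is rational the set $\bigcup_{m>0}\{x/m\mid (m,x)\in S\}$ is dense in $\Delta$; passing to closed convex hulls gives $\Delta(S)=\Delta$, hence $\mu_\Omega(X_0)=\Delta$. The crux — the step I expect to demand the most care — is to pin down $X_0$ itself. Because $S$ equals its own saturation $\Cone(\{1\}\times\Delta)\cap\Z^{n+1}$, the ring $\C[S]$ is normal, so $X_0$ is the normal projective toric variety associated to the lattice polytope $\Delta$; and since $(X_\Delta,\omega_\Delta)$ is a symplectic toric manifold, $\Delta$ is a smooth polytope, so its normal fan is unimodular and $X_0$ is a smooth variety. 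In particular $(X_0)_\smooth=X_0$. This is standard toric geometry, the only ingredient beyond the explicit description of $S$ being the smoothness of $\Delta$.

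Finally I would combine these facts. Since $(X_0)_\smooth=X_0$, Remark~\ref{topol after HK} gives $U=\phi^{-1}((X_0)_\smooth)=X$, so $\phi$ is itself a symplectomorphism from $(X,\omega)$ onto $X_0$ with the symplectic form $\Omega|_{X_0}$. That form is $(S^1)^n$-invariant with moment map image the full-dimensional polytope $\Delta$, so the torus action is effective and $(X_0,\Omega|_{X_0})$ is a symplectic toric manifold with moment polytope $\Delta$; by Delzant's theorem it is equivariantly symplectomorphic to $(X_\Delta,\omega_\Delta)$. Composing the two symplectomorphisms yields the desired symplectomorphism from $(X,\omega)$ to $(X_\Delta,\omega_\Delta)$.
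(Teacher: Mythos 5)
Your proposal is correct and follows essentially the same route as the paper: Gordan's lemma to get finite generation, Theorem~\ref{HK} plus Remark~\ref{topol after HK} to obtain a global symplectomorphism onto $(X_0,\Omega|_{X_0})$ (using that $X_0$ is smooth because $S$ is saturated and $\Delta$ is a smooth polytope), and then Delzant's theorem to identify $(X_0,\Omega|_{X_0})$ with $(X_\Delta,\omega_\Delta)$. Your spelled-out justification that $\Delta(S)=\Delta$ and that $\Proj\C[S]$ is the smooth toric variety of $\Delta$ merely fills in details the paper states without elaboration.
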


\begin{proof}
Because $S= \Cone(\{1\} \times \Delta) \cap (\Z\times \Z^n)$ is the set of integral points in a rational polyhedral cone,
 $S$ is finitely generated by Gordan's Lemma. 
Hence,  we can apply Theorem \ref{HK}. 
Moreover, since $\Delta$ is a smooth polytope, the toric variety $X_0 := \Proj \C[S]$ is smooth.
Therefore, since $\Delta$ is the Okounkov body associated to $S$, 
Theorem \ref{HK} and  Remark \ref{topol after HK} together
imply that  $(X,\omega)$ is symplectomorphic to $(X_0, \Omega|_{X_0})$, which is a
 symplectic toric manifold with moment polytope $\Delta$. The claim then follows from Delzant's theorem.
\end{proof}

%===================================================================
\subsection{Examples of toric degenerations}\label{examples}
%===================================================================

In this subsection, we give  a concrete description of the
semigroup associated to specific coordinate system  on smooth toric
varieties.
We then combine this description and the results above to give a criterion which guarantees that
certain symplectic toric manifolds are symplectomorphic -- even  though, in general, 
there is no affine transformation between their moment polytopes.
As an immediate consequence, we recover the known symplectomorphisms between
different Hirzebruch surfaces.

Fix a smooth full dimensional polytope  $P \subset \R^n$; 
let $X_P$ be the smooth toric variety associated to $P$,
and let $\omega_P$ be an invariant symplectic form with moment map $\mu_P$ 
so that $\mu_P(X_P) = P$.
Assume also  that  $P$ is {\bf integral},
that is, the vertices lie in $\Z^n$. 
Then there exists a  holomorphic line bundle $\mathcal{L}_P$ over 
 $X_P$ such that $c_1(\mathcal{L}_P) = [\omega_P$].
Moreover, $\calL_P$ is very ample;
see, for example, \cite[Propositions 2.4.4 and 6.1.4]{CLS}.
Finally,  the $(\C^\times)^n$  action on $X_P$ 
lifts to an action on $\calL_P$; the induced representation on $L_P := \hh^0(X_P, \calL_P)$
has one dimensional weight spaces with  weights $P \cap \Z^n$ (\cite{D}, see also \cite{H}).

Assume additionally that
$P$ is {\bf aligned} with the positive orthant $\R^n_{\geq 0}$, that is, 
$P$ is equal to $\R^n_{\geq 0}$ on some open neighborhood of $0$. (In particular, this implies that $P$ is full dimensional.)
Let $h \in L_P$ be a non-zero section in the zero weight space,
and let $g_i \in L_P$ be a non-zero section in the weight space corresponding
to $e_i$ for all $1\leq i \leq n$, where  $e_1,\dots,e_n$ is the standard  basis for $\R^n$.
We can then define $f_i \in \C(X_P)$ by $f_i = g_i/h$ for all $i$.
Using, for example,  the description of $L_P$ given in \cite{H}, it is easy to check  that
 $f_1,\dots,f_n$ defines a coordinate system on an open dense subset of $X_P$ containing
the point $\mu_P\inv(0)$.  Moreover, the monomials $\{f_1^{p_1} \dots f_n^{p_n}\}_{p \in P}$ form a basis for 
the subspace $\{ g/h \mid g \in L_P \} \subset \C(X_P)$. 

Finally, assume that the polytope  $P$ is {\bf normal}, that is,
for every  positive integer $m$ and every $p \in mP \cap \Z^n$ there exist $p_1, \ldots,p_m \in P \cap \Z^n$ such that 
$p=p_1+\dots+p_m$. 
Given an integer $m > 1$,  
let $L^m_P$ be the image of $L_P^{\otimes m}$ in $\hh^0(X_P, \calL_P^{\otimes m}).$ 
Then the monomial $f_1^{p_1} \dots f_n^{p_n}$ lies in the subspace $\{ {g}/{h^m} \mid g \in L_P^m \} \subset \C(X_P)$
exactly if $p$ can be expressed as a sum of $m$ integral points of $P$; moreover, these monomials
give a basis for that subspace.
In general (i.e. if the polytope $P$ is not necessarily normal), it is hard to describe this basis explicitly.
However, since  $P$ is normal, the monomials $\{f_1^{p_1} \dots f_n^{p_n}\}_{p \in mP}$ form a basis for 
the subspace $\{ g/h^m \mid g \in L^m_P \} \subset \C(X_P)$.

\begin{example}\labell{trivial}
Let $\nu$ be the valuation derived from the
coordinate system $f_1,\dots,f_p$.
In this case, $\{ \nu(g/h^m) \mid    g \in L_P^m \setminus \{0\}\}  =mP \cap \Z^n$, that is,
the semigroup $S$ associated to $\nu$ is  $\Cone(\{1\} \times P) \cap (\Z \times \Z^n)$.
\end{example}

As we see from the example above,  if we use the valuation derived from the coordinate system $f_1,\dots,f_n$, then
Proposition~\ref{existence sympl} only yields the trivial statement that $(X_P,\omega_P)$
is symplectomorphic to itself.
Therefore, we will modify this system of coordinates.
Fix an non-negative integer $c$ and integers $1 \leq k < \ell \leq n$. Consider  
the coordinate system $u_1,\dots,u_n$ near $p$, where
\begin{equation}\labell{system}
u_i = 
\begin{cases}
f_i &  i \neq k \\
f_k - f_\ell^c & i = k.
\end{cases}
\end{equation}

As we show below,
the semigroup  associated to the valuation derived from this
coordinate system  can be obtained by sliding the integral points of  $\Cone( \{1\} \times P)$
as far possible in
the direction $-e_k + c e_\ell$, while staying in $\Z \times \Z_{\geq 0}^n$.

\begin{definition}\labell{defineF}
Given a subset $Q$ of $\Z_{\geq 0}^n$ and a vector  $w \in \Z^n \setminus \Z^n_{\geq 0}$, 
let $\mathcal{F}_w(Q)$ 
be the set  formed by sliding the  
points of $Q$ as far as possible in the direction $w$, while staying in $\Z^n_{\geq 0}$.
More formally, 
let  $\mathcal{F}_w(Q)$ 
be the unique subset of $\Z^n_{\geq 0}$ such that
\begin{enumerate}
\item $Q \cap \af$ and $\mathcal{F}_w(Q) \cap \af$  have the same  cardinality
for every affine line $\af$ that is parallel to $w$.
\item If $p \in \mathcal{F}_w Q$ and $p + w \in \Z_{\geq 0}^n$, then 
$p + w \in \mathcal{F}_w(Q)$.
\end{enumerate}
\end{definition}

\begin{lemma}\label{sliding}
Consider the toric variety $X_P$ and  holomorphic line bundle $\calL_P$
associated to a smooth  normal integral  polytope $P \subset \R^n$ that's aligned with $\R^n_{\geq 0}$.
Fix a non-negative integer $c$ and integers $1 \leq k < \ell \leq n$;
let $\nu$ be the valuation derived from the local coordinate system
defined in \eqref{system}, and  let $S$ be the associated semigroup.
For each $m > 0$,
$$S_m = \F_{-e_k+c e_\ell}(m P \cap  \Z^n).$$
\end{lemma}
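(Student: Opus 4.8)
The plan is to fix $m > 0$ and analyze directly the valuation $\nu$ derived from the coordinate system \eqref{system} on the subspace $\{ g/h^m \mid g \in L^m_P \} \subset \C(X_P)$. By the properties of valuations recalled before Theorem \ref{HK}, $\nu$ restricted to this subspace (minus $0$) takes exactly $\dim L^m_P = |mP \cap \Z^n|$ distinct values, and these values are precisely $S_m$; also $|\F_{-e_k + c e_\ell}(mP \cap \Z^n)| = |mP \cap \Z^n|$ by Definition \ref{defineF}(1). So it suffices to prove the inclusion $S_m \subseteq \F_{-e_k+ce_\ell}(mP \cap \Z^n)$, since both sets have the same finite cardinality. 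Actually it is cleaner to argue both inclusions via an explicit change of basis, so let me set that up.

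First I would record that, since $P$ is normal, the monomials $\{ f_1^{p_1}\cdots f_n^{p_n} \}_{p \in mP \cap \Z^n}$ form a basis of the subspace $\{ g/h^m \mid g \in L^m_P\}$, as stated in the text preceding Example \ref{trivial}. Next, the substitution $f_k = u_k + f_\ell^c = u_k + u_\ell^c$ (and $f_i = u_i$ for $i \neq k$) is a triangular, invertible change of coordinates; expanding $f_k^{p_k} = (u_k + u_\ell^c)^{p_k}$ by the binomial theorem shows that each basis monomial $f^p = \prod_i f_i^{p_i}$ is an integer combination of the monomials $u^{q}$ with $q = p - j e_k + jc\, e_\ell$ for $0 \le j \le p_k$, i.e. of $u$-monomials whose exponent vectors are obtained from $p$ by sliding in the direction $-e_k + c e_\ell$ by an integer amount $j$ with $0 \le j \le p_k$ (so that $q_k = p_k - j \ge 0$ and $q_\ell = p_\ell + jc \ge 0$; the other coordinates are unchanged and stay $\ge 0$). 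Conversely, inverting gives $u^q$ as an integer combination of $f$-monomials $f^{p}$ with $p$ obtained from $q$ by sliding in the same direction; since $\nu(f^p) = \sum_i p_i\, \nu(f_i)$ and the $\nu(f_i)$ span a lattice, one gets $\nu(u^q)$ and the combinatorics of which exponent vectors appear.

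The core computation is then: for $g/h^m$ in our subspace, write it in the $u$-basis as $\sum_q c_q u^q$; then $\nu(g/h^m) = \min\{ q : c_q \ne 0 \}$ in lexicographic order. The claim $S_m = \F_{-e_k+ce_\ell}(mP\cap\Z^n)$ amounts to showing that the set of exponent vectors $q$ of $u$-monomials appearing (with the understanding that we may choose the element of the subspace freely, and take lex-minima) is exactly $\F_{-e_k+ce_\ell}(mP\cap\Z^n)$. Here I would verify the two defining properties of $\F$ from Definition \ref{defineF} for the set $S_m$: property (1), equal cardinality along each line parallel to $-e_k + c e_\ell$, follows because the change of basis is "upper triangular" with respect to sliding along such lines — within each coset of the line $\{ q + t(-e_k + ce_\ell)\}$, the $f$-monomials and the $u$-monomials span the same subspace, and $\nu$ is injective on a basis, so the number of attained values in that coset is the number of lattice points of $mP$ in it; property (2), stability under sliding by $-e_k + c e_\ell$ whenever the result stays in $\Z^n_{\ge 0}$, follows from the shape of the binomial expansion: if $q \in S_m$ is realized as a lex-minimal valuation and $q + (-e_k + ce_\ell) \in \Z^n_{\ge 0}$, then multiplying the corresponding section by $f_\ell^c/f_k$ — or more carefully, exhibiting an explicit element of the subspace whose $u$-expansion has lex-least term $q - e_k + c e_\ell$ — shows that vector is also in $S_m$. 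Uniqueness of the set satisfying (1) and (2) (asserted in Definition \ref{defineF}) then finishes the proof.

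The main obstacle I anticipate is property (2): one must produce, for a given realized valuation value $q \in S_m$ with $q - e_k + c e_\ell \in \Z^n_{\ge 0}$, an actual section in $L^m_P$ (equivalently, an element of $\{g/h^m\}$) whose $\nu$-value is $q - e_k + c e_\ell$, and this requires knowing that the relevant "slid" lattice point still corresponds to a monomial in the span — which is exactly where normality of $P$ (ensuring the monomial basis is indexed by all of $mP \cap \Z^n$) and the aligned-with-$\R^n_{\ge 0}$ hypothesis (ensuring $h, g_i$ exist and $0 \in P$) are used. Keeping careful track of the lexicographic order during the binomial expansion — making sure the "leading term" in $u$-coordinates is the one I claim, and that cancellations do not destroy it — is the delicate bookkeeping step; I would handle it by choosing the lex order so that sliding in direction $-e_k + c e_\ell$ strictly decreases (since $-e_k$ has a negative $k$-th entry and, for the lex order with coordinate $k$ sufficiently early, the $+ce_\ell$ part cannot compensate), which makes the slid monomial unambiguously the leading term.
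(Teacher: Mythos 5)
There is a genuine gap, and it sits exactly at the step you yourself flag as ``the main obstacle'': you never produce, for a given slid lattice point, an actual element of $\{g/h^m \mid g \in L^m_P\}$ whose $\nu$-value is that point, and without this the argument does not close. Your counting observations only pin down \emph{how many} values $\nu$ attains on each affine line parallel to $-e_k+ce_\ell$ (and even there, the claim that the $f$-monomials and the $u$-monomials ``span the same subspace'' on a coset is false as stated: the $f$-monomials indexed by the $N+1$ points of $mP\cap\Z^n$ on a line span an $(N+1)$-dimensional subspace of the span of the $u$-monomials on that line, whose exponents can have $k$-th coordinate anywhere from $0$ up to the maximal $p_k$; only the cardinality statement survives, via the dimension-counting property of $\nu$). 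Knowing there are $N+1$ attained values on a line does not tell you they are the $N+1$ lex-smallest (i.e.\ slid) points rather than some other subset of the exponents that occur, and your proposed route to property (2) of Definition~\ref{defineF} --- multiplying a section by $f_\ell^c/f_k$ --- fails, because the product corresponds to the exponent $p-e_k+ce_\ell$, which need not lie in $mP$, so it generally leaves the subspace $\{g/h^m \mid g\in L^m_P\}$.

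The missing ingredient is precisely the paper's explicit construction: writing the lattice points of $mP$ on a line as $p+t(e_k-ce_\ell)$, $t\in[0,N]\cap\Z$, one takes $f:=(f_k-f_\ell^c)^{t}\,f_\ell^{p_\ell-ct}\prod_{i\neq\ell}f_i^{p_i}$. Expanding in the $f$-monomials shows, using normality of $P$, that every exponent occurring lies in $mP\cap\Z^n$ on that line, so $f$ does lie in the subspace; rewriting $f$ in the $u$-coordinates and expanding $(u_k+u_\ell^c)^{p_k}$ shows $\nu(f)=p+(t-p_k)(e_k-ce_\ell)$, the point on the line with $k$-th coordinate $t$. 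Letting $t$ run over $[0,N]\cap\Z$ realizes all slid points, and then the cardinality argument you already have finishes the proof. (Alternatively, to complete your line-by-line linear-algebra framing one would have to show that the binomial matrix $\bigl(\binom{p_k}{q_k}\bigr)$ relating the two monomial families on a line has nonsingular block on the columns with $k$-th coordinate $0,\dots,N$; that, too, is a substantive step absent from your proposal.) So your framework is compatible with the paper's proof, but the central construction that makes the lemma true is missing.
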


\begin{proof}

Fix an affine line $\af$ parallel to $-e_k + c e_\ell$ such that $P\cap \Z^n \cap \af\neq \emptyset$.
Since $P$ is convex, $mP\cap \af$ is  an interval.
Hence,
\begin{equation*}%\labell{slide1}
mP\cap \Z^n\cap \af=\{p +t(e_k - c e_\ell)  \mid t \in [0,N]\cap \Z\},
\end{equation*}
for some $N \in \Z_{\geq 0}$ and $p=(p_1,\ldots, p_n) \in P\cap \Z^n$.
Fix  $t \in [0,N]\cap \Z$. 
On the one hand, the function  $$f:=(f_k-f_\ell^c)^{t}f_\ell^{p_\ell- c t} \prod_{i \neq \ell} f_i^{p_i}
= \sum_{j=0}^{t} (-1)^{t-j}\,\binom{t}{j}\,f_k^{p_k + j} \,f_\ell^{p_\ell-cj} \prod_{i \neq k,\ell} f_i^{p_i}$$
is of the form $g/h$ for some $g \in L^m_P$ because
$P$ is normal and the exponent of each monomial  in the final sum lies in $mP \cap \Z^n \cap \af$.
On the other hand, 
\begin{equation*}
f=u_k^{t}
u_\ell^{p_\ell -  ct} (u_k + u_\ell^c)^{p_k}
\prod_{i \neq k, \ell} u_i^{p_i}
= \sum_{j=0}^{p_k} \, \binom{p_k}{j} u_k^{t+j} u_\ell^{p_\ell - ct + c p_k  - c j}
\prod_{i \neq k, \ell} u_i^{p_i},
\end{equation*}
and so $\nu(f) = p + (t - p_k)(e_k - ce_\ell)$.
Since this is true for all $t \in [0,N] \cap \Z$,
\begin{equation*}%\labell{slide2}
\{p +(t - p_k) (e_k - c e_\ell) \mid t \in [0,N]\cap \Z\} \subseteq \{ \nu(g/h^m) \mid g \in L^m_P \setminus \{0\} \}.
\end{equation*}
Finally, as explained in the beginning of this section, 
the cardinality of 
the set $\{ \nu(g/h^m) \mid g \in L^m_P \setminus \{0\} \}$
is equal to the dimension of $L^m_P$, which
in turn is equal to the cardinality of the intersection  $mP \cap \Z^n$.
Hence, there are no additional points in the set $S_m =
\{ \nu(g/h^m) \mid g \in L^m_P \setminus \{0\} \}$.
The claim then follows immediately from  Definition \ref{defineF}.
\end{proof}

\begin{proposition}\label{nice cond give sympl}
Let $P$ and $\Delta$ be integral smooth polytopes
in $\R^n$ that are aligned with $\R^n_{\geq 0}$.
Let $(X_P, \omega_P)$ and $(X_\Delta,\omega_\Delta)$
be  the  symplectic toric manifolds associated  to $P$ and
$\Delta$, respectively.
Assume that there exists a non-negative integer $c$ and integers $1 \leq k < \ell \leq n$ such that
\begin{equation}\labell{key}
\mathcal{F}_{-e_k + c e_\ell} (mP \cap \Z^n) = m\Delta \cap \Z^n
\end{equation}
for all $m \in \Z_{>0}$.
Then $(X_P,\omega_P)$ is symplectomorphic to $(X_\Delta,\omega_\Delta)$.
\end{proposition}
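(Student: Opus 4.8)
The plan is to realize $(X_\Delta,\omega_\Delta)$ as the toric variety attached to the semigroup produced by the perturbed coordinate system \eqref{system} on $X_P$, and then to quote Corollary~\ref{existence sympl}. Since $P$ is a smooth integral polytope aligned with $\R^n_{\geq 0}$, the discussion preceding Lemma~\ref{sliding} applies verbatim: $X_P$ carries the very ample Hermitian line bundle $\calL_P$ with $c_1(\calL_P)=[\omega_P]$, the sections $h,g_1,\dots,g_n\in L_P$ yield the rational functions $f_i=g_i/h\in\C(X_P)$, and for the integer $c\geq 0$ and the indices $1\leq k<\ell\leq n$ furnished by the hypothesis we obtain the coordinate system $u_1,\dots,u_n$ of \eqref{system} near $p=\mu_P^{-1}(0)$. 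Let $\nu$ be the associated valuation and $S=\bigcup_{m\geq 0}\{m\}\times S_m$ the associated semigroup.

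Next I would feed this into Lemma~\ref{sliding}, which gives $S_m=\F_{-e_k+ce_\ell}(mP\cap\Z^n)$ for every $m>0$; combined with the hypothesis \eqref{key} this reads $S_m=m\Delta\cap\Z^n$ for all $m>0$, while $S_0=\{0\}$. The one remaining step before invoking Corollary~\ref{existence sympl} is the elementary identity
$$S=\Cone(\{1\}\times\Delta)\cap(\Z\times\Z^n).$$
This is a short convexity check: since $\Delta$ is convex, $\Cone(\{1\}\times\Delta)$ meets the slice $\{m\}\times\R^n$ in $\{m\}\times m\Delta$ for $m>0$ (and in $\{(0,0)\}$ for $m=0$), so a lattice point $(m,y)$ lies in $\Cone(\{1\}\times\Delta)\cap(\Z\times\Z^n)$ exactly when $y\in m\Delta\cap\Z^n=S_m$.

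With this identity established, the hypotheses of Corollary~\ref{existence sympl} are all in place: $X_P$ is a smooth projective variety with very ample Hermitian line bundle $\calL_P$ and associated K\"ahler form $\omega_P$; $\nu$ is a valuation coming from a coordinate system; $(X_\Delta,\omega_\Delta)$ is a symplectic toric manifold with integral (and smooth) moment polytope $\Delta$; and $S=\Cone(\{1\}\times\Delta)\cap(\Z\times\Z^n)$. Corollary~\ref{existence sympl} therefore produces a symplectomorphism $(X_P,\omega_P)\cong(X_\Delta,\omega_\Delta)$, which is the claim.

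I do not expect a genuine obstacle here: the substance is already packaged into Lemma~\ref{sliding} and Corollary~\ref{existence sympl}, and what remains is bookkeeping. The point deserving the most care is that Lemma~\ref{sliding} is stated for \emph{normal} integral smooth polytopes, since normality of $P$ is precisely what makes $\{f_1^{p_1}\cdots f_n^{p_n}\}_{p\in mP\cap\Z^n}$ a basis of $\{g/h^m\mid g\in L_P^m\}$ and hence drives the dimension count in that lemma; so one should either include normality of $P$ among the hypotheses of the proposition, or remark that the polytopes arising in the applications are normal. A minor secondary point is the degenerate case $c=0$, where $u_k=f_k-1$ fails to vanish at $p$; this is harmless (translate the coordinate, or restrict to $c\geq 1$ in the applications) and does not affect the argument.
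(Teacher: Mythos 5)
Your overall route is the paper's: build the coordinate system \eqref{system} from the given $c,k,\ell$, apply Lemma~\ref{sliding} together with \eqref{key} to identify the semigroup as $S=\Cone(\{1\}\times\Delta)\cap(\Z\times\Z^n)$, and conclude via Corollary~\ref{existence sympl}. The convexity check that $S_m=m\Delta\cap\Z^n$ for all $m>0$ forces this cone description is fine, and your remark about $c=0$ is a harmless side issue.

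The one real problem is how you handle normality. You correctly spot that Lemma~\ref{sliding} needs $P$ normal, but your resolution is to change the statement (add normality as a hypothesis) or to ``remark that the polytopes arising in the applications are normal.'' Neither proves the proposition as stated, and the fallback is not justified: normality of smooth polytopes is not automatic (in general it is an open problem), and the Bott polytopes $\Delta(A,\lambda)$ to which this proposition is applied in Lemma~\ref{l:keep} and Proposition~\ref{keep} are never shown to be normal in the paper. The paper closes this gap with a short reduction that you should supply: replace $P$ and $\Delta$ by $nP$ and $n\Delta$. The $n$-th dilate of any smooth $n$-dimensional lattice polytope is normal (\cite[Theorem 2.2.12]{CLS}), hypothesis \eqref{key} for $(P,\Delta)$ immediately gives \eqref{key} for $(nP,n\Delta)$ (it is the same statement at the multiples $mn$), and under the equivariant biholomorphism $X_{nP}\cong X_P$ one has $\mathcal{L}_{nP}\cong\mathcal{L}_P^{\otimes n}$ and $\omega_{nP}=n\omega_P$, so a symplectomorphism $(X_{nP},\omega_{nP})\cong(X_{n\Delta},\omega_{n\Delta})$ descends to one between $(X_P,\omega_P)$ and $(X_\Delta,\omega_\Delta)$. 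With that paragraph added at the start, your argument matches the paper's proof.
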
 

\begin{proof}
First observe that without loss of generality we can assume that $P$ is a normal polytope.
To see this, recall that, since $n \geq n-1$, the $n$-th dialate of  any smooth $n$-dimensional polytope  is normal; see, for example,
 \cite[Theorem 2.2.12]{CLS}.
Obviously the assumption \eqref{key} still holds if we replace $P$ by  $nP$ and $\Delta$ by $n\Delta$.
Moreover there is an equivariant biholomorphism from $X_P$ to $X_{nP}$ that
identifies  the equivariant holomorphic line bundle
$\mathcal{L}_{nP} \to X_{nP}$ 
with the tensor product $\mathcal{L}_P^{\otimes n} \to X_P$, and
identifies the symplectic form $\omega_{nP} \in\Omega^2(X_{nP})$
with the form $n \omega_P \in \Omega^2(X_P)$.
Therefore, if $(X_{nP},\omega_{nP})$ is symplectomorphic to $(X_{n\Delta},\omega_{n\Delta})$ then also $(X_P,\omega_P)$ is symplectomorphic to $(X_\Delta,\omega_\Delta)$.

Thus we assume that $P$ is normal. 
Use $c, \ell$ and $k$ to
define a coordinate system as in \eqref{system},
and hence  define a valuation $\nu$ and semigroup $S$.
By  Lemma~\ref{sliding} and \eqref{key} we have that
$$S=\Cone(\{1\} \times \Delta) \cap (\Z \times \Z^n).$$
The claim now follows immediately
from Proposition~\ref{existence sympl}.
\end{proof}

Unfortunately, even just the condition  that $\mathcal{F}_{-e_k + ce_\ell}(P \cap \Z^n)$  must
be the set of integral points in a smooth polytope is quite strong; 
it may not be the set of integral points of any convex polytope.
For example, consider the polytopes
\begin{gather*}
P=\{p \in \R^2 \mid
  0\leq \langle p,\ e_1 \rangle\leq 2,\ 0\leq \langle p, e_2 \rangle \leq 2\}, \\
P' =\{p \in \R^2 \mid
  0\leq \langle p,\ e_1 \rangle\leq 1,\ 0\leq \langle p, e_2 \rangle,\  \langle p, 4 e_1 + e_2 \rangle \leq 6\}.
\end{gather*}
In this case, $\conv(\F_{(-1,2)}(P \cap  \Z^2)) = P'$;
however, $\F_{(-1,2)}(P \cap \Z^2) \neq  P' \cap \Z^2$ because the former does not contain  the point $(1,1)$.
(This doesn't violate Lemma~\ref{trapezoid} below because it doesn't satisfy the correct inequalities.)
Luckily, there are some interesting  examples where the conditions of the above  proposition
are satisfied.

\begin{lemma}\labell{trapezoid}
Given integers $A_2^1,$  $\lambda_1,$ and $\lambda_2$,  such that
$\lambda_1 > 0$, $\lambda_2 > 0$, and $\lambda_2 > A_2^1 \lambda_1$,
consider the trapezoid
\begin{equation} \labell{P1}
P=\{p \in \R^2 \mid  0\leq \langle p, e_1 \rangle\leq \lambda_1,\ 0\leq \langle p, e_2 \rangle,\ \langle p,e_2 + A^1_2 e_1\rangle \leq \lambda_2\}.
\end{equation}
Given an integer $c$ such that  $ c \geq A_2^1$  and  $\lambda_2 > c \lambda_1$, 
$$ \F_{(-1,c)} (P \cap \Z^2)=\tilde P \cap \Z^2,$$
where
$\tA_2^1 = 2c - A_2^1,$ $\tlambda_1 = \lambda_1$, and $\tlambda_2 = 
\lambda_2 + (c - A_2^1) \lambda_1$, and
\begin{equation}\labell{P2}
\Tilde P=\{p \in \R^2 \mid  0\leq \langle p, e_1 \rangle\leq \tlambda_1,\ 0\leq \langle p, e_2 \rangle,\ \langle p,e_2 + \tA^1_2 e_1\rangle \leq \tlambda_2\}.
\end{equation}
\end{lemma}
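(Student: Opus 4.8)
The plan is to compute $\F_{(-1,c)}(P \cap \Z^2)$ line by line, exploiting the fact that $w = (-1,c)$ slides points in a direction that decreases the first coordinate. First I would slice both $P$ and $\tilde P$ by the vertical lines $\{\langle p, e_1\rangle = a\}$ for each integer $a$ with $0 \le a \le \lambda_1$; since sliding in direction $w$ preserves the fibers of the projection $p \mapsto \langle p, e_1\rangle$ only modulo the $e_1$-component, I instead slice by the affine lines $\af$ parallel to $w$, as in Definition \ref{defineF}. Each such line is $\af = \{(a - t,\, b + ct) \mid t \in \R\}$ for a base point; the set $P \cap \Z^2 \cap \af$ is an interval of integer points, and $\F_w$ replaces it by the interval of the same cardinality that is pushed maximally toward small first coordinate while staying in $\Z_{\ge 0}^2$. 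So the entire content of the lemma is the bookkeeping identity: for each line $\af$ parallel to $(-1,c)$, the number of integer points of $P$ on $\af$ equals the number of integer points of $\tilde P$ on the ``left-justified'' translate of that interval into the positive quadrant.

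The key steps, in order, are as follows. (i) Parametrize integer points of $P$ by $(a,b)$ with $0 \le a \le \lambda_1$, $b \ge 0$, and $b + A_2^1 a \le \lambda_2$; the height of the column over $a = a_0$ is $h(a_0) = \lambda_2 - A_2^1 a_0 + 1$ integer values of $b$ (using $\lambda_2 > A_2^1\lambda_1$ so the column is nonempty for every $0\le a_0\le\lambda_1$). (ii) Understand the line $\af$ through $(a_0, b_0)$ in direction $(-1,c)$: it meets the column $a = a_0 - 1$ at height $b_0 + c$, the column $a = a_0 - 2$ at height $b_0 + 2c$, and so on; since $c \ge A_2^1$, moving left by one column raises the ``ceiling'' $\lambda_2 - A_2^1 a$ by $A_2^1 \le c$, hence the point on $\af$ in the next column to the left is never below the floor and is above the ceiling at least as soon, so the interval $P \cap \Z^2 \cap \af$ is exactly the set of $t$ with the point in $\Z_{\ge 0}^2 \cap P$; one reads off that the leftmost column reached is $a = 0$ (because the floor constraint $b \ge 0$ combined with $c \ge A_2^1 \ge 0$ lets us always slide to $a = 0$) and the number of points on $\af$. (iii) Apply $\F_w$: the left-justified interval starts at the point on $\af$ with $a = 0$, i.e. at height $b_0 + c a_0$ with $0 \le b_0 + ca_0 \le$ (some bound), and has the same length. (iv) Check that the resulting set of points is exactly $\tilde P \cap \Z^2$ with $\tilde A_2^1 = 2c - A_2^1$, $\tilde\lambda_1 = \lambda_1$, $\tilde\lambda_2 = \lambda_2 + (c - A_2^1)\lambda_1$: verify the column heights match, i.e. $\tilde\lambda_2 - \tilde A_2^1 a + 1$ equals the computed number of slid points in column $a$, and verify the floor $b \ge 0$ is still the binding lower constraint. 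Along the way one uses $\lambda_2 > c\lambda_1$ to guarantee that $\tilde P$ is again a genuine trapezoid of the same combinatorial type (in particular $\tilde\lambda_2 > \tilde A_2^1 \tilde\lambda_1$), and one should note $\tilde\lambda_2 > \tilde c \tilde\lambda_1$ fails in general — but the lemma only asserts the single slide.

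I expect the main obstacle to be step (ii)–(iii): proving that after sliding, the upper boundary is still affine, i.e. that the counts $\#(\F_w(P\cap\Z^2)\cap\{a = a_0\})$ depend linearly on $a_0$ with slope $-\tilde A_2^1$. The subtlety is that a given output column $a = a_0$ receives contributions from several input lines $\af$, and one must confirm these contributions stack up without gaps into a single interval starting at $b = 0$ and with the asserted top — this is where the hypotheses $c \ge A_2^1$ (so lines don't ``escape'' out the top prematurely when slid left) and $\lambda_2 > c\lambda_1$ (so every line actually reaches the $a = 0$ wall) get used, and where the counterexample with $P, P'$ in the preceding paragraph shows the statement is false without them. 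A clean way to organize this is to not track columns at all but to directly verify the two defining properties of $\F_w$ in Definition \ref{defineF} for the candidate set $\tilde P \cap \Z^2$: property (2) (closure under $+w$ within $\Z_{\ge0}^2$) is immediate from the shape of $\tilde P$ once one checks $\tilde P$ is ``$w$-closed'' below its boundary, and property (1) (equicardinality on each line parallel to $w$) reduces to the single linear count $\#(P \cap \Z^2 \cap \af) = \#(\tilde P \cap \Z^2 \cap \af)$, which is a one-variable computation. Uniqueness in Definition \ref{defineF} then finishes it.
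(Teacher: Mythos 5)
Your closing reorganization --- verify the two defining properties of $\F_w$ in Definition~\ref{defineF} for the candidate set $\tilde P \cap \Z^2$ and invoke uniqueness --- is exactly the skeleton of the paper's proof, so at the top level the approaches agree; the difference lies in how property (1), the equicardinality on each line $\af$ parallel to $w=(-1,c)$, is established. The paper exploits the fact that $\langle \cdot\,, e_2+ce_1\rangle$ is constant along such lines and splits the plane into the half-planes $\{\langle p, e_2+ce_1\rangle \le \lambda_2\}$ and $\{\langle p, e_2+ce_1\rangle \ge \lambda_2\}$: on the first, $P$ and $\tilde P$ cut out the \emph{same} trapezoid $Q$, so there is nothing to count, and on the second, the affine map $B(p_1,p_2)=(\lambda_1-p_1,\ 2cp_1+p_2-c\lambda_1)$ preserves $\Z^2$, fixes every line parallel to $w$, and carries $P\cap\{\langle p, e_2+ce_1\rangle\ge\lambda_2\}$ onto $\tilde P\cap\{\langle p, e_2+ce_1\rangle\ge\lambda_2\}$, so the counts match with no floor/ceiling bookkeeping. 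Your route instead does the one-variable lattice count on each line directly; this does go through (on the upper side both counts come down to comparing the number of integers in $[\alpha,\lambda_1]$ with the number in $[0,\lambda_1-\alpha]$ for $\lambda_1\in\Z$, which is in effect the paper's map $B$ in disguise), but in your write-up that count is only asserted, and it is the entire content of the lemma, so it would have to be carried out, including the degenerate case $c=A_2^1$, where $P=\tilde P$.

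Two caveats. First, in steps (ii)--(iii) you assume $c\ge A_2^1\ge 0$ and that every line can be slid until it hits the wall $\langle p,e_1\rangle=0$; the lemma assumes neither $A_2^1\ge 0$ nor $c\ge 0$ (only $c\ge A_2^1$ and $\lambda_2>c\lambda_1$), and when $c<0$ the slide can be stopped by the floor $\langle p,e_2\rangle\ge 0$ before reaching $\langle p,e_1\rangle=0$, so the ``left-justified at the wall'' picture is wrong in general. This does not sink your final organization, since properties (1) and (2) make no reference to where the slid interval starts, but the per-line count then has sign cases your sketch does not acknowledge, whereas the paper's half-plane-plus-affine-map argument is uniform in the signs. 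Second, property (2) for $\tilde P\cap\Z^2$ is not quite ``immediate from the shape'': it rests precisely on the inequality $\tA_2^1\ge c$ (equivalent to $c\ge A_2^1$), which controls the slanted facet under translation by $w$, and this should be said explicitly, as the paper does.
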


\begin{proof}
The polytope $\Tilde P \cap \Z^2$ satisfies condition (2) of Definition~\ref{defineF}
because $\tA_2^1 \geq c$.
Moreover, if  $\tA_2^1 = A_2^1$ then $P = \Tilde P$, and so condition (1) is obvious.
Thus, we may 
assume that $\tA_2^1 > c > A_2^1$ and
focus on proving condition (1).

First, consider the half-plane  $\hh^\leq := \{ p \in \R^2 \mid \langle p, e_2 + c e_1 \rangle  \leq   \lambda_2 \}$. 
Since $c > A_2^1$, every $p \in H^\leq$ with $\langle p, e_1 \rangle \geq 0$
satisfies 
$\langle p, e_2 + A_2^1 e_1 \rangle \leq \lambda_2$; moreover,
every $p \in \hh^\leq$ with $\langle p, e_1 \rangle \leq \lambda_1$
satisfies
$\langle p, e_2 + \tA_2^1 e_1 \rangle \leq \tlambda_2$.
Therefore, the intersections  $P \cap \hh^\leq$ and $\Tilde P \cap \hh^\leq$
are both equal to the  trapezoid $Q$ with vertices
$(0,0)$, $(\lambda_1,0)$, $(\lambda_1, \lambda_2 - c \lambda_1)$, and $(0,\lambda_2)$.
Hence, condition (1)  holds for lines in this half plane.

Next,
consider the half-plane $\hh^\geq := \{ p \in \R^2 \mid \langle p, e_2 + c e_1 \rangle \geq \lambda_2 \}$.
Since $c > A_2^1$, every $p \in \hh^\geq$ with 
$\langle p, e_1 + A_2^1  e_2 \rangle \leq \lambda_2$ satisfies $\langle p, e_1 \rangle \geq 0$;
moreover,  every $p \in \hh^\geq$ with $\langle p, e_1 + \tA_2^1 e_2 \rangle \leq \tlambda_2$ satsfies $\langle p, e_1 \rangle \leq \lambda_1$.
Additionally, since $\lambda_2 > 0$ and $\lambda_2 > c \lambda_1$,
every $p \in  \hh^\geq$
with  $0 \leq \langle p, e_1 \rangle \leq \lambda_1$ satisfies $\langle p, e_2 \rangle \geq 0$.
Since $c >  A_2^1$ and $\lambda_1 > 0$, this implies that the
intersection $P \cap \hh^\geq$  is the  triangle $T$ with vertices $(0,\lambda_2)$,
$(\lambda_1, \lambda_2-c \lambda_1)$,
and $(\lambda_1, \lambda_2 - A_2^1 \lambda_1)$,
and the intersection  $\Tilde P \cap \hh^\geq$ is the triangle $\tilde T$ with vertices $(0,\lambda_2)$,
$(\lambda_1, \lambda_2-c \lambda_1)$,
and $(0, \lambda_2 + c \lambda_1 - A_2^1 \lambda_1)$.
Define an affine transformation $B \colon \R^2 \to \R^2$  by
$$B(p_1,p_2) = ( \lambda_1 - p_1,  2c p_1 + p_2  - c \lambda_1).$$
By inspection, $B(T) = \Tilde T$.
Additionally, since $B$ is the composition of a  unimodular integral linear transformation
with  translation by an integral vector, 
$B(\Z^2) = \Z^2$.
Finally, $B(\af) = \af$ for every affine line $\af$ parallel to  $- e_1 + c e_2$.
Hence, $B$ induces a bijection from 
$P \cap \Z^2 \cap \af$ to $\Tilde P \cap \Z^2 \cap \af$ for every such $\af$ in $\hh^\geq$,
and so condition (1) also holds for lines in this half plane.
\end{proof}

\begin{example}\labell{Hirzebruch}
Given an integer $m$, let  $\calO(m) \to \CP^1$ be the holomorphic line bundle
$\calO(m) := \big(\C^2 \setminus \{0\} \times \C\big)/\C^\times$, where $\C^\times$ acts
by $\alpha \cdot (x_1,x_2,z) = (\alpha  x_1, \alpha x_2, \alpha^m z)$.
Given an  non-negative integer $m$, consider the Hirzebruch surface
$$\Sigma_{m} := \P(\calO(0) \oplus \calO(-m)) \simeq \big(\C^2 \setminus \{0\}\big)^2/(\C^\times)^2,$$
where  $(\alpha_1,\alpha_2) \cdot (x_1,x_2, y_1,y_2) =
(\alpha_1 x_1, \alpha_1 x_2,   \alpha_2 y_1, \alpha_1^{-m} \alpha_2 y_2)$.
Hirzebruch surfaces are the simplest non-trivial Bott manifolds; 
the torus  $(\C^\times)^2$ acts on $\Sigma_{m}$  by
$(\alpha_1, \alpha_2) = [\alpha_2 x_1, x_2, \alpha_1 y_1, y_2]$.
Given integers $A_2^1 \geq 0$, $\tA_2^1 \geq 0$,
$\lambda_1 >0$, $\tlambda_1$,  $\lambda_2 > A_2^1 \lambda_1 \geq 0$, and $\tlambda_2 > \tA_2^1 \tlambda_1 \geq 0$,
there exist $(S^1)^2 \subset (\C^\times)^2$ invariant integral symplectic forms $\omega  \in \Omega^2(\Sigma_{A_2^1})$
and $\tomega \in \Omega^1(\Sigma_{\tA_2^1})$, and moment maps $\mu \colon \Sigma_{A_2^1} \to \R^n$ and 
$\Tilde \mu \colon \Sigma_{A_2^1} \to \R^n$, so  that $\mu(\Sigma_{A_2^1}) = P$ 
and  ${\Tilde \mu}(\Sigma_{A_2^1}) = \Tilde P$, where $P$ and $\Tilde P$ are the polytopes described in
\eqref{P1} and \eqref{P2}.
Then  $(\Sigma_{A_2^1},\omega)$ and $(\Sigma_{\tA_2^1}, \tomega)$ are symplectomorphic exactly
if $A_2^1$ and $\tA_2^1$ have the same parity, $\tlambda_1 = \lambda_1$, and $\lambda_2 - \frac{1}{2} A_2^1 \lambda_1 = \tlambda_2 - \frac{1}{2} \tA_2^1 \tlambda$.
When these equations are satisfied, Proposition~\ref{nice cond give sympl} and
Lemma~\ref{trapezoid} together  immediately give another proof that these manifolds
are symplectomorphic.
\end{example}

We will use the following easy consequence of the previous lemma. 
\begin{lemma} \label{cut trapezoid}
Given $A_2^1, \lambda_1, \lambda_2  \in \Z$ and 
$\lambda', \lambda'' \in \Q \cup \{ \pm \infty\}$,
let $Q$ be the polytope consisting of $p \in \R^2$ such that
\begin{multline*}%\label{before}
 0\leq \langle p, e_1 \rangle\leq \lambda_1,\ \  0\leq \langle p, e_2 \rangle,\ \ \langle p,e_2 +  A^1_2 e_1\rangle \leq \lambda_2, \\
\lambda'\leq \langle p, e_2 + c e_1  \rangle \leq \lambda''.
\end{multline*}
Given a non-negative integer $c$  such that $\lambda_2 > c \lambda_1,$
let  $\Tilde Q$ be the polytope consisting of $p \in  \R^2$ such that
\begin{multline*}%\labell{after}
 0\leq \langle p, e_1 \rangle\leq \tlambda_1,\ 0\leq \langle p,e_2\rangle,\  \langle p,e_2 + \tA^1_2 e_1\rangle \leq \tlambda_2, \\
\lambda'\leq  \langle p, e_2 +  c e_1  \rangle \leq \lambda'', 
\end{multline*}
where $\tA_2^1 = 2c - A_2^1,$ $\tlambda_1 = \lambda_1$, and $\tlambda_2 = 
\lambda_2 + (c - A_2^1) \lambda_1$.
If $\tA_2^1 \geq A_2^1$, then $$\F_{(-1,c)} (Q \cap \Z^2)=\Tilde Q \cap \Z^2.$$
\end{lemma}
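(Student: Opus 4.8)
The plan is to reduce Lemma~\ref{cut trapezoid} to Lemma~\ref{trapezoid} by observing that the extra constraints $\lambda' \leq \langle p, e_2 + c e_1 \rangle \leq \lambda''$ cut both $P$ and $\Tilde P$ along affine lines that are \emph{parallel} to the sliding direction $-e_1 + c e_2$. Indeed, $\langle -e_1 + c e_2, e_2 + c e_1 \rangle = -c + c = 0$, so the function $p \mapsto \langle p, e_2 + c e_1 \rangle$ is constant along every affine line $\af$ parallel to the sliding direction; thus $Q$ is obtained from $P$ (the trapezoid of \eqref{P1}) by deleting precisely those lines $\af$ on which the value of this function falls outside $[\lambda', \lambda'']$, and likewise $\Tilde Q$ is obtained from $\Tilde P$ by deleting exactly the same set of lines.

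First I would set up the notation, letting $P$ and $\Tilde P$ denote the trapezoids of \eqref{P1} and \eqref{P2}, which are exactly the polytopes appearing in Lemma~\ref{trapezoid}; the hypotheses $A_2^1, \lambda_1, \lambda_2 \in \Z$, $c$ a non-negative integer with $\lambda_2 > c\lambda_1$, and $\tA_2^1 \geq A_2^1$ match those of that lemma once one notes that the remaining sign conditions ($\lambda_1 > 0$, $\lambda_2 > 0$, $\lambda_2 > A_2^1\lambda_1$, $c \geq A_2^1$) either follow from $\lambda_2 > c\lambda_1 \geq 0$ and $\tA_2^1 \geq A_2^1$, or else make $P$ empty, in which case the claim is trivial. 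By Lemma~\ref{trapezoid}, $\F_{(-1,c)}(mP \cap \Z^2) = \Tilde P \cap \Z^2$ for the relevant case; more to the point, since $P = \bigcup_\af (P \cap \af)$ over lines $\af$ parallel to $(-1,c)$, the proof of Lemma~\ref{trapezoid} in fact establishes a bijection $P \cap \Z^2 \cap \af \to \Tilde P \cap \Z^2 \cap \af$ for \emph{each} such line $\af$ (this is exactly what conditions (1) and (2) of Definition~\ref{defineF} encode).

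The key step is then to verify that this line-by-line correspondence restricts correctly. Fix an affine line $\af$ parallel to $(-1,c)$; on $\af$ the function $p \mapsto \langle p, e_2 + ce_1 \rangle$ takes a single constant value, say $v(\af)$. Then $Q \cap \af = P \cap \af$ if $v(\af) \in [\lambda', \lambda'']$ and $Q \cap \af = \emptyset$ otherwise, and identically $\Tilde Q \cap \af = \Tilde P \cap \af$ or $\emptyset$ according to the same dichotomy. Hence $Q \cap \Z^2 \cap \af$ and $P \cap \Z^2 \cap \af$ have the same cardinality whenever $v(\af) \in [\lambda',\lambda'']$ and are both empty otherwise, with the analogous statement for $\Tilde Q$; so $Q \cap \Z^2 \cap \af$ and $\Tilde Q \cap \Z^2 \cap \af$ have the same cardinality for every $\af$, giving condition (1) of Definition~\ref{defineF} for $\F_{(-1,c)}(Q\cap \Z^2)$ and the set $\Tilde Q \cap \Z^2$. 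For condition (2), suppose $p \in \Tilde Q \cap \Z^2$ and $p + (-1,c) \in \Z^2_{\geq 0}$; since $\tA_2^1 \geq c$ (using $\tA_2^1 = 2c - A_2^1 \geq c$, as $c \geq A_2^1$), the argument in Lemma~\ref{trapezoid} shows $p + (-1,c) \in \Tilde P$, and because $(-1,c)$ is parallel to $\af$ the value of $\langle \cdot, e_2 + ce_1\rangle$ is unchanged, so $p+(-1,c) \in \Tilde Q$ as well. By the uniqueness clause in Definition~\ref{defineF} we conclude $\F_{(-1,c)}(Q \cap \Z^2) = \Tilde Q \cap \Z^2$.

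The only genuine obstacle is bookkeeping: one must be careful that the degenerate cases ($P$ empty, $\lambda'$ or $\lambda''$ infinite, $\tA_2^1 = A_2^1$) are handled, and that invoking Lemma~\ref{trapezoid} for a single dilate $m = 1$ rather than all $m$ is legitimate here — it is, because Lemma~\ref{cut trapezoid} as stated concerns only $Q$ and $\Tilde Q$ themselves, not their dilates. I expect the whole argument to be short once the orthogonality $\langle -e_1 + ce_2,\, e_2 + ce_1\rangle = 0$ is highlighted, since that single observation is what makes the slicing compatible with the sliding operation.
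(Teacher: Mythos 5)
Your proposal is correct and takes essentially the same route as the paper: reduce to Lemma~\ref{trapezoid} and use $\langle -e_1 + c e_2,\, e_2 + c e_1 \rangle = 0$ so that the extra constraints $\lambda' \leq \langle p, e_2 + c e_1 \rangle \leq \lambda''$ cut both trapezoids only along whole lines parallel to the slide direction, which is exactly how the paper concludes. The one small wrinkle is your degenerate-case dichotomy, which misses $\lambda_1 = 0$ (there $P$ is nonempty yet the hypothesis $\lambda_1 > 0$ of Lemma~\ref{trapezoid} fails); the paper disposes of this in one line, since then $Q = \Tilde Q$ lies in the line $\langle p, e_1 \rangle = 0$ and no point can move in the direction $(-1,c)$, so $\F_{(-1,c)}(Q \cap \Z^2) = Q \cap \Z^2 = \Tilde Q \cap \Z^2$.
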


\begin{proof}
Assume that $\tA_2^1 \geq A_2^1$, and so $c  \geq A_2^1$.
If $\lambda_1 < 0$, the claim is obvious because $Q$ and $\Tilde Q$ are both empty. 
Similarly, if $\lambda_1 = 0$, then $Q = \Tilde Q \subset \R \times \{0\}$.
Hence, we may assume that $\lambda_1 > 0$.
Therefore,  since $c \geq 0$ and $c \geq A_2^1$, 
the assumption that $\lambda_2 > c \lambda_1$ implies
that $\lambda_2 > A_2^1 \lambda_1$ and $\lambda_2 > 0$ as well.
Therefore, 
by Lemma~\ref{trapezoid}
 $$\F_{(-1,c)} (P \cap \Z^2)=\tilde P \cap \Z^2,$$
where  $P$ and $\Tilde P$ are the polytopes defined in that lemma.
Moreover,
\begin{gather*}
Q = P \cap  \{p \in \R^2 \mid \lambda'\leq \langle p, e_2 + c e_1  \rangle \leq \lambda''\}, 
\\
\Tilde Q =  P' \cap  \{p \in \R^2 \mid \lambda'\leq \langle p, e_2 + c e_1  \rangle \leq \lambda''\}. 
\end{gather*}
Therefore, the claim follows immediately from the fact that $\langle -e _1 + c e_2, e_2 + c e_1 \rangle = 0.$
\end{proof}

%===================================================================
\section{Symplectic Bott manifolds}\label{sec Bott manifolds}\labell{s:bm}

%===================================================================
In this section, we formally define symplectic Bott manifolds and study some elementary properties of these manifolds.
In particular, we describe the cohomology ring of Bott manifolds and
determine which cohomology classes admit invariant K\"ahler forms.
We also use Delzant's theorem to construct symplectomorphisms between certain symplectic Bott manifolds.

To simplify notation,  let $\n$ denote the set $\{1,\ldots, n\}$  for any $n \in \Z_{>0}$,
and let $\mat$ denote the set of $n \times n$ integral matrices.
Let $A \in \mat$ be a strictly upper-triangular matrix,
that is,  let $A^i_j$ be an integer for all $i,j \in \n$
such that  $A^i_j = 0$  if $i \geq j$.
Fix $\lambda \in \R^n$,  
and define a polytope $\Delta=\Delta(A, \lambda)$ as follows:
$$ \textstyle \Delta = \big\{ p \in \R^n \ \big| \ \langle p, e_j\rangle \geq 0 \mbox{ and }
\big\langle p, e_j + \sum_i A^i_j e_i \big\rangle \leq \lambda_j \ \forall \ 1 \leq j  \leq n\big\}.$$
Given subsets $J, K \subseteq \n$, define (possibly empty) faces
$$
\hat{F}_J =  \bigcap_{j \in J} \{\langle p, e_j \rangle = 0 \} \cap \Delta
\mbox{ and }
 F_K = \bigcap_{k \in K} \big\{  
\big\langle p, e_k + \sum_i A^i_k e_i \big\rangle =  \lambda_k  \big\} \cap \Delta.
$$
If $A$ is the zero matrix and $\lambda = (1,\dots,1)$, then
$\Delta$ is the hypercube $[0,1]^n$, 
and the face $F_J \cap \hat F_K$ of $[0,1]^n$ is non-empty exactly if
$J \cap K = \emptyset$.
More generally,  we say that $\Delta = \Delta(A,\lambda)$ is a {\bf Bott polytope} if
\begin{equation*}%\labell{sbm}
F_J \cap \hat{F}_K \neq \emptyset \iff
J \cap K = \emptyset \quad
 \forall \ J,K \subseteq \n.
\end{equation*}
In this case,  each vertex of $\Delta$ lies  on exactly $n$ defining hyperplanes;
moreover, the primitive integral vectors perpendicular to these hyperplanes form a
$\Z$ basis for $\Z^n$.
Therefore, $\Delta$ is an $n$-dimensional smooth polytope, and $F_i$ and $\Hat F_i$
are facets for each $i$.
(In particular, $\Delta$ is combinatorially equivalent to the hypercube $[0,1]^n$.)

Fix a strictly upper triangular matrix $A \in \mat$.
For each $j \in \n$,
let $\Hat v_j = e_j$ and $ v_j = -e_j - \sum_i A_j^i e_i$.
Define 
$\pi \colon \R^{2n} \to \R^n$  by
$$\pi(\Hat \alpha_1,\alpha_1 ,\dots,\Hat \alpha_n, \alpha_n) = \sum_j \Hat \alpha_j \Hat v_j + \alpha_j  v_j.$$
Let $\iota \colon \fk \to \R^{2n}$ be the natural inclusion the kernel of $\pi$, 
let $K \subset (S^1)^{2n}$ be the subtorus with Lie algebra $\fk$, and let $K_\C \subset ((\C^\times)^2) \simeq (\C^\times)^{2n}$ be the complexification of $K$.
Then the quotient
$$ M_A := (\C^2 \setminus \{0\})^n/K_\C$$  is a Bott manifold;
the $(\C^\times)^n$ action on $M_A$ is  induced by the
inclusion $(\C^\times)^n \simeq (\C^\times \times \{1\})^n \hookrightarrow (\C^\times)^{2n}$.
We call $M_A$ the {\bf Bott manifold} associated to $A$.
Conversely, if $M$ is any Bott manifold then -- for the appropriate identification of the
torus acting on $M$ with $(\C^\times)^N$ -- there exists a strictly upper triangular matrix
$A \in \mat$ such that  $M$ is equivariantly biholomorphic to $M_A$.

Now fix $\lambda \in \R^n$ such that $\Delta = \Delta(A,\lambda)$ is a Bott polytope.
In this case,  $\Hat v_j$ and $v_j$ are  the
inward primitive integral vectors perpendicular to the facets $\Hat F_j$ and $F_j$, respectively.
The standard $(S^1)^{2n}$ action on $\C^{2n}$ is Hamiltonian with moment map $$\textstyle(\Hat z_1,z_1,\dots,\Hat z_n, z_n) \mapsto
\left(\frac{1}{2} |\Hat z_1|^2, \frac{1}{2} |z_1|^2, \dots, \frac{1}{2} |\Hat z_n|^2, \frac{1}{2} |z_n|^2\right).$$
Let $(M_\Delta,\omega_\Delta)$ be the  symplectic reduction of $\C^{2n}$ by $K\subset (S^1)^{2n}$
at $\iota^*(0,\lambda_1, \dots,  0, \lambda_n) \in \fk^*$, where $\iota^* \colon \R^{2n} \to \fk^*$ is the dual map.
Then the  inclusion $(S^1)^n \simeq (S^1 \times \{1\}) \hookrightarrow  (S^1)^{2n}$ induces
a Hamiltonian $(S^1)^n$ action  action on $(M_\Delta,\omega_\Delta)$  with moment  polytope $\Delta$.
Moreover, since $\Delta$ is a Bott polytope,
the manifold $M_\Delta$ is equivariantly diffeomorphic to the Bott manifold $M_A$ defined above,
Moreover, $\omega_\Delta$ is a K\"ahler form with respect to this complex structure.
(See, e.g., \cite{H}.)
Conversely, if $\omega$ is any $(S^1)^{n} \subset (\C^\times)^n$ invariant K\"ahler form on $M_A$, then the $(S^1)^{n}$ action on $M_A$ is Hamiltonian with moment image
$\Delta(A,\lambda)$ for some $\lambda \in \R^n$.
This justifies the following definition.

\begin{definition}
A {\bf symplectic Bott manifold} is the symplectic toric manifold $(M,\omega,\mu)$ associated
to a Bott polytope $\Delta(A,\lambda)$  via the above construction. 
In this case we say that $(A, \lambda)$ {\bf defines}  $(M,\omega, \mu)$.
\end{definition}

We need the following  special case of a classical result of Danilov \cite{D}.
\begin{lemma}\label{cohomology}
Let $(M,\omega,\mu)$ be the symplectic Bott manifold associated to a strictly upper triangular
matrix $A \in \mat$ and $\lambda \in \R^n$. Then
\begin{equation*}
\textstyle
\hh^*(M;\Z) \simeq \Z[x_1,\dots,x_n]/\big(x_i^2 + \sum_j A^i_j x_i x_j \big).
\end{equation*}
Moreover, $x_i \in \hh^2(M;\Z)$ is  Poincar\'e  dual to the moment preimage $\mu\inv(F_{i})$, $x_k + \sum_i A_i^i x_i$ is Poincar\'e dual to  $\mu\inv({\Hat F})$, and $[\omega]=\sum_i \lambda_i x_i$. 
%In particular, $\dim \hh^2(M;\Z) =n.$
\end{lemma}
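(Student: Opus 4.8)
The plan is to deduce everything from the Danilov--Jurkiewicz description of the integral cohomology ring of a smooth complete toric variety, applied to the toric variety underlying $M$, whose fan is the normal fan of the Bott polytope $\Delta=\Delta(A,\lambda)$. As recalled just before the lemma (and established in \cite{H}), this fan has exactly $2n$ rays, generated by the inward primitive normals $\hat v_j=e_j$ and $v_j=-e_j-\sum_k A^k_j e_k$ to the facets $\hat F_j$ and $F_j$; write $\hat y_j,y_j\in\hh^2(M;\Z)$ for the Poincar\'e duals of the toric divisors $\mu\inv(\hat F_j)$ and $\mu\inv(F_j)$. Danilov's theorem (see e.g.\ \cite[Theorem 12.4.4]{CLS}) then gives
$$\hh^*(M;\Z)\;\cong\;\Z[\hat y_1,y_1,\dots,\hat y_n,y_n]/(I_{SR}+J),$$
where $I_{SR}$ is the Stanley--Reisner ideal of the fan and $J$ is generated by the linear forms $\sum_u\langle m,u\rangle\,y_u$ with $m$ ranging over a basis of $(\Z^n)^*$. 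The integral version applies since $M$ is a smooth complete (in fact projective) toric variety.

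Next I would identify the two ideals explicitly. Because $\Delta$ is a Bott polytope, the condition $F_J\cap\hat F_K\neq\emptyset\iff J\cap K=\emptyset$ shows that a set of rays spans a cone of the fan unless it contains both $\hat v_j$ and $v_j$ for some $j$; hence the minimal non-faces are exactly the pairs $\{\hat v_j,v_j\}$ and $I_{SR}=(\hat y_1 y_1,\dots,\hat y_n y_n)$. Feeding the dual basis vector $m=e_i^*$ into the linear relations, and using $\langle e_i^*,\hat v_j\rangle=\delta_{ij}$ and $\langle e_i^*,v_j\rangle=-\delta_{ij}-A^i_j$, produces the $n$ relations $\hat y_i=y_i+\sum_j A^i_j y_j$, which is precisely enough to eliminate $\hat y_1,\dots,\hat y_n$. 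Substituting into the Stanley--Reisner relations gives $\hat y_i y_i=\big(y_i+\sum_j A^i_j y_j\big)y_i$, so setting $x_i:=y_i$ yields $\hh^*(M;\Z)\cong\Z[x_1,\dots,x_n]/\big(x_i^2+\sum_j A^i_j x_i x_j\big)$, with $x_i=\mathrm{PD}[\mu\inv(F_i)]$ and $x_k+\sum_i A^i_k x_i=\hat y_k=\mathrm{PD}[\mu\inv(\hat F_k)]$, as claimed.

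It remains to pin down $[\omega]$. For this I would invoke the standard formula expressing the class of the reduced symplectic form in terms of the support numbers of the moment polytope (see \cite{H}, or Guillemin): writing $\Delta=\{p:\langle p,u\rangle\geq -a_u\ \text{for all rays }u\}$, one reads off $a_{\hat v_j}=0$ from $\langle p,e_j\rangle\geq 0$ and $a_{v_j}=\lambda_j$ from $\langle p,v_j\rangle\geq -\lambda_j$, and then $[\omega]=\sum_u a_u\,\mathrm{PD}[\mu\inv(F_u)]=\sum_j\lambda_j x_j$ (as a sanity check, for $n=1$, $A=0$ this recovers the area-$\lambda_1$ form on $\CP^1$). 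I expect no conceptual difficulty here; the only real obstacle is bookkeeping --- keeping the sign/transpose conventions consistent among the ray normals $v_j$, the linear relations in $J$, and the support-number formula for $[\omega]$, and matching them against the indexing in the statement (in particular the $\sum_i A^i_k x_i$ appearing in the Poincar\'e dual of $\mu\inv(\hat F_k)$).
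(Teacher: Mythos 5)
Your argument is correct and coincides with what the paper does: the lemma is quoted there as a special case of Danilov's theorem with no proof given, and your Danilov--Jurkiewicz computation (Stanley--Reisner ideal generated by the products $\hat y_j y_j$ from the cube combinatorics, linear relations from the rays $\hat v_j$, $v_j$, plus the support-number formula giving $[\omega]=\sum_j\lambda_j x_j$) is exactly the standard argument being cited. The only slip is notational: your own relation $\hat y_i=y_i+\sum_j A^i_j y_j$ says the Poincar\'e dual of $\mu\inv(\hat F_k)$ is $x_k+\sum_j A^k_j x_j$, not $x_k+\sum_i A^i_k x_i$ as written at the end of your second paragraph (the lemma's ``$\sum_i A^i_i x_i$'' is itself a typo), which is harmless but worth fixing.
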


Let $M_A$ be the Bott manifold associated to a strictly upper triangular matrix $A \in \mat$, and fix $\lambda \in \R^n$.
Our next proposition give a criteria for whether $(A,\lambda)$ defined a symplectic Bott manifold,
or equivalently, whether there is an $(S^1)^n$ invariant K\"ahler form on $M_A$ in the cohomology class $\sum_i \lambda_i x_i$.
Note, in particular, that  we can always find some  $\lambda \in \R^n$ 
that satisfies these conditions:
Choose $\lambda_{i+1} >> \lambda_i$ for all $i$.

Given a non-empty subset $I \subseteq \n$,
let  $\max(I)$ denote the maximal element of $I$.

\begin{proposition}\label{kahler condition}
Let $A \in \mat$ be  strictly upper-triangular and fix $\lambda \in \R^n$. 
Then $(A,\lambda)$ defines a symplectic Bott manifold exactly if
$\Xi(I) > 0$ for every non-empty subset $I \subseteq \n$,
where\footnote{
In particular, the summand associated to $\{\max(I)\}$ is $\lambda_{\max(I)}$.}
%In particular, if $i = \max(I)$, the summand associated to $\{i\}$ is $\lambda_i$.}
$$\Xi(I) := 
 \sum_{i_0 < \dots < i_m }(-1)^{m}  \lambda_{i_0} A^{i_0}_{i_1}  \dots A^{i_{m-1}}_{i_m},$$
where the sum is taken over subsets  $\{i_0,\dots,i_m\} \subseteq I$
such that $i_0 < \dots < i_m = \max(I)$. 
\end{proposition}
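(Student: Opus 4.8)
The plan is to characterize when $\Delta(A,\lambda)$ is a Bott polytope by controlling, vertex by vertex, the "height" coordinates of $\Delta$, and to show that these heights are exactly the quantities $\Xi(I)$. Recall that $\Delta(A,\lambda)$ is defined by the $2n$ inequalities $\langle p, e_j\rangle \ge 0$ and $\langle p, e_j + \sum_i A^i_j e_i\rangle \le \lambda_j$, and that being a Bott polytope means precisely that $F_J \cap \hat F_K \ne \emptyset \iff J\cap K = \emptyset$. The only nontrivial direction is that the candidate vertices — those obtained by selecting, for each $j\in\n$, either the inequality $\langle p,e_j\rangle\ge 0$ or the inequality $\big\langle p, e_j+\sum_i A^i_j e_i\big\rangle\le\lambda_j$ to be tight — actually lie in $\Delta$ (the combinatorial type of the hypercube then follows automatically, since the $n$ chosen facet normals always form a $\Z$-basis because $A$ is strictly upper triangular). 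So the whole statement reduces to: for every $K\subseteq\n$, the unique point $p^K\in\R^n$ determined by $\langle p^K, e_j\rangle = 0$ for $j\notin K$ and $\big\langle p^K, e_j+\sum_i A^i_j e_i\big\rangle = \lambda_j$ for $j\in K$ satisfies all the remaining $2n - n$ inequalities, and that this holds for all $K$ iff $\Xi(I)>0$ for all nonempty $I$.

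The key computation is to solve for $p^K$ explicitly. Since $A$ is strictly upper triangular, the equations $\big\langle p^K, e_j + \sum_i A^i_j e_i\big\rangle = \lambda_j$ for $j\in K$ can be solved by back-substitution in decreasing order of $j$: writing $p^K = (p^K_1,\dots,p^K_n)$, for $j\in K$ one gets $p^K_j + \sum_{i<j} A^i_j p^K_i = \lambda_j$, while for $j\notin K$ one has $p^K_j = 0$. Unwinding this recursion gives a closed form for $p^K_j$ as an alternating sum over chains $i_0 < i_1 < \dots < i_m = j$ with all of $i_1,\dots,i_m$ lying in $K$ (the indices where we "pick up" a factor of $A$), namely $p^K_j = \sum (-1)^m \lambda_{i_0} A^{i_0}_{i_1}\cdots A^{i_{m-1}}_{i_m}$. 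Comparing with the definition of $\Xi$, one sees that $p^K_{\max K}$ — evaluated against the relevant facet normal — is exactly $\Xi(K')$ for appropriate subsets $K'$; more precisely, for each $j$ the quantity $\langle p^K, e_j\rangle \ge 0$ that must be checked, or the slack $\lambda_j - \big\langle p^K, e_j + \sum_i A^i_j e_i\big\rangle \ge 0$, works out to a signed combination that is nonnegative for all $K$ precisely when $\Xi(I)>0$ for every nonempty $I$. The cleanest route is: fix $j$, let $I$ range over the possible "trailing segments" of $K$ ending at $j$, and observe that the minimal values of these slacks over all admissible $K$ are exactly the $\Xi(I)$; conversely, taking $K = I$ realizes $\Xi(I)$ as one such slack, so the positivity conditions are also necessary. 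One must also handle the degenerate case where some $\Xi(I)=0$, which makes a purported facet collapse and violates $F_J\cap\hat F_K\ne\emptyset\iff J\cap K=\emptyset$ (a face that should be nonempty becomes empty, or drops dimension); strict positivity is what keeps the polytope combinatorially a cube.

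I expect the main obstacle to be bookkeeping: matching the back-substitution expansion of $p^K_j$ with the chain-sum formula for $\Xi(I)$ and correctly identifying which of the $2n$ inequalities is governed by which $\Xi(I)$, including getting the signs and the roles of $e_j$ versus $e_j + \sum_i A^i_j e_i$ right. A secondary subtlety is the "only if" direction: one needs to exhibit, for each nonempty $I$, a specific choice of $K$ (essentially $K=I$, or $I$ together with complementary zero-coordinates) whose corresponding vertex inequality reads off $\Xi(I)$, so that $\Xi(I)\le 0$ forces either non-convexity or a collapsed face — here one should invoke the already-established fact that a Bott polytope is combinatorially the cube, so any failure of strict positivity is detectable. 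Once the formula $p^K_j = \sum(-1)^m\lambda_{i_0}A^{i_0}_{i_1}\cdots A^{i_{m-1}}_{i_m}$ is in hand, both directions are short.
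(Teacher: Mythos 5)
Your computation of the candidate vertices is exactly the paper's route: for each $I\subseteq[n]$ the point $p_I$ cut out by $\langle p,e_j\rangle=0$ for $j\notin I$ and $\langle p, e_j+\sum_i A^i_j e_i\rangle=\lambda_j$ for $j\in I$ is solved by back-substitution using the strict upper-triangularity of $A$, and one finds $\langle p_I,e_j\rangle=\Xi([j]\cap I)$ for $j\in I$ and slack $\lambda_j-\langle p_I, e_j+\sum_i A^i_j e_i\rangle=\Xi(([j]\cap I)\cup\{j\})$ for $j\notin I$; the necessity direction then works as you sketch (nonemptiness of $F_I\cap\hat{F}_{I^c}$ gives $\Xi(I)\ge 0$, and emptiness of $F_I\cap\hat{F}_{I^c}\cap\hat{F}_{\max(I)}$ gives strictness --- note ``non-convexity'' cannot occur, since $\Delta$ is always an intersection of half-spaces).

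The genuine gap is in the sufficiency direction, where you assert that once all candidate points lie in $\Delta$ ``the combinatorial type of the hypercube follows automatically, since the $n$ chosen facet normals form a $\Z$-basis.'' That only shows each $p_I$ is a smooth vertex and yields the implication $J\cap K=\emptyset\Rightarrow F_J\cap\hat{F}_K\neq\emptyset$. The definition of a Bott polytope also demands the converse, $J\cap K\neq\emptyset\Rightarrow F_J\cap\hat{F}_K=\emptyset$, and this is not automatic: $\Delta$ is cut out by $2n$ inequalities, and a priori it could have further vertices at which both inequalities of some pair $j$ are tight (and neither inequality of some other pair), i.e.\ points of $F_j\cap\hat{F}_j$; positivity of the $\Xi(I)$ at the chosen $2^n$ points does not by itself exclude them. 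The paper closes this by observing that each $p_I$ lies on exactly $n$ defining hyperplanes, hence has exactly $n$ edges, all of which lead to points of the form $p_{I\setminus\{i\}}$ or $p_{I\cup\{i\}}$; by connectedness of the vertex--edge graph (Balinski's theorem) every vertex of $\Delta$ is some $p_I$, and since each $p_I$ satisfies all non-chosen inequalities strictly (this is where $\Xi(I)>0$ rather than $\ge 0$ enters) and every nonempty face contains a vertex, one concludes $F_J\cap\hat{F}_K=\emptyset$ whenever $J\cap K\neq\emptyset$. You need to supply this (or an equivalent) argument; with it added, your proposal coincides with the paper's proof.
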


\begin{proof}
Let $\Delta$ be the polytope associated to $(A,\lambda)$.
Given a (possibly empty) subset $I \subseteq \n$, 
let $p_I$ be the unique point  
in the intersection of hyperplanes
$$ \textstyle \{ \eta \in \R^n \mid   \langle \eta, e_j \rangle = 0  \ \forall j \in I^c  \mbox{ and }
    \langle \eta, e_j + \sum_i A^i_j e_i  \rangle = \lambda_j \ \forall j \in I \}.$$
By induction on $j$, 
\begin{equation}\label{Xi1}
\langle p_I,e_j \rangle = \Xi([j] \cap I) \quad \forall \ j \in I.
\end{equation}
Thus,
\begin{equation}\label{Xi2}
\langle p_I, e_j + \sum_i A^i_j e_i \rangle = \lambda_j - \Xi([j] \cap I \cap \{j\})
\quad \forall \ j \in I^c.
\end{equation}

Assume first that $\Delta$ is a Bott polytope. Then
$F_I \cap \Hat{F}_{I^c} \neq \emptyset$ for every non-empty subset $I \subseteq \n$, and so $p_I \in \Delta$. By \eqref{Xi1}, this implies that $\langle p_I, e_{\max(I)} \rangle = \Xi(I) \geq 0$.
In contrast $F_I \cap \Hat{F}_{I^c} \cap \Hat{F}_{\max(I)} = \emptyset$,
and so $\langle p_I , e_{\max(I)} \rangle  = \Xi(I) \neq 0$.
Therefore, $\Xi(I) >0$.

So assume instead that $\Xi(I) > 0$ for every non-empty subset $I \subseteq \n$.
By \eqref{Xi1} and \eqref{Xi2}, this  implies that $\{p_I\} = F_I \cap \hat{F}_{I^c}$
is a vertex of $\Delta$ for every $I \subseteq \n$; 
moreover,  $p_I$ doesn't lie
in $\Hat{F}_j$ for any $j \in I$ or in $F_j$ for any  $j \in I^c$.
Hence,  $F_J \cap \Hat{F}_K \neq \emptyset$ 
for all $J, K \subseteq \n$ such that $J \cap K = \emptyset$.
Moreover, since the vertex $p_I$ lies on exactly $n$ defining hyperplanes,
there are exactly $n$ vertices that are joined to $p_I$ by an edge;
namely, $p_{I \setminus \{i\}}$ for $i \in I$, and $p_{I \cup \{i\}}$ for $i \in I^c$.
Therefore, every vertex of $\Delta$ is equal to $p_I$ for some $I$.
(By Balinski's Theorem, 
the  graph of vertices and edges of any polytope is connected.)
Since every face contains a vertex, this implies that $F_J \cap  \Hat{F}_K = \emptyset$ for
all $J, K \subseteq \n$ such that $J \cap K \neq \emptyset$.
Hence, $\Delta$ is a Bott polytope. \end{proof}

\begin{example}
A strictly upper triangular matrix  $A \in M_3(\Z)$ and $\lambda \in \R^3$
define a symplectic Bott manifold exactly if the
following quantities are positive:
$\lambda_1$, $\lambda_2$, $\lambda_2 -\lambda_1 A_2^1$, 
  $\lambda_3$, $\lambda_3 - \lambda_1 A_3^1$, $\lambda_3 - \lambda_2 A_3^2$, 
and   $\lambda_3 - \lambda_2 A_3^2 - \lambda_1 A_3^1 + \lambda_1 A^1_2 A^2_3 .$
\end{example}

One straightforward way to construct a symplectomorphism between symplectic toric
manifolds is to find an affine transformation between their moment polytopes.
More specifically, let $(M,\omega,\mu)$ and $(\tM,\tomega,\Tilde \mu)$ be $2n$-dimensional symplectic toric manifolds
with  moment polytope $\Delta$ and $\Tilde \Delta$, respectively.  Assume that there
exists a unimodular matrix $\Lambda \in \mat$ and $v \in \R^n$ such that
$\Lambda(\Tilde \Delta) + v = \Delta$. 
Then Delzant's theorem implies that  there exists a symplectomorphism $f$ from
$(\tM,\tomega)$ to $(M, \omega)$
that covers the affine transformation $\Lambda+ v$, i.e.,  the following diagram commutes:
\begin{displaymath}
\xymatrix{
\tM \ar^{f}[r] \ar[d]_{\Tilde \mu}  & M  \ar[d]^{\mu} 
 \\
\Tilde \Delta \ \ar^{\ \ \Lambda+  v\ \ }[r] &\ \Delta \,.
} 
\end{displaymath}
In particular, if $\Lambda + v$ takes a facet $\Tilde G$ of $\Tilde \Delta$ to a facet
$G$ of $\Delta$, then $f$ takes the moment preimage $\Tilde{\mu}^{-1}(\Tilde G)$ to 
the moment preimage $\mu^{-1}(G)$. Therefore, $f^* \colon \hh^*(M;\Z) \to \hh^*(\tM;\Z)$ takes the Poincar\'e dual of
$\mu^{-1}(G)$ to the Poincar\'e dual of $\Tilde{\mu}^{-1}(\Tilde G)$.

\begin{lemma}\label{permute}
Let $(M,\omega)$ and $(\tM,\tomega)$ be the symplectic Bott manifolds associated to strictly upper triangular matrices
$A$ and $\tA$ in $\mat$ and 
$\lambda$ and $\tlambda$  in $\R^n$. 
Let $F \colon \hh^*(M;\Z) \to \hh^*(\tM;\Z)$ 
be a ring isomorphism such that
$F([\omega]) = [\tomega]$ and
$F(x_i) =\tx_{\sigma(i)}$ for all $i$, 
where $\sigma$ is a permutation of $\n$.
Then there exists a symplectomorphism 
$f$ from $(\tM,\tomega)$ to $(M,\omega)$ so that $f^* = F$.
\end{lemma}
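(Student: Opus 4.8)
The plan is to realize $F$ by an equivariant symplectomorphism produced from Delzant's theorem, exactly as in the paragraph preceding the lemma: I will exhibit a unimodular linear map $\Phi\colon\R^n\to\R^n$ with $\Phi(\tilde\Delta)=\Delta$ that carries the facet $\tilde F_{\sigma(i)}$ of $\tilde\Delta$ to the facet $F_i$ of $\Delta$ for every $i$. Here $\Delta=\Delta(A,\lambda)$ and $\tilde\Delta=\Delta(\tA,\tlambda)$ are the moment polytopes, and, by Lemma~\ref{cohomology}, I take $x_i$ (resp.\ $\tx_i$) to be the Poincar\'e dual of $\mu\inv(F_i)$ (resp.\ $\tilde\mu\inv(\tilde F_i)$), so that $[\omega]=\sum_i\lambda_i x_i$ and $[\tomega]=\sum_i\tlambda_i\tx_i$.

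First I would convert the hypotheses on $F$ into identities among $A,\tA,\lambda,\tlambda$. From $F([\omega])=[\tomega]$, i.e.\ $\sum_i\lambda_i\tx_{\sigma(i)}=\sum_i\tlambda_i\tx_i$, together with the $\R$-linear independence of $\tx_1,\dots,\tx_n$ in $\hh^2(\tM;\R)$, I get $\tlambda_{\sigma(i)}=\lambda_i$ for all $i$. For the matrices, I pass to $\Q$-coefficients and lift $F$ to the graded $\Q$-algebra isomorphism $\phi\colon\Q[x_1,\dots,x_n]\to\Q[\tx_1,\dots,\tx_n]$, $x_i\mapsto\tx_{\sigma(i)}$. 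By Lemma~\ref{cohomology}, writing $g_i=x_i^2+\sum_j A^i_j x_ix_j$ and $\tilde h_k=\tx_k^2+\sum_l\tA^k_l\tx_k\tx_l$, the relation ideal $I=(g_1,\dots,g_n)$ satisfies $\phi(I)\subseteq\tilde I=(\tilde h_1,\dots,\tilde h_n)$. In degree two, $I_2=\operatorname{span}_\Q\{g_i\}$ and $\tilde I_2=\operatorname{span}_\Q\{\tilde h_k\}$ are both $n$-dimensional, because the monomial $x_i^2$ occurs in $g_i$ alone (and $\tx_k^2$ in $\tilde h_k$ alone); since $\phi$ is an isomorphism in each degree and $\phi(I_2)\subseteq\tilde I_2$, this forces $\phi(I_2)=\tilde I_2$. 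Writing $\phi(g_i)=\sum_k c_{ik}\tilde h_k$ and reading off, for each $a$, the coefficient of the pure square $\tx_a^2$ on both sides shows $c_{ia}=\delta_{a,\sigma(i)}$, so $\phi(g_i)=\tilde h_{\sigma(i)}$; comparing the coefficients of the linearly independent monomials $\tx_{\sigma(i)}\tx_l$ in this polynomial identity then yields $\tA^{\sigma(i)}_{\sigma(j)}=A^i_j$ for all $i,j\in\n$ (in particular the mere existence of $F$ forces $\sigma$ to be compatible with the strictly-upper-triangular structure of both matrices).

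With these identities in hand I would take $\Phi$ to be the coordinate permutation $(\Phi p)_k=p_{\sigma(k)}$, which lies in $\GL_n(\Z)$. Substituting into the inequalities defining $\tilde\Delta=\Delta(\tA,\tlambda)$ and using $\tA^{\sigma(i)}_{\sigma(j)}=A^i_j$ and $\tlambda_{\sigma(i)}=\lambda_i$ shows $\Phi(\tilde\Delta)=\Delta$, with $\Phi$ sending the facet $\tilde F_{\sigma(i)}$ of $\tilde\Delta$ to the facet $F_i$ of $\Delta$. By Delzant's theorem and the discussion preceding the lemma, there is then a symplectomorphism $f\colon(\tM,\tomega)\to(M,\omega)$ covering $\Phi$ under which $f^*$ sends the Poincar\'e dual of $\mu\inv(F_i)$ to the Poincar\'e dual of $\tilde\mu\inv(\tilde F_{\sigma(i)})$; that is, $f^*(x_i)=\tx_{\sigma(i)}=F(x_i)$ for all $i$. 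Since $x_1,\dots,x_n$ generate $\hh^*(M;\Z)$ as a ring and $f^*$ and $F$ are both ring homomorphisms, $f^*=F$, which completes the proof.

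The step I expect to be the main obstacle is extracting the rigid identities $\tA^{\sigma(i)}_{\sigma(j)}=A^i_j$ from the Danilov presentation --- equivalently, checking that the existence of the ring isomorphism $F$ already constrains $\sigma$, $A$ and $\tA$ appropriately. Once this linear-algebra bookkeeping is done, the construction of $\Phi$ and the appeal to Delzant's theorem are routine.
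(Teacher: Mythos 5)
Your proposal is correct and follows essentially the same route as the paper: extract $\tlambda_{\sigma(i)}=\lambda_i$ and $\tA^{\sigma(i)}_{\sigma(j)}=A^i_j$ from the hypotheses on $F$, observe that the induced permutation matrix (your $\Phi$ is exactly the paper's $\Lambda^T$) maps $\Delta(\tA,\tlambda)$ onto $\Delta(A,\lambda)$ matching the facets dual to $\tx_{\sigma(i)}$ and $x_i$, and conclude via Delzant's theorem as in the discussion preceding the lemma. The only difference is that you spell out the degree-two ideal computation yielding $\tA^{\sigma(i)}_{\sigma(j)}=A^i_j$, which the paper states without detail; that computation is correct.
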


\begin{proof}
Let $\Delta$ and $\Tilde \Delta$ be the moment polytopes of $(M,\omega)$ and $(\tM,\tomega)$,
respectively.
Let $\Lambda \in \mat$ be the unimodular matrix taking $e_i$ to $e_{\sigma(i)}$. 
Since $F(x_i) =\tx_{\sigma(i)}$ for all $i$ and 
$F\big( \sum \lambda_i x_i \big) = \sum_i \tlambda_i \tx_i$,
$$ \tA^{\sigma(i)}_{\sigma(j)} = A^i_j \quad \mbox{and} \quad
\tlambda_{\sigma(i)} = \lambda_i \quad \forall \ i,j.$$
Therefore, $\Lambda^T( \Tilde \Delta ) = \Delta$;
moreover,  $\Lambda^T$ takes the facet 
$\{  \langle p, e_{\sigma(j)} + \sum_i \tA_{\sigma(j)}^i e_i \rangle = \tlambda_{\sigma(j)} \} \cap \Tilde \Delta$ to  the facet $\{ \langle p, e_j  + \sum_i A_j^i e_i \rangle = \lambda_j \} \cap \Delta$.
The moment preimages of these facets are Poincar\'e dual to $\tx_{\sigma(i)}$ and $x_i$, respectively.
Therefore, as we saw above, the claim follows from Delzant's theorem.

\end{proof}

\begin{lemma}\label{change sign}
Let $A, \breve A \in \mat$  be strictly  upper-triangular,
and fix $\lambda, \breve \lambda$  in $\R^n$.
Assume that there exists $k \in \n$ so that
\begin{gather*}
\breve A^k_j = -  A^k_j  \quad  \forall \ j,  \\
\textstyle \breve A^i_j  
=  A^i_j - A^i_k  A^k_j \quad \mbox{and} \quad
\textstyle \breve \lambda_j
= \lambda_j -  \lambda_k   A^k_j \quad \forall \  j, \  \forall \  i \neq k.
\end{gather*}
If $(A,\lambda)$ defines a symplectic Bott manifold $(M, \omega)$ 
then 
$(\breve A, \breve \lambda)$ defines a symplectic Bott manifold
$(\breve M, \breve \omega)$; moreover,
there exists a symplectomorphism $f$ from $(M,\omega)$ to $(\breve M,\breve \omega)$
 such that
$f^*$ takes $\breve x_k$ to $x_k + \sum_j A_j^k x_j$
and $\breve x_j$ to $x_j$ for all $j \neq k$.
\end{lemma}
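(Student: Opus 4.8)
The plan is to follow the template of Lemma~\ref{permute}: I will write down a unimodular affine transformation $B$ carrying the moment polytope $\Delta := \Delta(A,\lambda)$ of $(M,\omega)$ onto the polytope $\breve\Delta := \Delta(\breve A,\breve\lambda)$, and then extract both assertions from Delzant's theorem and Lemma~\ref{cohomology}. Before starting I would record the consequences of strict upper-triangularity that get used throughout: $A^k_k = 0$ and $A^i_k = 0$ for $i \geq k$, so that the hypotheses give $\breve A^i_k = A^i_k$ for all $i$ and $\breve\lambda_k = \lambda_k$.

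First I would define $B \colon \R^n \to \R^n$ by $(Bp)_i = p_i$ for $i \neq k$ and
\begin{equation*}
(Bp)_k \;=\; \lambda_k - \big\langle p,\ e_k + \sum_i A^i_k e_i \big\rangle .
\end{equation*}
By strict upper-triangularity the linear part of $B$ is an integral matrix, triangular with all diagonal entries $\pm 1$, so $B$ is unimodular affine, with constant term $\lambda_k e_k \in \R^n$. Next I would verify that $B$ sends the $2n$ halfspaces cutting out $\Delta$ bijectively onto the $2n$ halfspaces cutting out $\breve\Delta$, which then forces $B(\Delta) = \breve\Delta$. Substituting the formula for $B$ and using $A^k_k = 0$ repeatedly so that the $A^i_k$ cross-terms cancel, one checks that: $\langle Bp, e_k\rangle \geq 0$ is the defining inequality of $F_k$; the inequality $\langle Bp,\ e_k + \sum_i \breve A^i_k e_i\rangle \leq \breve\lambda_k$ reduces to $\langle p, e_k\rangle \geq 0$, the inequality of $\Hat F_k$; for $j \neq k$, $\langle Bp, e_j\rangle \geq 0$ is the inequality of $\Hat F_j$; and, using $\breve A^i_j = A^i_j - A^i_k A^k_j$ together with $\breve\lambda_j = \lambda_j - \lambda_k A^k_j$, the inequality $\langle Bp,\ e_j + \sum_i \breve A^i_j e_i\rangle \leq \breve\lambda_j$ collapses to $\langle p,\ e_j + \sum_i A^i_j e_i\rangle \leq \lambda_j$, the inequality of $F_j$. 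Thus $B$ identifies $\Hat F_k \leftrightarrow \breve F_k$ and $F_k \leftrightarrow \breve{\Hat F}_k$, while $\Hat F_j \leftrightarrow \breve{\Hat F}_j$ and $F_j \leftrightarrow \breve F_j$ for every $j \neq k$.

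Granting this, I would conclude as follows. Since $\Delta$ is a Bott polytope and $B$ is an affine isomorphism inducing the facet correspondence above, a short case analysis according to whether $k \in J$ and whether $k \in K$ reduces the equivalence $\breve F_J \cap \breve{\Hat F}_K \neq \emptyset \iff J \cap K = \emptyset$ to the known statements $F_{J'} \cap \Hat F_{K'} \neq \emptyset \iff J' \cap K' = \emptyset$ for $\Delta$; hence $\breve\Delta$ is a Bott polytope, and therefore $(\breve A,\breve\lambda)$ does define a symplectic Bott manifold $(\breve M,\breve\omega)$, whose moment polytope is $\breve\Delta$. Delzant's theorem then produces a symplectomorphism $f \colon (M,\omega) \to (\breve M,\breve\omega)$ covering $B$, and — exactly as in the paragraph preceding Lemma~\ref{permute} — $f$ carries $\mu^{-1}(G)$ to $\breve\mu^{-1}(B(G))$ for each facet $G$ of $\Delta$, so $f^*$ sends the Poincar\'e dual of $\breve\mu^{-1}(B(G))$ to the Poincar\'e dual of $\mu^{-1}(G)$. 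Applying this with $G = \Hat F_k$ (so $B(G) = \breve F_k$) and with $G = F_j$, $j \neq k$ (so $B(G) = \breve F_j$), and invoking Lemma~\ref{cohomology} to identify $\breve x_j$ with the Poincar\'e dual of $\breve\mu^{-1}(\breve F_j)$, $x_j$ with that of $\mu^{-1}(F_j)$, and $x_k + \sum_j A^k_j x_j$ with that of $\mu^{-1}(\Hat F_k)$, I obtain $f^*(\breve x_k) = x_k + \sum_j A^k_j x_j$ and $f^*(\breve x_j) = x_j$ for all $j \neq k$, as required.

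The one genuine computation here is the halfspace check in the second step, and that is where I expect the only real difficulty to lie: the bookkeeping is mildly delicate, since one must repeatedly invoke $A^k_k = 0$ and the strict upper-triangularity of $A$ so that the cross-terms drop out. There is, however, no conceptual obstacle; once $B$ is found and the halfspace correspondence is established, the combinatorial verification that $\breve\Delta$ is a Bott polytope and the identification of $f^*$ are purely formal, following Lemmas~\ref{permute} and~\ref{cohomology} and the discussion between them.
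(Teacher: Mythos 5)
Your proposal is correct and follows essentially the same route as the paper: your affine map $B$ is exactly the paper's transformation $\Lambda^T + \lambda_k e_k$ (with $\Lambda$ the unimodular matrix sending $e_k$ to $-e_k - \sum_i A^i_k e_i$ and fixing the other $e_j$), and the facet correspondence, the Bott-polytope conclusion, and the identification of $f^*$ via Delzant's theorem and Lemma~\ref{cohomology} all match the paper's argument. The only difference is that you spell out the halfspace computation and the combinatorial check that $\breve\Delta$ is a Bott polytope, which the paper leaves implicit.
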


\begin{proof}
Let $\Delta$ be the polytope associated to $(A,\lambda)$ and $\breve{\Delta}$ be the polytope associated to $( \breve{A}, \breve{\lambda})$.
Let $\Lambda \in \mat$  be the  unimodular matrix that takes $e_k$ to 
$-e_k - \sum_i A^i_k e_i$ and takes $e_j$ to itself for all $j \neq k$.  
Then $\Lambda^T(\Delta)  + \lambda_k e_k = \breve{\Delta}$. 
Therefore, if $(A,\lambda)$ defines a symplectic Bott manifold $(M,\omega)$, then
$(\breve A, \breve \lambda)$  defines a symplectic Bott manifold $(\breve M, \breve \omega)$.
Moreover, in this case the affine transformation takes the facet
$\{\langle p, e_k  \rangle = 0 \} \cap \Delta$, whose moment preimage is
Poincar\'e dual to $x_k + \sum_i A_k^i x_i$, to the facet
$\{\langle p, e_k + \sum_i \breve{A}_k^i e_i \rangle = \breve{\lambda}_k \} 
\cap \breve{\Delta}$, whose moment preimage is Poincar\'e dual to  $\breve x_k$.
Similarly,  for all $j \neq k$ the transformation takes 
$\{\langle p, e_j + \sum_i {A}_j^i e_i \rangle = {\lambda}_j \} \cap \Delta$,
whose moment preimage is
Poincar\'e dual to $x_j$, to the facet
$\{\langle p, e_j + \sum_i \breve{A}_j^i e_i \rangle = \breve{\lambda}_j \} 
\cap \breve{\Delta}$, whose moment preimage is Poincar\'e dual to  $\breve x_j$.
Therefore, the claim follows from 
Delzant's theorem.
\end{proof}

Our last application was  used to deduce Corollary \ref{z trivial}.

\begin{proposition}\labell{cube}
Let $\tM$ be a Bott manifold, and let $(M,\omega)$
be a symplectic toric manifold
with $\hh^*(M;\Z)\cong \hh^*(\tM;\Z)$. 
Then  $(M, \omega)$ is symplectomorphic to a symplectic Bott manifold.
\end{proposition}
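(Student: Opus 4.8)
The plan is to reduce the statement to Delzant's theorem by bringing the moment polytope of $(M,\omega)$ into the shape of a Bott polytope. Since $(M,\omega,\mu)$ is a symplectic toric manifold, its moment polytope $\Delta$ is a smooth polytope, $M$ is equivariantly diffeomorphic to the projective toric variety $X_\Delta$, and $\omega$ is an $(S^1)^n$-invariant K\"ahler form for the associated complex structure (it arises as a symplectic reduction of a linear K\"ahler space). Hence, by the discussion preceding the definition of symplectic Bott manifold in Section~\ref{sec Bott manifolds}, it suffices to exhibit an equivariant biholomorphism from $X_\Delta$ to a Bott manifold $M_A$ with $A \in \mat$ strictly upper triangular --- equivalently, a symplectomorphism of $(M,\omega)$ onto a symplectic toric manifold whose moment polytope is $\Delta(A,\lambda)$ for some such $A$ and some $\lambda \in \R^n$: transporting $\omega$ along such a map then produces an invariant K\"ahler form on $M_A$, which by definition makes it a symplectic Bott manifold.

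The first and main step is to show that $\Delta$ is combinatorially equivalent to the cube $[0,1]^n$. Here I would invoke the results of Masuda and Panov \cite{MP}: from $\hh^*(M;\Z) \cong \hh^*(\tM;\Z)$ and Danilov's presentation \cite{D} of $\hh^*(X_\Delta;\Z)$, comparing ranks forces $\Delta$ to have exactly $2n$ facets and $2^n$ vertices, and the full ring structure --- the fact that this ring is isomorphic to a Bott ring $\Z[x_1,\dots,x_n]/(x_i^2 + \sum_j A^i_j x_i x_j)$ --- then pins the minimal non-faces of $\Delta$ down to $n$ pairwise-disjoint pairs of facets, which is exactly the combinatorial cube condition $F_J \cap \Hat F_K \neq \emptyset \iff J \cap K = \emptyset$. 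Recovering the combinatorial type of $\Delta$ from the integral cohomology ring is the part I expect to be the genuine obstacle, and is where \cite{MP} carries the load.

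Once $\Delta$ is known to be a combinatorial cube, it remains to normalise. Choosing a $\Z$-basis of the torus lattice so that the $n$ facets through a chosen vertex have inward primitive normals $e_1,\dots,e_n$, the opposite facets then have inward normals $v_j = -e_j + \sum_{i\neq j} A^i_j e_i$ for integers $A^i_j$, and I would argue that smoothness of the remaining vertices of $\Delta$ (equivalently, completeness and smoothness of the normal fan) forces the dependency digraph of $A$ to be acyclic --- immediate for $n=2$, where it recovers the classification of smooth toric surfaces with four facets as Hirzebruch surfaces, and in general again part of the Masuda--Panov analysis of toric manifolds over the cube. A topological sort of this digraph is a permutation $\sigma$ of $\n$ after which $A$ is strictly upper triangular; since $\sigma$ is realised on symplectic toric manifolds by a symplectomorphism (the mechanism of Lemmas~\ref{permute} and~\ref{change sign}, coming from Delzant's theorem), we may assume $A$ is strictly upper triangular. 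Then $\Delta = \Delta(A,\lambda)$ is a Bott polytope and $(M,\omega)$ is the symplectic Bott manifold it defines, completing the proof.
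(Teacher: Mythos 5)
Your proposal is correct and follows essentially the same route as the paper: both reduce the statement to Delzant's theorem by showing, via the results of Masuda and Panov \cite{MP}, that the moment polytope of $(M,\omega)$ is a combinatorial cube and can be carried by a unimodular transformation onto a Bott polytope $\Delta(A,\lambda)$. The only cosmetic difference is that the paper cites \cite{MP} directly (Theorem 5.5, phrased for $\Z_2$-coefficient BQ-algebras, and Corollary 3.5) for exactly the two steps you sketch by hand, namely the cube recognition and the strictly-upper-triangular normalisation.
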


\begin{proof}
Clearly $M$ and $\tM$ must have the same dimension. Call it $2n$. 
Since  $\tM$ is a Bott manifold it's quotient polytope is an $n$-cube.
Hence, by Theorem 5.5 in \cite{MP},
the fact that $\hh^*(M;\Z_2)=\hh^*(\tM;\Z_2)$ implies that that quotient polytope of $M$ is also an $n$-cube 
(because the cohomology rings with $\Z_2$ coefficients are BQ-algebras in the sense of \cite{MP}).
By Corollary 3.5 in \cite{MP}, this implies that there exists a unimodular matrix in $\mat$
taking the moment polytope of $\Delta$ to a  Bott polytope.
Therefore, the claim follows from Delzant's theorem.
\end{proof}

%====================================================================
\section{Constructing symplectomorphisms between Bott manifolds }\label{sec tech}
%================================================
The main goal of this section is to use
the toric degenerations discussed in Section \ref{sec degenerations}
to construct symplectomorphisms between 
certain symplectic Bott manifolds.
More concretely, 
symplectic Bott manifolds $(M,\omega)$ and $(\tM, \tomega)$
are symplectomorphic
whenever  there exists integers $k, \ell,$ and $\gamma$ with $1 \leq k < \ell \leq n$, 
and an isomorphism from $\hh^*(M)$ to $\hh^*(\tM)$ that takes
$x_k$  to $\tx_k - \gamma \tx_\ell$, $x_i$ to $\tx_i$ for all $i \neq k$,
and $[\omega]$ to $[\tomega]$. 
This result, described in Proposition~\ref{keep}, directly generalizes
the symplectomorphisms between  Hirzebruch surfaces.
In particular,
in general there is no unimodular transformation between their
moment polytopes, so these Bott manifolds are not equivariantly symplectomorphic.
In the next section, 
we will use this result to  prove that, up to symplectomorphism,
every symplectic Bott manifold $(M,\omega)$ has a simplified form. 
To prepare for Proposition~\ref{keep}, we  give a criteria for
such isomorphisms between cohomology rings.

\begin{lemma} \labell{ring} 
Let $M$ and $\tM$ be the  Bott manifolds
associated to distinct strictly upper-triangular matrices $A$ and $\tA$ in $\mat$, 
respectively.
Fix
integers $1 \leq k < \ell \leq n$. The following are equivalent:
\begin{enumerate}
\item There exists $\gamma \in \Z$ and an isomorphism
from $\hh^*(M)$ to $\hh^*(\tM)$ that takes $x_k$ to $\tx_k - \gamma \tx_\ell$ and $x_i$ to $\tx_i$ for all $i \neq k$.
\item
The following equations hold:
\begin{gather*}
\tA^k_\ell = A^k_\ell \pmod 2, \\
\textstyle \tA^i_\ell = A^i_\ell - \frac{1}{2} A^i_k \big(A^k_\ell - \tA^k_\ell\big)  
 \quad  \forall \ i \neq k, \\
\tA^i_j = A_j^i \quad \forall \ i, \  \forall \  j \neq \ell, \\
 \textstyle A^k_j = \frac{1}{2}\big(A^k_\ell +\tA^k_\ell\big) A^\ell_j 
 \quad \forall \ j \neq \ell.
\end{gather*}
\end{enumerate}
Moreover, when (1) is satisfied,  $\gamma = \frac{1}{2} \big( A^k_\ell - \tA^k_\ell \big)$.
Hence,  given $\lambda, \tlambda \in \R^n$, the isomorphism 
takes $\sum_i \lambda_i x_i$
to $\sum_i \tlambda_i \tx_i$  exactly if
$$\textstyle \tlambda_\ell = \lambda_\ell - \frac{1}{2} \lambda_k \big(A^k_\ell - \tA^k_\ell \big) \quad \mbox{and} \quad \tlambda_j = \lambda_j \quad \forall\  j \neq \ell.$$
\end{lemma}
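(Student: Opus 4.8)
The plan is to prove the chain $(1)\Rightarrow(2)\Rightarrow(1)$ together with the formula for $\gamma$ and the resulting condition on $\lambda,\tlambda$, all by unwinding the presentation of the cohomology ring given in Lemma~\ref{cohomology}. Recall that $\hh^*(M;\Z)\simeq\Z[x_1,\dots,x_n]/(x_i^2+\sum_j A^i_j x_ix_j)$ and similarly for $\tM$ with $\tx_i$ and $\tA$. A ring homomorphism $\Phi\colon\hh^*(M)\to\hh^*(\tM)$ sending $x_k\mapsto \tx_k-\gamma\tx_\ell$ and $x_i\mapsto\tx_i$ for $i\neq k$ is well defined exactly when each relator of $\hh^*(M)$ is sent into the ideal of relators of $\hh^*(\tM)$; it is an isomorphism iff the determinant of the linear map on $\hh^2$ is $\pm1$, which here is automatic since the matrix of $x_i\mapsto\Phi(x_i)$ in the $\tx$-basis is unipotent (upper triangular with $1$'s on the diagonal after reordering $k,\ell$). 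So the content is entirely in the well-definedness.

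First I would compute $\Phi$ applied to the relator $r_i:=x_i^2+\sum_j A^i_j x_ix_j$ for each $i$, expressing the result in $\Z[\tx]$ and then reducing modulo the relators $\tr_i$ of $\hh^*(\tM)$. For $i\neq k,\ell$ this is immediate and forces $\tA^i_j=A^i_j$ for all $j\neq\ell$ and gives one linear constraint involving $\tA^i_\ell$, $A^i_\ell$, $A^i_k$ and $\gamma$; for $i=\ell$ one uses that $x_\ell\mapsto\tx_\ell$ and the $\tr_\ell$ relation, which pins down the $j\neq\ell$ comparison; the subtle case is $i=k$, where $\Phi(x_k)=\tx_k-\gamma\tx_\ell$, so $\Phi(x_k)^2$ expands into $\tx_k^2-2\gamma\,\tx_k\tx_\ell+\gamma^2\tx_\ell^2$, and one substitutes $\tx_k^2=-\sum_j\tA^k_j\tx_k\tx_j$ and $\tx_\ell^2=-\sum_j\tA^\ell_j\tx_\ell\tx_j$, then collects coefficients of the monomial basis of $\hh^4(\tM)$. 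Matching the coefficients of $\tx_k\tx_\ell$ yields $\tA^k_\ell=A^k_\ell-2\gamma$, hence both the parity statement $\tA^k_\ell\equiv A^k_\ell\pmod 2$ and the formula $\gamma=\tfrac12(A^k_\ell-\tA^k_\ell)$; matching the coefficients of $\tx_k\tx_j$ for $j\neq\ell$ gives $A^k_j=\tfrac12(A^k_\ell+\tA^k_\ell)A^\ell_j$; and matching $\tx_k\tx_i\tx_\ell$-type contributions (feeding the $i=k$ relator's $\tx_k\tx_i$ terms through the $\tx_k^2$ substitution) produces the displayed formula $\tA^i_\ell=A^i_\ell-\tfrac12 A^i_k(A^k_\ell-\tA^k_\ell)$ for $i\neq k$. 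Running these implications in reverse shows that $(2)$, together with $\gamma:=\tfrac12(A^k_\ell-\tA^k_\ell)\in\Z$, makes $\Phi$ well defined, giving $(2)\Rightarrow(1)$.

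For the last sentence, I would simply apply the (now established) isomorphism to $[\omega]=\sum_i\lambda_i x_i$ using Lemma~\ref{cohomology}: $\Phi(\sum_i\lambda_ix_i)=\sum_{i\neq k}\lambda_i\tx_i+\lambda_k(\tx_k-\gamma\tx_\ell)=\sum_i\lambda_i\tx_i-\gamma\lambda_k\tx_\ell$, and since the $\tx_i$ are a basis of $\hh^2(\tM;\R)$ this equals $\sum_i\tlambda_i\tx_i$ iff $\tlambda_\ell=\lambda_\ell-\gamma\lambda_k=\lambda_\ell-\tfrac12\lambda_k(A^k_\ell-\tA^k_\ell)$ and $\tlambda_j=\lambda_j$ for $j\neq\ell$. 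The main obstacle I anticipate is purely bookkeeping: carefully tracking, in the $i=k$ relator computation, how the mixed terms $\tx_k\tx_i$ ($i\neq k,\ell$) get rewritten via $\tx_k^2$-substitution and interact with the $\gamma\tx_k\tx_\ell$ cross term, so that the coefficient comparisons cleanly separate into the four listed families of equations without spurious extra constraints — in particular verifying that the constraint produced by the $j=\ell$ instance of "$A^k_j=\tfrac12(A^k_\ell+\tA^k_\ell)A^\ell_j$" is vacuous (both sides, after substituting $\gamma$, reduce to the already-derived relation $\tA^k_\ell=A^k_\ell-2\gamma$) and that strict upper-triangularity of $A,\tA$ is used exactly where needed to guarantee the relevant monomials are distinct basis elements.
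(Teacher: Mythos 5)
Your overall route is the same as the paper's: present $\hh^*(M)$ and $\hh^*(\tM)$ as in Lemma~\ref{cohomology}, check when the relators of $\hh^*(M)$ are sent into the relation ideal of $\hh^*(\tM)$, and read off the conditions by comparing coefficients in the square-free monomial basis of $\hh^4(\tM)$; your observation that well-definedness plus unipotence in degree $2$ already yields an isomorphism is also essentially the paper's closing remark. However, the coefficient bookkeeping you commit to in the crucial $i=k$ computation is misattributed, and as written the direction $(1)\Rightarrow(2)$ never produces the fourth displayed equation. Expanding $\Psi\big(x_k^2+\sum_j A^k_j x_kx_j\big)$ and substituting $\tx_k^2=-\sum_j\tA^k_j\tx_k\tx_j$ and $\tx_\ell^2=-\sum_j\tA^\ell_j\tx_\ell\tx_j$, the coefficient of $\tx_k\tx_j$ for $j\neq\ell$ is $A^k_j-\tA^k_j$; its vanishing gives the $i=k$ instance of the third family (which your plan otherwise leaves unproved, since your first computation only covers $i\neq k$), not $A^k_j=\frac{1}{2}\big(A^k_\ell+\tA^k_\ell\big)A^\ell_j$. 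The fourth family comes instead from the coefficients of $\tx_\ell\tx_j$, which equal $-\gamma\big(A^k_j-A^k_\ell\tA^\ell_j+\gamma\tA^\ell_j\big)$. Note the overall factor of $\gamma$: you may only divide by it after establishing $\gamma\neq 0$, and this is exactly where the hypothesis that $A$ and $\tA$ are \emph{distinct} enters (if $\gamma=0$, vanishing of all coefficients forces $A=\tA$). Your proposal never uses distinctness and never examines the $\tx_\ell\tx_j$ coefficients, so this step would fail as described.

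Relatedly, the second displayed family does not arise from any ``$\tx_k\tx_i\tx_\ell$-type'' contribution of the $i=k$ relator --- there are no degree-$6$ terms in play --- but is precisely the ``one linear constraint'' $\tA^i_\ell=A^i_\ell-\gamma A^i_k$ that you already extracted from the coefficient of $\tx_i\tx_\ell$ in the relators with $i\neq k$; you only need to insert $\gamma=\frac{1}{2}\big(A^k_\ell-\tA^k_\ell\big)$. With these corrections (and noting that $\tA^\ell_j=A^\ell_j$ for $j\neq\ell$ lets you replace $\tA^\ell_j$ by $A^\ell_j$ in the fourth equation), your computation closes up exactly as in the paper, and your treatment of the final assertion about $\sum_i\lambda_i x_i$ is fine.
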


\begin{proof}
Fix  $\gamma \in \Z$, and define a ring homomorphism
$$\Psi \colon \Z[x_1,\dots,x_n] \to \hh^*(\tM;\Z) =\Z[\tx_1,\dots,\tx_n]/
\big(\tx_i^2 + \sum_j \tA^i_j \tx_i \tx_i \big)$$
by $\Psi(x_k) = \tx_k - \gamma \tx_\ell$ and $\Psi(x_i) = \tx_i$ for all $i \neq k$.
Then
\begin{multline*}
\textstyle \Psi\left(x_k^2 + \sum_j A_j^k x_k x_j\right) = \Psi(x_k)^2 + \sum_j A_j^k \Psi(x_k) \Psi(x_j) \\
= \textstyle \left(\tx_k  -  \gamma \tx_\ell \right)^2  + \sum_j  A_j^k \left(\tx_k - \gamma \tx_\ell\right) \tx_j  \\
= \textstyle \tx_k^2  - 2 \gamma \tx_k \tx_\ell  
+ \sum_j  A_j^k \tx_k \tx_j 
+ \left( \gamma^2 - \gamma A_\ell^k \right) \tx_\ell^2   
- \sum_{j \neq \ell} \gamma A_j^k  \tx_\ell \tx_j 
 \\
= \textstyle -\sum_j \tA_j^k \tx_k \tx_j - 2 \gamma \tx_k \tx_\ell  
+ \sum_j A_j^k \tx_k \tx_j
- \sum_j \left( \gamma^2  - \gamma A^k_\ell \right) \tA^\ell_j \tx_\ell \tx_j 
 - \sum_{j \neq \ell} \gamma A_j^k \tx_\ell \tx_j 
\\
= \big(A_\ell^k - \tA_\ell^k - 2\gamma \big) \tx_k \tx_\ell  + \sum_{j \neq \ell} (A_j^k - \tA_j^k) \tx_k \tx_j - \sum_{j \neq \ell} \gamma \left(A_j^k  - A_\ell^k \tA^\ell_j + \gamma \tA^\ell_j\right) \tx_\ell \tx_j.
\end{multline*}
Moreover, for all $i \neq k$, we have
\begin{multline*}
\Psi(x_i^2 + \sum_j A_j^i x_i x_j) = \Psi(x_i)^2 + \sum_j A_j^i \Psi(x_i) \Psi(x_j) \\
= -\sum_j \tA_j^i \tx_i \tx_j  + \sum_{j \neq k} A_j^i \tx_i \tx_j + A_k^i \tx_i (\tx_k - \gamma \tx_\ell) \\
= \sum_{j \neq \ell} (A_j^i - \tA_j^i) \tx_i \tx_j + (A_\ell^i   - \tA_\ell^i - \gamma A_k^i )
\tx_i \tx_\ell.
\end{multline*}
This homomorphism $\Psi$ induces a homomorphism 
$$\hh^*(M)= \Z[x_1,\dots,x_n]/\big( x_i^2 + \sum_j A^i_j x_i x_j \big) \to \hh^*(\tM)$$ exactly if  the right hand sides of these equations vanish, or equivalently, if each term of these expressions vanish.
If $\gamma = 0$, this is  equivalent to $A = \tA$. Otherwise, it
is straightforward to check that these terms vanish exactly if $\gamma = \frac{1}{2} \big( A^k_\ell - \tA^k_\ell \big)$ and the equations in (2) are satisfied.
Finally, it's clear that in this case there's also a ring
homomorphism from $\hh^*(\tM)$ to $\hh^*(M)$ that takes 
$\tx_k$ to $x_k + \gamma x_\ell$ and $\tx_i$ to $x_i$ for 
all $i \neq k$; since this map is the inverse the homomorphism from 
$\hh^*(M)$ to $\hh^*(\tM)$ constructed above, both maps are isomorphisms.
(Alternatively,  the homomorphism $\hh^*(M) \to \hh^*(\tM)$  is a surjective map of free modules of the same dimension.)
\end{proof}

The following  observations will be useful later.

\begin{corollary}\label{c:ring}
Let  $M$ and $\tM$ be Bott manifolds satisfying the equivalent statements of Lemma~\ref{ring}.
Then 
\begin{gather*}
\tA_\ell^i = A_\ell^i \quad \forall \ i > k, \\
%\quad \mbox{and} \\
\tA_j^k = A_j^k = 0 \quad \forall \ j < \ell. 
\end{gather*}
\end{corollary}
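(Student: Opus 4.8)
The plan is to read off both identities directly from the four systems of equations constituting statement~(2) of Lemma~\ref{ring}, using only the strict upper-triangularity of $A$ and $\tA$ (so that $A^i_j = \tA^i_j = 0$ whenever $i \geq j$).

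For the claim $\tA^i_\ell = A^i_\ell$ when $i > k$: I would invoke the second equation of Lemma~\ref{ring}(2), namely $\tA^i_\ell = A^i_\ell - \frac{1}{2} A^i_k\big(A^k_\ell - \tA^k_\ell\big)$, which holds for every $i \neq k$. Since $i > k$ forces $i \geq k$, strict upper-triangularity gives $A^i_k = 0$, so the correction term disappears and $\tA^i_\ell = A^i_\ell$.

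For the claim $\tA^k_j = A^k_j = 0$ when $j < \ell$: the third equation of Lemma~\ref{ring}(2), $\tA^i_j = A^i_j$ for all $i$ and all $j \neq \ell$, specializes at $i = k$ to $\tA^k_j = A^k_j$ for every $j \neq \ell$, in particular for $j < \ell$; so it suffices to show $A^k_j = 0$. For that I would use the fourth equation, $A^k_j = \frac{1}{2}\big(A^k_\ell + \tA^k_\ell\big) A^\ell_j$, valid for $j \neq \ell$. When $j < \ell$ we have $\ell > j$, hence $A^\ell_j = 0$ by strict upper-triangularity, and therefore $A^k_j = 0$, whence also $\tA^k_j = 0$.

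Since the entire argument consists of substituting the triangularity relations into the equations of Lemma~\ref{ring}, there is no genuine obstacle here; the only point requiring care is keeping track of the triangularity convention and checking that the indices that occur ($i > k$ in the first case, $\ell > j$ in the second) are exactly the ones forcing the relevant matrix entries to vanish.
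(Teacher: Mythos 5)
Your proposal is correct and follows essentially the same route as the paper: both claims are read off from the equations of Lemma~\ref{ring}(2) combined with strict upper-triangularity ($A^i_k=0$ for $i>k$ in the first case, $A^\ell_j=0$ for $j<\ell$ in the second). The only difference is that you spell out explicitly the use of the third equation to get $\tA^k_j=A^k_j$, which the paper leaves implicit.
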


\begin{proof}
The matrix  $A$ is strictly upper triangular. 
Hence, the first claim follows from the facts
that $\tA_\ell^i  = A_\ell^i - \frac{1}{2} A^i_k \big(A^k_\ell - \tA^k_\ell \big)$ for all $i \neq k$, and $A^i_k = 0$ for all $i > k$.
Similarly, the next claim  follows from the facts
$\tA_j^k = A_j^k = \frac{1}{2} \big( A^k_\ell + \tA^k_\ell \big)  A^\ell_j$ 
for all $j \neq \ell$, and $A^\ell_j = 0$  for all $j < \ell$.
\end{proof}
\begin{remark}\label{equiv conditions c}
Let $A \in \mat$ be strictly upper-triangular,
and fix integers  $1 \leq k < \ell \leq n$.
Given an integer $c$, 
there exists a  distinct strictly upper-triangular  matrix $\tA \in \mat$
with $c = \frac{1}{2} \big( A_\ell^k + \tA_\ell^k \big)$
satisfying the statements in Lemma~\ref{ring} 
exactly if  $A_j^k = c A_j^\ell$ for all  $j \neq \ell$. 
In particular, there exist infinitely many such matrices exactly if $A_j^k = A_j^\ell=0$ for 
all $j \neq \ell$; c.f. Definition~\ref{exceptional def}.
\end{remark}
Our next result shows that if there is an isomorphism
of the type considered in Lemma~\ref{ring} between
the cohomology ring  of two Bott manifolds,
and  the ``more skew" polytope is a Bott polytope,
then the ``less skew" polytope is as well.
The converse if not true, as can be seen by considering
trapezoids.
\begin{proposition}\label{old main cor}
Let $M$ and $\tM$ be the Bott manifolds associated to
strictly upper triangular matrices $A$ and  $\tA$ in $\mat$, respectively.
Fix $\lambda, \tlambda \in \R^n$ and integers  $1 \leq k < \ell \leq n$.
Assume that there exists  an isomorphism from $\hh^*(M)$ to $\hh^*(\tM)$
that takes $x_k$ to $\tx_k - \frac{1}{2}\big( A_\ell^k - \tA_\ell^k \big)  \tx_\ell$,  $x_i$ to $\tx_i$ for all
$i \neq k$, and  $\sum \lambda_i x_i$ to $\sum \tlambda_i \tx_i$.
If $|A_\ell^k| \geq |\tA_\ell^k|$
and  $(A,\lambda)$ defines
a symplectic Bott manifold, 
then  $(\tA, \tlambda)$ also defines a symplectic Bott manifold.
\end{proposition}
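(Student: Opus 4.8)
The plan is to verify the criterion of Proposition~\ref{kahler condition} for $(\tA,\tlambda)$: writing $\tilde\Xi(I)$ for the quantities attached to $(\tA,\tlambda)$, it suffices to show that $\tilde\Xi(I)>0$ for every non-empty $I\subseteq\n$, given that $\Xi(I)>0$ for all such $I$. Set $c=\tfrac12(A^k_\ell+\tA^k_\ell)$ and $\gamma=\tfrac12(A^k_\ell-\tA^k_\ell)$, so that $A^k_\ell=c+\gamma$, $\tA^k_\ell=c-\gamma$, and the hypothesis $|A^k_\ell|\ge|\tA^k_\ell|$ is precisely the inequality $\gamma c\ge 0$. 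By Lemma~\ref{ring} and Corollary~\ref{c:ring}: $\tA$ and $A$ agree outside column $\ell$; $\tlambda$ and $\lambda$ agree outside position $\ell$, with $\tlambda_\ell=\lambda_\ell-\gamma\lambda_k$; $\tA^i_\ell=A^i_\ell-\gamma A^i_k$ for $i\ne k$ while $\tA^k_\ell=A^k_\ell-2\gamma$; $A^k_j=c\,A^\ell_j$ for $j\ne\ell$; and $A^k_j=0$ for $j<\ell$. Since $\Xi(I)$ involves only the $\lambda_i$ and the entries $A^i_j$ with $i,j\in I$, the first two facts immediately give $\tilde\Xi(I)=\Xi(I)>0$ whenever $\ell\notin I$. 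So assume $\ell\in I$ and set $M:=\max I$.

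Next I would use the vertex recursion from the proof of Proposition~\ref{kahler condition}: put $V_j=\langle p_I,e_j\rangle$, so that $V_j=\Xi([j]\cap I)$ for $j\in I$ and $V_j=0$ for $j\notin I$, with $V_j=\lambda_j-\sum_{i<j}A^i_j V_i$ for $j\in I$, and let $\tilde V_j$ be the analogous quantities for $\tilde\Delta:=\Delta(\tA,\tlambda)$. Using the relations above, an easy induction on $j$ gives $\tilde V_j=V_j$ for $j<\ell$, and a short computation at $j=\ell$ then gives $\tilde V_\ell=V_\ell+\varepsilon_I\,\gamma\,W$, where $W:=\lambda_k-\sum_{i<k}A^i_k V_i=\Xi\big(([k]\cap I)\cup\{k\}\big)$ and $\varepsilon_I:=1$ if $k\in I$, $\varepsilon_I:=-1$ if $k\notin I$. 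For $j>\ell$ one has $\tilde V_j-V_j=-\sum_{i<j}A^i_j(\tilde V_i-V_i)$, a linear recursion sourced at $\ell$, which solves to $\tilde V_j-V_j=(\tilde V_\ell-V_\ell)\,P(\ell\to j)$; here, for $a\le b$, $P(a\to b)$ denotes the signed path sum $\sum(-1)^m A^{i_0}_{i_1}\cdots A^{i_{m-1}}_{i_m}$ over chains $a=i_0<\dots<i_m=b$ with all $i_s\in I$, with $P(a\to a)=1$. Taking $j=M$ gives
\[
\tilde\Xi(I)=\Xi(I)+\varepsilon_I\,\gamma\,W\,P(\ell\to M).
\]
Note that $W>0$ and $V_\ell=\Xi([\ell]\cap I)>0$, since $([k]\cap I)\cup\{k\}$ and $[\ell]\cap I$ are non-empty and $(A,\lambda)$ is a symplectic Bott manifold.

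The key step is the identity $P(k\to M)=-\gamma\,P(\ell\to M)$, valid when $M>\ell$; I would prove it by grouping the chains from $k$ to $M$ according to their second vertex $i_1$, using $A^k_{i_1}=0$ for $k<i_1<\ell$, $A^k_\ell=c+\gamma$, $A^k_{i_1}=c\,A^\ell_{i_1}$ for $i_1>\ell$, and the recursion $P(\ell\to M)=-\sum_{j\in I,\ \ell<j\le M}A^\ell_j\,P(j\to M)$. Feeding this, and its analogue with paths taken in $I\cup\{k\}$, into the decomposition $\Xi(I')=\Xi(I'\setminus\{q\})+\Xi([q]\cap I')\,P(q\to\max I')$ (valid for $q\in I'$ with $q<\max I'$, obtained by splitting the chains defining $\Xi(I')$ according to how they meet $q$), I get: when $M>\ell$, the displayed expression for $\tilde\Xi(I)$ equals $\Xi(I\triangle\{k\})$, where $I\triangle\{k\}$ is $I\setminus\{k\}$ if $k\in I$ and $I\cup\{k\}$ if $k\notin I$; since $I\triangle\{k\}$ contains $\ell$ it is non-empty, so $\tilde\Xi(I)>0$ — and, notably, the hypothesis $|A^k_\ell|\ge|\tA^k_\ell|$ is not needed in this case. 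When $M=\ell$, instead $P(\ell\to M)=1$ and $P(k\to M)=-A^k_\ell=-(c+\gamma)$, and the same decomposition yields $\tilde\Xi(I)=\Xi(I\triangle\{k\})-\varepsilon_I\,c\,W$ and $\Xi(I)=\Xi(I\triangle\{k\})-\varepsilon_I\,(c+\gamma)\,W$, with $\Xi(I)$, $\Xi(I\triangle\{k\})$, and $W$ all positive. Positivity of $\tilde\Xi(I)$ then follows from a short case split on the sign of $c$: if $\varepsilon_I c\le 0$ then $\tilde\Xi(I)=\Xi(I\triangle\{k\})-\varepsilon_I c W\ge\Xi(I\triangle\{k\})>0$; if $\varepsilon_I c>0$, then $\gamma c\ge0$ forces $\varepsilon_I\gamma\ge0$, so $\Xi(I\triangle\{k\})=\Xi(I)+\varepsilon_I(c+\gamma)W>\varepsilon_I(c+\gamma)W\ge\varepsilon_I c W$, again giving $\tilde\Xi(I)>0$.

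I expect the main obstacle to be precisely the case $M=\max I=\ell$: this is the only place where the hypothesis $|A^k_\ell|\ge|\tA^k_\ell|$ enters, and it requires carrying along the auxiliary quantity $W$ and tracking the signs of $c$, $\gamma$, and $\varepsilon_I$ carefully. When $\max I>\ell$ the transformation is essentially invisible to the $\Xi$-invariants, in the clean form $\tilde\Xi(I)=\Xi(I\triangle\{k\})$; this is what makes the argument go through without further hypotheses there.
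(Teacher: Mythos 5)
Your proposal is correct, and it reaches the same overall reduction as the paper — verify the criterion of Proposition~\ref{kahler condition} by relating $\tXi$ to $\Xi$ — but it establishes the needed relations by a genuinely different route. The paper delegates all the work to Lemma~\ref{main prop}, which is proved by expanding $\Xi$ into the summands $\Lambda(J)$ and checking a list of identities among them; your argument instead runs the vertex recursion $V_j=\lambda_j-\sum_{i<j}A^i_jV_i$ from the proof of Proposition~\ref{kahler condition}, tracks the single perturbation $\tilde V_\ell-V_\ell=\varepsilon_I\,\gamma\,W$, and propagates it by the path-sum factorization together with the identity $P(k\to \max I)=-\gamma\,P(\ell\to \max I)$. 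The relations you obtain are exactly the content of Lemma~\ref{main prop}: your case $\ell\notin I$ is its part (a), your clean swap $\tXi(I)=\Xi(I\,\triangle\,\{k\})$ for $\max I>\ell$ is its part (c), and your formula $\tXi(I)=\Xi(I)+\varepsilon_I\gamma W$ (with $W=\Theta$) for $\max I=\ell$ is its part (b); your closing case split on the signs of $c$, $\gamma$, $\varepsilon_I$ is equivalent to the paper's observation that $|A_\ell^k|\ge|\tA_\ell^k|$ forces one of $\tA^k_\ell\pm A^k_\ell\ge 0$ and one of $-\tA^k_\ell\pm A^k_\ell\ge 0$. What your derivation buys is a more geometric and arguably shorter proof of the key identities (the vertex coordinates make it transparent that only sets containing $\ell$ are affected, and isolate where the hypothesis is used); what the paper's $\Lambda$-summand bookkeeping buys is the finer statement of Lemma~\ref{main prop}, which is reused verbatim in Lemma~\ref{technical} and hence in the construction of the symplectomorphisms, so it cannot be dispensed with elsewhere. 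One small stylistic caveat: you overload $M$ for both the manifold and $\max I$, and your $W$ is the paper's $\Theta$; otherwise the argument is complete.
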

\begin{proof}
By Proposition~\ref{kahler condition} it suffices to show that $\Xi(I) > 0$ for every non-empty subset $I \subset \n$, assuming that such inequalities hold for $\tXi$. This follows straightforwardly from Lemma \ref{main prop} below and the following observation: if $|A_\ell^k| \geq |\tA_\ell^k|$, then either
$\tA^k_\ell + A^k_\ell \geq 0$ or
$\tA^k_\ell - A^k_\ell \geq 0$; similarly, either
$-\tA^k_\ell + A^k_\ell \geq 0$ or
$-\tA^k_\ell - A^k_\ell \geq 0$.
\end{proof}

\begin{lemma}\label{main prop}
Let $M$ and $\tM$ be the Bott manifolds associated to
strictly upper triangular matrices $A$ and  $\tA$ in $\mat$, respectively.
Fix $\lambda, \tlambda \in \R^n$ and integers $1 \leq k < \ell \leq n$.
Assume that there exists an isomorphism from $\hh^*(M)$ to $\hh^*(\tM)$
that takes $x_k$ to $\tx_k - \frac{1}{2}\big(A_\ell^k - \tA_\ell^k\big) \tx_\ell$,  $x_i$ to $\tx_i$ for all
$i \neq k$, and  $\sum \lambda_i x_i$ to $\sum \tlambda_i \tx_i$.
Let $\Xi$ and $\tXi$ be associated to $(A,\lambda)$ and $(\tA,\tlambda)$, respectively,\footnote{To simplify exposition, we adopt the conventions that $\Xi(\emptyset) = \tXi(\emptyset)$ and $\max (\emptyset)=-\infty$.} 
as in Proposition~\ref{kahler condition}.
Then for any subset $I \subseteq \n \setminus \{k,\ell\}$: 
%Define $\Xi$ as in Lemma~\ref{kahler condition}, and define $\tXi$ 
%analogously.
\begin{itemize}
\item [(a)] 
$\Xi(I) = \tXi(I)$ and $\Xi(I \cup \{k\}) = \tXi(I \cup \{k\}).$
\item [(b)] If  $\max(I) < \ell$, 
let\footnote{
By Claim (a), $\Xi(\k \cap I \cup \{k\}) =   \tXi(\k \cap I \cup \{k\})$.}
 $\Theta:=  \Xi(\k \cap I \cup \{k\})$. Then
\begin{multline*}
\textstyle
\Xi(I \cup \{\ell\}) - \frac{ A_\ell^k \Theta}{2} = \Xi(I \cup \{k,\ell\}) + \frac{A_\ell^k \Theta}{2} \\
= 
\textstyle
\tXi(I \cup \{\ell\}) - \frac{\tA_\ell^k \Theta}{2} = \tXi(I \cup \{k,\ell\}) + \frac{\tA_\ell^k \Theta}{2}.
\end{multline*}
\item [(c)] If  $\max(I) > \ell$, then
$$\Xi(I \cup \{\ell\}) = \tXi(I \cup \{k,\ell\}) \mbox{ and } 
\Xi(I \cup \{k,\ell\}) = \tXi(I \cup \{\ell\}).$$ 
\end{itemize}
\end{lemma}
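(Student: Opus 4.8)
The plan is to compute both sides of each identity directly from the explicit formula
$$\Xi(I) = \sum_{i_0 < \dots < i_m = \max(I),\ \{i_0,\dots,i_m\} \subseteq I} (-1)^m \lambda_{i_0} A^{i_0}_{i_1} \cdots A^{i_{m-1}}_{i_m},$$
substituting the relations between $(A,\lambda)$ and $(\tA,\tlambda)$ supplied by Lemma~\ref{ring} and its Corollary~\ref{c:ring}. Since $I \subseteq \n \setminus \{k,\ell\}$, the sets appearing are built from $I$ together with possibly $k$, possibly $\ell$, or both, and the key structural fact (Corollary~\ref{c:ring}) is that $\tA^i_j = A^i_j$ whenever $j \neq \ell$, that $\tA^i_\ell = A^i_\ell$ for $i > k$, and that $A^k_j = \tA^k_j = 0$ for $j < \ell$, while $A^i_\ell - \tA^i_\ell = \frac{1}{2} A^i_k (A^k_\ell - \tA^k_\ell)$ for $i \neq k$ and $\tlambda_\ell - \lambda_\ell = -\frac{1}{2}\lambda_k(A^k_\ell - \tA^k_\ell)$, with $\lambda_j = \tlambda_j$ for $j \neq \ell$.

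For part (a): any chain contributing to $\Xi(I)$ or $\Xi(I \cup \{k\})$ never uses $\ell$ as an intermediate or terminal index (because $\ell \notin I$), and any factor $A^{i_{s-1}}_{i_s}$ in such a chain has $i_s \neq \ell$, hence equals $\tA^{i_{s-1}}_{i_s}$; similarly the leading coefficient $\lambda_{i_0}$ has $i_0 \neq \ell$ so equals $\tlambda_{i_0}$. Thus the two expressions agree term by term.

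For part (b), with $\max(I) < \ell$: I would split $\Xi(I \cup \{\ell\})$ according to whether the chain's predecessor of $\ell$ is $k$ or lies in $I$ (it cannot be $\ell$ itself, and $\ell = \max(I \cup \{\ell\})$ so $\ell$ is always the terminal index). Because $A^k_j = 0$ for all $j < \ell$ — in particular for all $j \in I$ — once a chain passes through $k$ its \emph{next} step must be directly to $\ell$; this forces the ``$k$-passing'' chains in $\Xi(I \cup \{\ell\})$ to have the shape (chain in $I \cup \{k\}$ ending at $k$)$\cdot A^k_\ell$, contributing exactly $-\Theta A^k_\ell$ where $\Theta = \Xi(\k \cap I \cup \{k\}) = \Xi(I \cup \{k\})$ (the last equality because $\max(I) < \ell$ and everything in $I \cup \{k\}$ is $\leq k$... more carefully, $\max(I \cup \{k\}) = k$ when $\max I < k$, and $\k \cap (I \cup \{k\}) = I \cup \{k\}$ under $\max I < \ell$; I will phrase this via the stated footnote). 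The ``non-$k$'' chains contribute some quantity $\Xi^{\circ}$ depending only on $A^i_j$ with $j \neq \ell$ or with $i, j \in I \cup \{\ell\}$, i.e. only on data that is identical for $A$ and $\tA$, \emph{except} possibly the coefficient $\lambda_\ell$ if the chain is the singleton $\{\ell\}$, and the edges $A^i_\ell$ for $i \in I$. Here I use $\lambda_\ell - \tlambda_\ell = \frac{1}{2}\lambda_k(\tA^k_\ell - A^k_\ell)$ and $A^i_\ell - \tA^i_\ell = \frac12 A^i_k(A^k_\ell - \tA^k_\ell)$; tracking these against the chains that go ...$\to i \to \ell$ with $i \in I$ (each such chain factors as (chain ending at $i$)$\cdot A^i_\ell$, and (chain ending at $i$) has leading index in $I$, hence same $\lambda$) will produce exactly the discrepancy $-\frac12(A^k_\ell - \tA^k_\ell)\Theta$ after recognizing that $\sum$(chains in $I\cup\{k\}$ ending at $i$ then times $A^i_k$, summed over $i$, plus the singleton $\lambda_k$) reassembles $\Theta$. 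So $\Xi(I \cup \{\ell\}) + \frac{A^k_\ell \Theta}{2} = \tXi(I \cup \{\ell\}) + \frac{\tA^k_\ell \Theta}{2}$, and subtracting $A^k_\ell\Theta$ (resp. $\tA^k_\ell \Theta$) gives the ``$-\frac{A^k_\ell\Theta}{2}$'' form. The identity $\Xi(I \cup \{k,\ell\}) = \Xi(I\cup\{\ell\}) - A^k_\ell\Theta$ itself follows by the same ``$k$-passing vs. not'' split applied to $I \cup \{k, \ell\}$: now chains may or may not contain $k$, those that do still must jump $k \to \ell$, those that do not coincide with chains for $I \cup \{\ell\}$; hence $\Xi(I\cup\{k,\ell\}) = \Xi(I\cup\{\ell\}) + (-1)\cdot(\text{chains through }k)$, and the through-$k$ part is $-A^k_\ell\Theta$ wait — sign bookkeeping: the chain ...$\to k \to \ell$ contributes with an extra factor $A^k_\ell$ and an extra $(-1)$, so equals $-A^k_\ell\cdot(\text{chain ending at }k)$, summing to $-A^k_\ell\Theta$; thus $\Xi(I\cup\{k,\ell\}) = \Xi(I\cup\{\ell\}) - A^k_\ell\Theta$, consistent with the claimed $\Xi(I\cup\{\ell\}) - \frac{A^k_\ell\Theta}{2} = \Xi(I\cup\{k,\ell\}) + \frac{A^k_\ell\Theta}{2}$. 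The $\tA$ side is identical.

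For part (c), with $\max(I) > \ell$: now $\ell$ is never the terminal index of a chain; set $r = \max(I)$, the common terminal index. I would again split chains in $I \cup \{\ell\}$ (resp. $I \cup \{k,\ell\}$) by whether they pass through $\ell$. Chains avoiding $\ell$ are the same for both sets and use no $A^i_j$ with $j = \ell$... but they might use $A^k_j$; however $k \notin I \cup \{\ell\}$ versus $k \in I \cup \{k,\ell\}$, so I must be careful: in $\Xi(I \cup \{\ell\})$ chains cannot use $k$ at all. The clean statement to chase is: $\Xi(I \cup \{k,\ell\})$ versus $\tXi(I \cup \{\ell\})$. A chain in $I \cup \{k,\ell\}$ for $\Xi$: if it uses $k$, the step out of $k$ is forced to $\ell$ (as $A^k_j = 0$ for $j < \ell$, and a chain reaching $k$ then continuing must go to something $> k$; the only index $> k$ with $A^k_{\cdot} \neq 0$ allowed... actually $A^k_j$ can be nonzero for $j > \ell$ too — wait, Corollary~\ref{c:ring} only gives $A^k_j = 0$ for $j < \ell$). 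Hmm, so the step out of $k$ goes to some $j \geq \ell$, and by Lemma~\ref{ring}'s relation $A^k_j = \frac12(A^k_\ell + \tA^k_\ell)A^\ell_j$ for $j \neq \ell$. This is the crux: I would substitute this to rewrite every ``through-$k$, then to $j > \ell$'' chain in terms of an ``$\ell \to j$'' chain times the scalar $\frac12(A^k_\ell+\tA^k_\ell)$, and separately handle the ``$k \to \ell$'' chains; combining with the $\lambda$ and $A^i_\ell$ corrections for chains entering $\ell$ from $I$, everything should collapse to exactly $\tXi(I \cup \{\ell\})$. This part (c) computation is the main obstacle: it involves re-indexing chains that pass through $k$ into chains that pass through $\ell$ via the quadratic relation $A^k_j = \frac12(A^k_\ell + \tA^k_\ell) A^\ell_j$, and keeping the alternating signs and the leading-$\lambda$ corrections aligned requires care. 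I expect that organizing the sum by "the portion of the chain before it first reaches $\{k,\ell\}$" times "the portion after" — a convolution/factorization of $\Xi$ — will make both (b) and (c) fall out, and I would state and prove such a factorization lemma (essentially: $\Xi(I)$ restricted to chains whose second index is a fixed $s$ equals $\lambda_{i_0}\cdots \cdot(\text{something})\cdot A^{\cdot}_s \cdot \widehat\Xi_s(I)$, where $\widehat\Xi_s$ is the analogous sum starting at $s$) as the single combinatorial tool, then feed in the Lemma~\ref{ring} relations.
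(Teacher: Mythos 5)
Your overall strategy is the paper's: expand $\Xi$ into the individual monomials indexed by subsets (your ``chains'') and push the relations of Lemma~\ref{ring} and Corollary~\ref{c:ring} through term by term. Part (a) is fine. The genuine gap is part (c), which you yourself flag as ``the main obstacle'' and then leave as an expectation (``everything should collapse'', ``I would state and prove such a factorization lemma'') without ever verifying the per\-monomial identity that makes the collapse happen. Concretely, writing $\Lambda(J)=(-1)^m\lambda_{i_0}A^{i_0}_{i_1}\cdots A^{i_{m-1}}_{i_m}$ for $J=\{i_0<\dots<i_m\}$ (and $\tLambda$ analogously), what is needed when $\max(I)>\ell$ is, for every $J\subseteq I$ containing $\max(I)$,
\[
\Lambda(J\cup\{k\})=\tLambda(J\cup\{k\})=-\tfrac12\Lambda(J\cup\{k,\ell\})-\tfrac12\tLambda(J\cup\{k,\ell\}),
\]
which encodes exactly your re-indexing $A^k_j=\tfrac12\big(A^k_\ell+\tA^k_\ell\big)A^\ell_j$ for $j>\ell$ (all terms vanish when $J$ meets $\{k+1,\dots,\ell-1\}$, since $A^k_j=\tA^k_j=0$ for $j<\ell$), together with
\[
\Lambda(J\cup\{\ell\})+\tfrac12\Lambda(J\cup\{k,\ell\})=\tLambda(J\cup\{\ell\})+\tfrac12\tLambda(J\cup\{k,\ell\}),
\]
which comes from $\tlambda_\ell=\lambda_\ell-\tfrac12\lambda_k\big(A^k_\ell-\tA^k_\ell\big)$ and $\tA^i_\ell=A^i_\ell-\tfrac12A^i_k\big(A^k_\ell-\tA^k_\ell\big)$. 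Combining these with $\Lambda(J)=\tLambda(J)$ and summing over $J$ gives both equalities in (c) in two lines; this is precisely the paper's argument. Since you stop short of proving these identities, the hardest third of the lemma is missing.

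In part (b) your split into ``through-$k$'' and ``non-$k$'' chains is correct and matches the paper, and $\Xi(I\cup\{k,\ell\})=\Xi(I\cup\{\ell\})-A^k_\ell\,\Theta$ is right; but the cross identity you land on, $\Xi(I\cup\{\ell\})+\tfrac{A^k_\ell\Theta}{2}=\tXi(I\cup\{\ell\})+\tfrac{\tA^k_\ell\Theta}{2}$, has the wrong sign. The corrections $\lambda_\ell-\tlambda_\ell=\tfrac12\lambda_k\big(A^k_\ell-\tA^k_\ell\big)$ and $A^i_\ell-\tA^i_\ell=\tfrac12A^i_k\big(A^k_\ell-\tA^k_\ell\big)$ give directly $\Xi(I\cup\{\ell\})-\tXi(I\cup\{\ell\})=\tfrac12\big(A^k_\ell-\tA^k_\ell\big)\Theta$, i.e.\ the stated $\Xi(I\cup\{\ell\})-\tfrac{A^k_\ell\Theta}{2}=\tXi(I\cup\{\ell\})-\tfrac{\tA^k_\ell\Theta}{2}$, with no further manipulation; your subsequent repair --- ``subtracting $A^k_\ell\Theta$ (resp.\ $\tA^k_\ell\Theta$)'' from the two sides of a single equation --- is not a legitimate step, since those quantities differ. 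This is fixable, because the computation you sketch is sound, but as written (b) is internally inconsistent. Finally, drop the parenthetical $\Theta=\Xi(I\cup\{k\})$: it fails whenever $\max(I)>k$; the footnoted $\Theta=\Xi(\k\cap I\cup\{k\})$ is the quantity that the through-$k$ chains actually assemble into, as you eventually say.
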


\begin{proof}
By Lemma~\ref{ring} and Corollary~\ref{c:ring},  all the displayed equations
in both results hold.
Before proceeding further, define\footnote{To simplify exposition,
we adopt the that convention $\Lambda(\emptyset) = \tLambda(\emptyset).$}
$$
\Lambda(J)  := (-1)^{m}  \lambda_{i_0} A^{i_0}_{i_1}  \dots A^{i_{m-1}}_{i_m}$$
for  any non-empty subset  $J = \{i_0,\dots,i_m \} \subseteq \n$
with $i_0 < \dots < i_m$.  By definition, for any non-empty
subset $I \subseteq \n$, 
$$ \Xi(I) = \sum_{\max(I)  \in J \subseteq I} \Lambda(J).$$
Define $\tLambda$ analogously.
Now fix $J \subseteq \n \setminus \{k,\ell\}$.
Then
\begin{equation}\labell{nol}
\Lambda(J) = \tLambda(J)  \mbox{ and } \Lambda(J \cup \{k\}) = \tLambda(J \cup \{k\})
\end{equation}
because $\tA^i_j = A^i_j$ and $\tlambda_j = \lambda_j$ for all $i$ and 
$j \neq \ell$.
Moreover, because also $ \tA_\ell^i = A^i_\ell$ for all $i > k$,
\begin{multline}\labell{equalB}
\Lambda(J \cup \{\ell\}) = \tLambda(J \cup \{\ell\}) \mbox{ and }
\Lambda(J \cup \{k,\ell\}) = \tLambda(J \cup \{k, \ell\})  \\
\mbox{ if } J \cap \{k+1,\dots,\ell -1 \} \neq \emptyset.
\end{multline}
Similarly, the fact that $\tA^k_j = A_j^k = 0$ for all $j < \ell$ implies that
\begin{multline}\label{zero}
 \Lambda(J \cup \{k\} ) =  \tLambda(J \cup \{k\}) =
  \Lambda(J \cup \{k,\ell\} ) =  \tLambda(J \cup \{k,\ell\}) = 0 \\
\mbox{ if }  J \cap \{k+1,\dots,\ell-1\} \neq \emptyset.
\end{multline}
Moreover, by definition, 
\begin{multline}\label{kl}
\Lambda(J \cup \{k,\ell\}) = - A^k_\ell \, \Lambda(J \cup \{k\}) 
\mbox{ and } 
\tLambda(J \cup \{k, \ell\}) = - \tA^k_\ell \, \tLambda(J \cup \{k\})   \\
\mbox{ if }\max(J) < k.
\end{multline}
Next, we claim that
\begin{multline}\label{lkl}
\textstyle
\Lambda(J \cup \{\ell\} ) + \frac{1}{2} \Lambda(J \cup \{k,\ell\}) 
= \tLambda(J\cup \{\ell\}) + \frac{1}{2} \tLambda(J \cup \{k,\ell\}).
\end{multline}
If $J  \cap \{k+1,\dots,\ell-1\} \neq \emptyset$, 
this follows from \eqref{equalB}.
Otherwise,  since $\tA^i_j = A^i_j$ and $\tlambda_j = \lambda_j$ for all $i$ and $j \neq \ell$, depending on whether or not $\ell$ is smaller than every  element of $J$, it
either  follows from the fact that
$\tlambda_\ell = \lambda_\ell- \frac{1}{2} \tlambda_k\big( A^k_\ell - \tA_\ell^k\big)$, or from the fact that
 $\tA^i_\ell = A^i_\ell - \frac{1}{2} \tA^i_k \big(A^k_\ell - \tA_\ell^k \big) $ for all $i \neq k$.
Finally,  we claim that
\begin{multline}\label{kkl}
\textstyle
\Lambda(J \cup \{k\})  = \tLambda(J \cup \{k\}) =
-\frac{1}{2}\Lambda(J \cup \{k, \ell\}) - \frac{1}{2}\tLambda(J \cup \{k, \ell\} )  \\
\mbox{ if }  \max(J)> \ell.
\end{multline}
Indeed, if $J \cap \{k+1,\dots,\ell-1\} \neq \emptyset$, every term of this equation vanishes by \eqref{zero}.
Otherwise, it follows from the facts that
$\tlambda_k = \lambda_k$,  and
$\tA_j^i  = A_j^i$ and
$A_j^k =  \frac{1}{2} \big(A_\ell^k + \tA_\ell^k \big) A_j^\ell$
for all $i$ and $j \neq \ell$, and so
$A_j^k = \tA_j^k = \frac{1}{2} A_\ell^k A^\ell_j = \frac{1}{2} \tA^k_\ell \tA^\ell_j$.

We are now ready to prove the main claims.
So fix a subset $I \subseteq \n \setminus \{k,\ell\}$.

First, claim (a) follows immediately from \eqref{nol}.

Next, assume that  $\max (I)<\ell$.
By \eqref{zero} and \eqref{kl}, 
%\begin{multline}\label{xikl1}
%\Xi(I \cup \{k\})  = \sum_{\ell \in J \subseteq I}\Big( \Lambda(J) + 
%\Lambda(J \cup \{k\}) \Big) \\
%=   \sum_{\ell \in J \subseteq I} \Lambda(J)  
%+ \sum_{J \subseteq  \k \cap I} \Lambda(J \cup \{k\}\cup \{l\}) 
%= \Xi(I)  - A^k_l \, \Xi(\k \cap I \cup \{k\}).
%\end{multline}
\begin{multline}\label{xikl1}
\Xi(I \cup \{k,\ell\})  = \sum_{J \subseteq I}\Big( \Lambda(J \cup \{\ell\}) + 
\Lambda(J \cup \{k, \ell\}) \Big) \\
=   \sum_{J \subseteq I} \Lambda(J \cup \{\ell\})  
- \sum_{J \subseteq  \k \cap I} A_\ell^k \Lambda(J \cup \{k\}) 
= \Xi(I \cup \{\ell\} )  - A^k_l \, \Xi(\k \cap I \cup \{k\}).
\end{multline}
Similarly, 
\begin{equation}\label{xikl2}
\tXi(I \cup \{k,\ell\})   
= \tXi(I \cup \{\ell\})  - \tA^k_l \, \tXi(\k \cap I \cup \{k\}).
\end{equation}
Moreover, by \eqref{lkl},
\begin{multline} \label{ximixed}
\Xi(I \cup \{\ell\} ) + \Xi(I \cup \{k,\ell\}) 
= \sum_{J \subseteq I}
\big(2 \Lambda(J \cup \{\ell\}) + \Lambda(J \cup \{k,\ell\}) \big)  \\
= \sum_{J \subseteq I}
\big(2 \tLambda(J \cup\{\ell\}) + \tLambda(J \cup \{k, \ell\})\big)  =
\tXi(I \cup \{\ell\}) + \tXi(I \cup \{k,\ell\}) 
\end{multline}
Claim (b) follows  immediately from
Claim (a) and equations  \eqref{xikl1}, \eqref{xikl2}, and \eqref{ximixed}.

Finally, assume that  $i = \max(I) > \ell$.
By \eqref{nol}, \eqref{lkl}, and  \eqref{kkl}, 
\begin{multline*}
\Xi(I)=  \sum_{i \in J \subseteq I}
\Big(\Lambda(J) + \Lambda(J \cup \{\ell\})\Big) \\
 = \sum_{i \in J \subseteq I } \textstyle \Big( \tLambda(J) + \tLambda(J \cup \{\ell\}) +\frac 1 2 \tLambda(J \cup \{k,\ell\})-\frac 1 2 \Lambda(J \cup \{k,\ell\})\Big)
\\
%= \sum_{i \in J \subseteq I } \textstyle \Big( \tLambda(J) + \tLambda(J \cup \{\ell\}) + \tLambda(J \cup \{k,\ell\})+\frac 1 2  \tLambda(J \cup \{k\})+\frac 1 2  \Lambda(J \cup \{k\}
%\Big)
%\\
= \sum_{i \in J \subseteq I }\Big( \tLambda(J) + \tLambda(J \cup \{k\})  
+ \tLambda(J \cup \{\ell\}) + \tLambda(J \cup \{k,\ell\}) \Big) \\
=\tXi(I \cup \{k\}).
\end{multline*}
By a similar argument
$
 \Xi(I \cup \{k\}) 
= \tXi(I);
$
this proves claim $(c)$.
\end{proof}

We are now ready to give a sequence of results culminating in the main result of this section, Proposition \ref{keep}. We begin with the following technical consequence of the above lemma.
\begin{lemma}\label{technical}
Let $M$ and $\tM$ be the Bott manifolds associated to
distinct strictly upper triangular  matrices $A$ and $\tA$  in $M_n(\Z)$, respectively.
Fix $\lambda, \tlambda \in \R^n$ and integers $1 \leq k < \ell \leq n$.
Assume that there exists  
an isomorphism from $\hh^*(M)$ to $\hh^*(\tM)$
that sends $x_k$ to $\tx_k - \frac{1}{2}(A_\ell^k - \tA_\ell^k ) \tx_\ell$,  $x_i$ to $\tx_i$ for all
$i \neq k$, and  $\sum \lambda_i x_i$ to $\sum \tlambda_i \tx_i$.
If $(\tA,\tlambda)$ defines a symplectic Bott manifold, then
\begin{equation}\label{extendposn}
\lambda_\ell - \big\langle p,  \sum_{i \neq k} A_\ell^i e_i \big\rangle 
 > \frac{1}{2}\big( A_\ell^k + \tA_\ell^k \big)
\big( \lambda_k -  \big\langle
p,  \sum_i A^i_k e_i \big \rangle \big) 
\end{equation}
for each  $p \in \R^n$ that satisfies the inequalities
\begin{equation}\label{inequalities}
0 \leq  \langle p, e_j \rangle  \mbox{ and }
\big\langle p, e_j + \sum_i A_j^i e_i \big \rangle \leq \lambda_j \mbox{
for all }j \in \l \setminus \{k,\ell \}.
\end{equation}
\end{lemma}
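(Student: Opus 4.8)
The plan is to deduce the inequality \eqref{extendposn} from the positivity of $\tXi$ on non-empty subsets, which holds by Proposition~\ref{kahler condition} since $(\tA,\tlambda)$ defines a symplectic Bott manifold. The key idea is that the two sides of \eqref{extendposn}, viewed as affine functions of $p$, are controlled by the values $\tXi(I \cup \{\ell\})$ and $\tXi(I \cup \{k,\ell\})$ as $p$ ranges over the vertices of the polytope cut out by \eqref{inequalities} together with the coordinate hyperplanes indexed by $k$ and $\ell$. Concretely, I would first reduce to the case where $p$ is a vertex of the polytope $\Delta'$ in the variables $\{p_j : j \in \l \setminus \{k,\ell\}\}$ defined by \eqref{inequalities}: both sides of \eqref{extendposn} are affine in those coordinates once we note that, after substituting, the left side is $\lambda_\ell - \langle p, \sum_{i\neq k} A^i_\ell e_i\rangle$ and the right side involves only the entries $A^i_k$ with $i \in \l \setminus \{k,\ell\}$ (recall $A^i_k = 0$ for $i \ge k$ since $A$ is strictly upper triangular, and the $i = \ell$ term is absent because $A^\ell_k = 0$). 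Since a strict affine inequality over a polytope holds everywhere iff it holds at every vertex, it suffices to check \eqref{extendposn} at the vertices $p = p_I$ associated to subsets $I \subseteq \l \setminus \{k,\ell\}$, using the description of vertices from the proof of Proposition~\ref{kahler condition}.

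Next, at such a vertex $p_I$, formula \eqref{Xi1} from the proof of Proposition~\ref{kahler condition} (applied to the matrix $A$ restricted to the index set $\l \setminus \{k,\ell\}$) should identify $\langle p_I, e_j\rangle$ with the relevant $\Xi$-type quantity, and hence identify the left-hand side $\lambda_\ell - \langle p_I, \sum_{i\neq k} A^i_\ell e_i\rangle$ with $\Xi(I \cup \{\ell\})$ (or more precisely with $\Xi(I' \cup \{\ell\})$ where $I' = I \cap [\ell]$, using that $A^i_\ell = 0$ for $i \ge \ell$), and the bracketed factor $\lambda_k - \langle p_I, \sum_i A^i_k e_i\rangle$ with $\Xi(\k \cap I \cup \{k\}) = \Theta$ in the notation of Lemma~\ref{main prop}(b). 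Thus \eqref{extendposn} becomes exactly
$$\Xi(I \cup \{\ell\}) - \tfrac{1}{2}\big(A^k_\ell + \tA^k_\ell\big)\,\Theta > 0,$$
and I would then invoke Lemma~\ref{main prop}(b) (in the case $\max(I) < \ell$, which holds since $I \subseteq \l \setminus \{k,\ell\}$ and we only get a nontrivial $e_\ell$-contribution from indices $< \ell$): that lemma gives
$$\Xi(I \cup \{\ell\}) - \tfrac{A^k_\ell \Theta}{2} = \tXi(I \cup \{\ell\}) - \tfrac{\tA^k_\ell \Theta}{2},$$
so $\Xi(I\cup\{\ell\}) - \tfrac{1}{2}(A^k_\ell + \tA^k_\ell)\Theta = \tXi(I\cup\{\ell\}) - \tA^k_\ell\Theta = \tXi(I\cup\{k,\ell\})$ by \eqref{xikl2} with the matrix $\tA$. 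Since $I \cup \{k,\ell\}$ is non-empty, $\tXi(I \cup \{k,\ell\}) > 0$ by Proposition~\ref{kahler condition}, which is precisely \eqref{extendposn} at $p = p_I$.

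The main obstacle I anticipate is bookkeeping rather than conceptual: carefully matching the affine functions of $p$ appearing in \eqref{extendposn} with the combinatorial sums $\Xi$, $\tXi$, and $\Theta$, keeping track of which index sets get intersected with $[\ell]$ or $[k]$, and confirming that the vertices of the polytope defined by \eqref{inequalities} are exactly the points $p_I$ for $I \subseteq \l\setminus\{k,\ell\}$ even though we have dropped the constraints indexed by $k$ and $\ell$ (this uses that, among the remaining indices, the matrix $A$ is still strictly upper triangular, so the argument of Proposition~\ref{kahler condition} applies verbatim to the smaller system, and no positivity hypothesis on $\Xi$ for the full index set is needed because we only use \eqref{Xi1}, not Bott-polytope-ness). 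A secondary subtlety is verifying that the left side of \eqref{extendposn} genuinely does not involve $p_k$ or $p_\ell$ in a way that spoils affineness over $\Delta'$ — but $A^\ell_\ell = 0$ and the sum $\sum_{i\neq k} A^i_\ell e_i$ only pairs with $e_i$ for $i < \ell$, $i \neq k$, so the left side depends only on the coordinates $\langle p, e_i\rangle$ with $i \in \l \setminus \{k,\ell\}$, as needed.
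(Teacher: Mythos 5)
Your overall route is exactly the paper's: reduce \eqref{extendposn} to the vertices of the region cut out by \eqref{inequalities}, identify the two sides at a vertex $p_I$ with $\Xi(I \cup \{\ell\})$ and $\Theta = \Xi(\k \cap I \cup \{k\})$ via the triangular recursion \eqref{Xi1}, and conclude from Lemma~\ref{main prop}(b) together with $\tXi(I \cup \{k,\ell\}) > 0$, which holds by Proposition~\ref{kahler condition} applied to $(\tA,\tlambda)$. That chain of identities is the paper's computation and is correct. One citation you should add: the reduction also needs $A^k_j = 0$ for all $j < \ell$ (Corollary~\ref{c:ring}, which uses the ring isomorphism hypothesis), both so that the constraints in \eqref{inequalities} genuinely do not involve $p_k$ and so that the induction giving $\langle p_I, e_j \rangle = \Xi(\j \cap I)$ goes through; strict upper triangularity ``among the remaining indices'' alone does not give this.

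The one genuine flaw is your parenthetical claim that identifying the vertices of this region with the points $p_I$, $I \subseteq \l \setminus \{k,\ell\}$, requires ``no positivity hypothesis on $\Xi$'' because ``we only use \eqref{Xi1}.'' That justification is false: without positivity a vertex need not be of the form $p_I$, and then checking \eqref{extendposn} only at the $p_I$ would not suffice. For example, in the two-index system $0 \leq p_1 \leq \lambda_1$, $0 \leq p_2$, $p_2 + A^1_2 p_1 \leq \lambda_2$ with $A^1_2 > 0$ and $\lambda_2 < A^1_2 \lambda_1$, the region is a triangle with a vertex at $(\lambda_2/A^1_2, 0)$, where both constraints of index $2$ and neither constraint of index $1$ are active; this vertex is not any $p_I$. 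What rescues the step (and what the paper itself uses implicitly when it asserts the form of the minimal faces) is that for every nonempty $I \subseteq \l \setminus \{k,\ell\}$ one has $\Xi(I) = \tXi(I) > 0$: the equality by Lemma~\ref{main prop}(a) (or directly, since $A$ and $\tA$ and $\lambda$ and $\tlambda$ agree in all columns $j \neq \ell$), and the positivity by Proposition~\ref{kahler condition} applied to $(\tA,\tlambda)$. With that positivity, the ``converse'' half of the proof of Proposition~\ref{kahler condition}, applied to the reduced system on the index set $\l \setminus \{k,\ell\}$, shows the vertices are exactly the $p_I$, and your argument then goes through as written.
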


\begin{proof}
Both sides of the inequality in \eqref{extendposn}
are linear functions on $\R^n$ 
Hence, to show that \eqref{extendposn} holds  for all points in 
the polyhedron defined by the inequalities in \eqref{inequalities},
it is enough to prove it holds on the minimal faces of that polyhedron.
So assume that $p$ lies on such a minimal face.
Then there exists $I \subseteq \l  \setminus \{k,\ell\}$ such that $p \in \R^n$ satisfies
$$\big\langle p, e_j  \big\rangle = \begin{cases}
\lambda_j - \big \langle p, \sum A_j^i e_i \big \rangle &  \mbox{ for all } j \in I  \\
0 & \mbox{ for all other } j \in  \l \setminus \{k,\ell\}.
\end{cases}$$
Since $A$ is strictly upper triangular, $A^i_j = 0$ unless $i < j$. Moreover, 
Corollary~\ref{c:ring} implies that $A^k_j=0$ for all $j < \ell$.
Hence, by induction on $j$,
for all $j \in I$
$$\langle p, e_j \rangle =  
\sum_{\substack{\{i_0,\dots,i_m \} \subseteq I \\ i_0 < \dots < i_m = j }} (-1)^m \lambda_{i_0} A_{i_1}^{i_0} \dots A^{i_{m-1}}_{i_m}=
\Xi(\j \cap I ),$$ 
with the sum taken over subsets $\{i_0,\dots,i_m \} \subseteq  I$ with
$i_0 < \dots < i_m = j$.
Hence,
\begin{gather*}
\lambda_\ell - \big\langle p, \sum_{i \neq k} A_\ell^i  e_i \big\rangle \
= \! \! \! \! \sum_{\substack{ \{i_0,\dots,i_m\} \subseteq I \cup \{\ell\} \\ i_0 < \dots < i_m = \ell}}
\! \! \! \! (-1)^m \lambda_{i_0} A^{i_0}_{i_1} \dots A^{i_{m-1}}_{i_m} %\\ 
= \Xi(I \cup \{\ell\}); \\
\lambda_k -  
\big \langle p,  \sum_i A^i_k  e_i \big\rangle  
=  
\! \! \! \! \sum_{\substack{ \{i_0,\dots,i_m\} \subseteq I \cup \{k\} \\ i_0 < \dots < i_m = k}}
\! \! \!  \! (-1)^m  \lambda_{i_0}  A_{i_1}^{i_0} \dots  A_{i_m}^{i_{m-1}}
=
 \,\Xi(\k \cap I  \cup \{k\}).
\end{gather*}
By part (b) of Lemma~\ref{main prop}, 
$$\textstyle \Xi( I \cup \{\ell\}) = \frac{1}{2}(A_\ell^k + \tA_\ell^k)\, \Xi(\k \cap I \cup \{k\}) +  \tXi(I \cup \{k,\ell\}).$$
Finally, by  Proposition~\ref{kahler condition}, if $(\tA,\tlambda)$ defines a symplectic Bott manifold
then $\tXi(I \cup \{k,\ell\})$ is positive.
The claim follows immediately.
\end{proof}

We can now  show how the cohomology ring isomorphisms we are considering here  relate to the ``slide" operator defined
in Definition~\ref{defineF}.
\begin{lemma}\label{l:keep}
Let $(M,\omega)$ and $(\tM,\tomega)$ be the symplectic Bott manifolds
associated to strictly upper triangular matrices $A$ and $\tA$   in $\mat$
and $\lambda$ and $\tlambda$ in $\Z^n$, respectively.
Fix integers $1 \leq k < \ell \leq n$.
Assume that there exists 
an isomorphism from $\hh^*(M)$ to $\hh^*(\tM)$
that takes $x_k$ to $\tx_k - \frac{1}{2}\big( A_\ell^k - \tA_\ell^k \big) \tx_\ell$,
$x_i$ to $\tx_i$ for all
$i \neq k$,
and $\sum \lambda_i x_i$ to $\sum \tlambda_i \tx_i$.
If $\tA_\ell^k \geq |A_\ell^k|$ and 
$c:= \frac{1}{2}\big(A_\ell^k + \tA_\ell^k\big)$, then
\begin{equation}\labell{get}
\mathcal F_{-e_k + c e_\ell}(\Delta(A,\lambda)\cap \Z^n) =  \Delta(\tA,\tlambda) \cap \Z^n.
\end{equation}
\end{lemma}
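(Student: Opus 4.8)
The plan is to prove \eqref{get} one affine line at a time. The direction $w := -e_k + c e_\ell$ involves only the $k$-th and $\ell$-th coordinates, so every affine line $\af$ parallel to $w$ has its remaining coordinates fixed; write $q$ for that tuple of fixed coordinates, which must be integral if $\af$ is to meet $\Z^n$ at all. By Definition~\ref{defineF} the operator $\F_w$ acts independently on each such line, replacing the integral points of $\Delta(A,\lambda)$ on $\af$ by the same number of integral points slid as far as possible in the direction $w$ inside $\Z^n_{\geq 0}$. So it suffices to fix an integral $q$, identify the plane $\{q\} \times \R^2$ spanned by the $k$-th and $\ell$-th axes with $\R^2$ (via $e_k \mapsto e_1$, $e_\ell \mapsto e_2$, so that $w \mapsto -e_1 + c e_2$), and prove the resulting two-dimensional statement; I would obtain it from Lemma~\ref{cut trapezoid}.

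First I would sort the $2n$ facet inequalities of $\Delta(A,\lambda)$ by the index $j$ and see which ones constrain the $k$-th and $\ell$-th coordinates. Using that $A$ is strictly upper triangular, that $A^k_j = 0$ for $j < \ell$ (Corollary~\ref{c:ring}), and that $A^k_j = c\, A^\ell_j$ for $j \neq \ell$ (the final displayed equation of Lemma~\ref{ring} together with $c = \frac{1}{2}(A^k_\ell + \tA^k_\ell)$), one checks: the inequalities with $j < k$ or $k < j < \ell$, and every inequality $\langle p, e_j\rangle \geq 0$ with $j \neq k,\ell$, impose conditions on $q$ alone; the $j = k$ pair becomes $0 \leq \langle p, e_1\rangle \leq \Lambda_1(q)$ with $\Lambda_1(q) := \lambda_k - \sum_{i < k} A^i_k p_i$; the $j = \ell$ inequality becomes $\langle p, e_2 + A^k_\ell e_1\rangle \leq \Lambda_2(q)$ with $\Lambda_2(q) := \lambda_\ell - \sum_{i < \ell,\, i \neq k} A^i_\ell p_i$; and, since $A^k_j = c A^\ell_j$, each inequality with $j > \ell$ either constrains $q$ alone (if $A^\ell_j = 0$) or is a one-sided bound on $\langle p, e_2 + c e_1\rangle$, so together they read $\lambda'(q) \leq \langle p, e_2 + c e_1\rangle \leq \lambda''(q)$ for suitable $\lambda'(q), \lambda''(q) \in \Q \cup \{\pm\infty\}$. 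Thus the cross-section of $\Delta(A,\lambda)$ over $q$ is precisely the polytope $Q$ of Lemma~\ref{cut trapezoid}, with $A^1_2 = A^k_\ell$, $\lambda_1 = \Lambda_1(q)$, $\lambda_2 = \Lambda_2(q)$. Running the same computation for $\Delta(\tA,\tlambda)$ and using the relations of Lemma~\ref{ring} ($\tA^i_j = A^i_j$, $\tlambda_j = \lambda_j$ for $j \neq \ell$; $\tA^i_\ell = A^i_\ell - \frac{1}{2} A^i_k(A^k_\ell - \tA^k_\ell)$ for $i \neq k$; $\tlambda_\ell = \lambda_\ell - \frac{1}{2}\lambda_k(A^k_\ell - \tA^k_\ell)$), one finds the $q$-conditions and the slab $[\lambda'(q),\lambda''(q)]$ unchanged, the $j=k$ bound still $\Lambda_1(q)$, and the $j=\ell$ inequality now $\langle p, e_2 + \tA^k_\ell e_1\rangle \leq \tLambda_2(q)$ with $\tA^k_\ell = 2c - A^k_\ell$ and $\tLambda_2(q) = \Lambda_2(q) + (c - A^k_\ell)\Lambda_1(q)$ --- exactly the parameter change of Lemma~\ref{cut trapezoid}. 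So the cross-section of $\Delta(\tA,\tlambda)$ over $q$ is the polytope $\Tilde Q$ of that lemma.

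It then remains to verify the remaining hypotheses of Lemma~\ref{cut trapezoid} for each integral $q$. Since $\lambda,\tlambda \in \Z^n$ the numbers $A^k_\ell, \Lambda_1(q), \Lambda_2(q)$ are integers; $c$ is a non-negative integer because $\tA^k_\ell \geq |A^k_\ell|$ forces $A^k_\ell + \tA^k_\ell \geq 0$ while Lemma~\ref{ring} forces $A^k_\ell \equiv \tA^k_\ell \pmod 2$; and $\tA^k_\ell \geq |A^k_\ell| \geq A^k_\ell$ gives the required $\tA^1_2 \geq A^1_2$. The one substantive point is the strict inequality $\Lambda_2(q) > c\,\Lambda_1(q)$ that Lemma~\ref{cut trapezoid} demands. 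If $q$ satisfies the conditions isolated above coming from indices $j < \ell$, these are exactly the inequalities \eqref{inequalities} of Lemma~\ref{technical} (Corollary~\ref{c:ring} makes the $k<j<\ell$ ones independent of the $k$-th and $\ell$-th coordinates), so \eqref{extendposn} holds; but the two sides of \eqref{extendposn} depend only on $q$ and are exactly $\Lambda_2(q)$ and $c\,\Lambda_1(q)$, and Lemma~\ref{technical} applies because $(\tM,\tomega)$, being a symplectic Bott manifold, means $(\tA,\tlambda)$ defines one. If instead $q$ violates one of those $q$-conditions, then --- these conditions being literally the same for $\Delta(A,\lambda)$ and $\Delta(\tA,\tlambda)$ --- the line $\af$ meets neither polytope and both sides of \eqref{get} restrict trivially to $\af$. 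Applying Lemma~\ref{cut trapezoid} line by line and reassembling yields \eqref{get}. (If $A = \tA$ then $c = A^k_\ell$ and $\Tilde Q = Q$, and the cross-section description shows directly that $\Delta(A,\lambda) \cap \Z^n$ is already as far as possible in the direction $w$, so $\F_w$ fixes it.)

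The step I expect to be the main obstacle is the bookkeeping in the second paragraph: checking family by family that the cross-section collapses to the two-variable ``trapezoid with a slab cut'' of Lemma~\ref{cut trapezoid}, and that passing from $(A,\lambda)$ to $(\tA,\tlambda)$ reproduces \emph{exactly} the prescribed change $\tA^1_2 = 2c - A^1_2$, $\tlambda_1 = \lambda_1$, $\tlambda_2 = \lambda_2 + (c - A^1_2)\lambda_1$. What makes this possible is the identity $A^k_j = c\,A^\ell_j$ for $j \neq \ell$ from Lemma~\ref{ring}: it forces every facet with $j > \ell$ to see the $k$-th and $\ell$-th coordinates only through $\langle p, e_2 + c e_1\rangle$, the linear functional constant along $w$, so those facets become the harmless slab rather than genuine two-dimensional constraints.
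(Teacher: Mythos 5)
Your proposal is correct and follows essentially the same route as the paper's proof: fix the coordinates other than the $k$-th and $\ell$-th (the paper's affine plane $V(p')$), use the relations from Lemma~\ref{ring} and Corollary~\ref{c:ring} --- above all $A^k_j = c\,A^\ell_j$ for $j \neq \ell$ --- to identify the two cross-sections with the polytopes $Q$ and $\Tilde Q$ of Lemma~\ref{cut trapezoid}, obtain the needed strict inequality $\lambda_2 > c\lambda_1$ from Lemma~\ref{technical} via \eqref{extendposn}, dispose of the fibers where a constraint depending only on the fixed coordinates fails (both cross-sections being empty there, since those constraints are identical for $(A,\lambda)$ and $(\tA,\tlambda)$), and reassemble to get \eqref{get}. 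The only caveat is your closing parenthetical about $A = \tA$: Lemma~\ref{ring} is stated for \emph{distinct} matrices, so in that degenerate case the identity $A^k_j = c\,A^\ell_j$ underlying your cross-section analysis is unavailable, and without it $\Delta(A,\lambda)\cap\Z^n$ need not already be slid as far as possible (for instance $n=3$, $k=1$, $\ell=2$, $A^1_2 = 1$, $A^1_3 = 0$, $A^2_3 = 5$, $\lambda = (1,2,11)$: the point $(1,0,10)$ slides to $(0,1,10) \notin \Delta$); since the paper's proof quotes the same relations and thus implicitly assumes $A \neq \tA$ as well, this is a defect of the degenerate case of the statement rather than of your argument, but the parenthetical as written is not a proof of it.
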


\begin{proof}
By assumption $c \geq 0$.
By Lemma~\ref{ring}, $c$ is an integer; moreover,
\begin{gather*}\label{keepeq}
%\tA_\ell^k = 2c - A^k_\ell \\
\tA^i_\ell = A^i_\ell + A_k^i(c - A^k_\ell) 
 \mbox{ and } \tlambda_\ell = \lambda_\ell + \lambda_k(c - A^k_\ell) 
\quad  \forall \ i \neq k  \\
\tA^i_j = A^i_j \mbox{ and } \tlambda_j = \lambda_j \quad  \forall \  i,\  \forall \ j \neq \ell \\
A_j^k = c A^\ell_j \quad  \forall \   j \neq \ell.
\end{gather*}
Given $p' \in \Z^n$,  consider the affine two-plane
$$V(p') := \{ p \in \R^n \mid \langle p, e_j \rangle = \langle p', e_j \rangle \mbox{ for all }
j \not\in \{k,\ell\} \}.$$
Below, we will fix an arbitrary $p' \in \Z^n$, and prove that
\begin{equation}\labell{need}
\mathcal F_{-e_k + c e_\ell}(\Delta(A,\lambda)\cap  \Z^n  \cap V(p')) = \Delta(\tA,\tlambda)  \cap \Z^n \cap V(p').
\end{equation}
Since the vector $-e_k + c e_\ell$ is parallel to the plane $V$, 
$$\mathcal F_{-e_k + c e_\ell}(\Delta(A,\lambda) \cap \Z^n) \cap V(p') = \mathcal F_{-e_k + c e_\ell} (\Delta(A,\lambda) \cap \Z^n \cap V(p')).$$
Hence, since both equations hold for all $p' \in \Z^n$, we obtain
\begin{equation*}
\mathcal F_{-e_k + c e_\ell}(\Delta(A,\lambda)\cap \Z^n) =  \Delta(\tA,\tlambda) \cap \Z^n,
\end{equation*}
which is exactly the required equation \eqref{get}.

We now fix $p' \in \Z^n$ and turn to proving \eqref{need}. 

Assume first that  $0 > \langle p', e_j \rangle$ for some $j \not\in \{k,\ell\}$.
In this case, $\Delta(A,\lambda) \cap V(p')$ and $\Delta(\tA,\tlambda) \cap V(p')$ are both empty; hence,
\eqref{need} is satisfied.
Similarly, assume that  $\langle p', e_j + \sum_i A^i_j e_i \rangle > \lambda_j$ for
some $j \not\in \{k,\ell\}$ such that $A^\ell_j = 0$.
Then,  $A^k_j = c A^\ell_j = 0$ as well. Therefore, 
$\langle p, e_j + \sum_i A^i_j e_i \rangle = \langle p', e_j + \sum_i A^i_j e_i \rangle > \lambda_j$
for all $p \in V(p')$, and so $\Delta(A,\lambda) \cap V(p')$ is empty.
Since $\tlambda_j = \lambda_j$ and $\tA^i_j = A^i_j$ for all $i$, 
$\Delta(\tA,\tlambda) \cap V(p')$ is also empty.
Once again, \eqref{need} is satisfied.

Therefore, we may assume that $0 \leq \langle p', e_j \rangle$ for all $j \not\in \{k,\ell\}$,
and that 
$\langle p', e_j + \sum_i A^i_j e_i \rangle \leq \lambda_j$ for
all $j \not\in \{k,\ell\}$ such that $A^\ell_j = 0$.
Since these inequalities are also satisfied if we replace $p'$ by any  $p \in V(p')$,
a point $p \in V(p')$ lies in $\Delta(A,\lambda)$ exactly if it satisfies the inequalities
\begin{gather*}
0 \leq \langle p, e_k \rangle \leq \lambda_k - \langle p', \sum_i A^i_k  e_i \rangle, \
0 \leq \langle p, e_\ell \rangle, \\
\langle p, A_\ell^k e_k + e_\ell\rangle \leq \lambda_\ell - \langle p', \sum_{i \neq k} A^i_\ell  e_i \rangle, \ 
\lambda' \leq \langle p, c e_k + e_\ell \rangle \leq  \lambda'',
\end{gather*}
where
\begin{gather*}
\lambda' =
 \max_{\{j \colon  A_j^\ell <  0 \}} \frac{\lambda_j - \langle p', e_j +\sum_{i \not\in\{k,\ell\}} A^i_j e_i \rangle}  {A_j^\ell} ,\\
\lambda'' = 
\min_{\{j \colon  A_j^\ell >  0 \}} \frac{\lambda_j - \langle p', e_j +\sum_{i \not\in\{k,\ell\}} A^i_j e_i \rangle}  {A_j^\ell}.
\end{gather*}
Here, we have used the fact that $A_j^k = c A_j^\ell$ for any $j \neq \ell$,
and adopted the convention that $\min (\emptyset) = \infty$
and $\max (\emptyset) =  - \infty$.
Moreover, since $A_j^\ell = 0$ for all $j < \ell$,
$0 \leq \langle p',e_j \rangle$ and $\langle p', e_j + \sum_i A_j^i e_i \rangle \leq \lambda_j$ for all
$j \in \l \setminus \{k,\ell\}$.
Hence, Lemma~\ref{technical} implies that 
\begin{equation*}
\lambda_\ell - \big\langle p',  \sum_{i \neq k} A_\ell^i e_i \big\rangle 
 > c
\big( \lambda_k -  \big\langle
p',  \sum_i A^i_k e_i \big \rangle \big). 
\end{equation*}
Therefore, by Lemma~\ref{cut trapezoid},
(with $( \lambda_k -  \big\langle
p',  \sum_i A^i_k e_i \big \rangle \big)$ and $(\lambda_\ell - \big\langle p',  \sum_{i \neq k} A_\ell^i e_i \big\rangle)$ playing the roles of $\lambda_1$ and $\lambda_2$, respectively)
an integral point $p \in  \Z^n \cap V(p')$ lies in $\mathcal{F}_{-e_\ell + c e_k}(\Delta(A,\lambda)\cap \Z^n)$ exactly if it satisfies the inequalities
\begin{multline*}
0 \leq \langle p, e_k \rangle \leq \lambda_k - \langle p', \sum_i A^i_k e_i \rangle ,\ 0 \leq \langle p, e_\ell \rangle, \\
\langle p, (2c - A_\ell^k) e_k + e_\ell\rangle \leq 
(c -  A_\ell^k)\big( \lambda_k  - \langle p',  \sum_{i \neq k} A_k^i  e_i \rangle  \big)
+ \lambda_\ell  
 - \langle p',  \sum_{i \neq k} A_\ell^i e_i \rangle , \\
\lambda' \leq \langle p, c \, e_k + e_\ell \rangle \leq  \lambda''.
\end{multline*}
By the equations displayed at the beginning of this proof,
we can rewrite these inequalities as
\begin{multline*}
0 \leq \langle p, e_k \rangle \leq \lambda_k - \langle p', \sum_i \tA^i_k e_i \rangle ,\ 0 \leq \langle p, e_\ell \rangle, \\
\langle p,\tA_\ell^k e_k + e_\ell\rangle \leq 
\tlambda_\ell - 
 \langle p',  \sum_{i \neq k} \tA_\ell^i  e_i \rangle,\   
\lambda' \leq \langle p, c \, e_k + e_\ell \rangle \leq  \lambda''.
\end{multline*}
Finally,
a point $p \in \Z^n \cap V(p')$  lies in $\Delta(\tA,\tlambda)$ exactly if it satisfies the
above inequalities.
Hence, as required, \eqref{need} is satisfied.
\end{proof}

We now have all the tools we need to prove our main proposition.

\begin{proposition}\label{keep}
Let $(M,\omega)$ and $(\tM,\tomega)$ be the symplectic Bott manifolds
associated to strictly upper triangular matrices $A $ and $\tA$  in $\mat$
and $\lambda$ and $\tlambda$  in $\Z^n$, respectively.
Fix integers $1 \leq k < \ell \leq n$.
Assume that there exists 
an isomorphism from $\hh^*(M)$ to $\hh^*(\tM)$
that takes $x_k$ to $\tx_k - \frac{1}{2}\big( A_\ell^k - \tA_\ell^k \big) \tx_\ell$,
$x_i$ to $\tx_i$ for all
$i \neq k$,
and $\sum \lambda_i x_i$ to $\sum \tlambda_i \tx_i$.
Then $(M,\omega)$ and $(\tM,\tomega)$ are symplectomorphic.
\end{proposition}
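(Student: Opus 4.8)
The plan is to deduce the claim from Lemma~\ref{l:keep} and Proposition~\ref{nice cond give sympl}, after normalizing signs. Let $F$ denote the given ring isomorphism. If $A=\tA$, then the last assertion of Lemma~\ref{ring} forces $\gamma:=\frac12(A^k_\ell-\tA^k_\ell)=0$ and $\lambda=\tlambda$, so $(M,\omega)$ and $(\tM,\tomega)$ coincide and there is nothing to prove; hence I would assume $A\ne\tA$, so that $\gamma\ne0$, and set $c:=\frac12(A^k_\ell+\tA^k_\ell)$.

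The main preliminary step is to arrange that $c\ge0$. If $c<0$, I would apply Lemma~\ref{change sign}, with its distinguished index taken equal to our $k$, simultaneously to $(A,\lambda)$ and to $(\tA,\tlambda)$. This produces symplectic Bott manifolds $(\breve M,\breve\omega)$ and $(\breve{\tM},\breve{\tomega})$ with $\breve A^k_\ell=-A^k_\ell$ and $\breve{\tA}^k_\ell=-\tA^k_\ell$, together with symplectomorphisms $f\colon(M,\omega)\to(\breve M,\breve\omega)$ and $\tilde f\colon(\tM,\tomega)\to(\breve{\tM},\breve{\tomega})$. The composite $(\tilde f^{*})^{-1}\circ F\circ f^{*}\colon\hh^{*}(\breve M)\to\hh^{*}(\breve{\tM})$ is then a ring isomorphism carrying $[\breve\omega]$ to $[\breve{\tomega}]$, and I would check --- by expanding $f^{*}(\breve x_k)=x_k+\sum_jA^k_jx_j$ and using the identities $A^k_j=cA^\ell_j$ for $j\ne\ell$ and $A^k_j=0$ for $j<\ell$ furnished by Lemma~\ref{ring} and Corollary~\ref{c:ring} --- that this composite again has the shape required in the hypothesis, with $\gamma$ and $c$ replaced by $-\gamma$ and $-c>0$. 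Since being symplectomorphic is an equivalence relation, it then suffices to prove the statement for $(\breve M,\breve\omega)$ and $(\breve{\tM},\breve{\tomega})$, and so I may assume from now on that $c\ge0$.

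With $c\ge0$ and $A\ne\tA$, exactly one of $A^k_\ell$ and $\tA^k_\ell$ is strictly larger, and since their sum is $2c\ge0$ that larger value is at least the absolute value of the other. After interchanging $M$ and $\tM$ if necessary --- which replaces $F$ by $F^{-1}$, still of the required form but with $\gamma$ negated --- I may assume $\tA^k_\ell\ge|A^k_\ell|$. Now for every integer $m>0$ the map $F$ also sends $\sum_im\lambda_ix_i$ to $\sum_im\tlambda_i\tx_i$, and $(A,m\lambda)$ and $(\tA,m\tlambda)$ still define symplectic Bott manifolds, so Lemma~\ref{l:keep} applied to $(A,m\lambda)$ and $(\tA,m\tlambda)$ yields
\[
\mathcal F_{-e_k+ce_\ell}\big(m\Delta(A,\lambda)\cap\Z^n\big)=m\Delta(\tA,\tlambda)\cap\Z^n ,
\]
using $m\Delta(A,\lambda)=\Delta(A,m\lambda)$. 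The moment polytopes $\Delta(A,\lambda)$ and $\Delta(\tA,\tlambda)$ of $(M,\omega)$ and $(\tM,\tomega)$ are integral smooth polytopes aligned with $\R^n_{\ge0}$ (each $\lambda_j=\Xi(\{j\})>0$, so only the inequalities $\langle p,e_j\rangle\ge0$ are active near the origin), so Proposition~\ref{nice cond give sympl} produces a symplectomorphism between the symplectic toric manifolds determined by these polytopes, which by Delzant's theorem are $(M,\omega)$ and $(\tM,\tomega)$.

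The essential geometric content already sits in Lemma~\ref{l:keep}, so inside this argument the delicate point is the sign normalization, and I expect the main obstacle to be the verification in the case $c<0$ that the sign change of Lemma~\ref{change sign}, performed on both manifolds, transports the ``keep one generator'' isomorphism to an isomorphism of exactly the same form. This is a finite computation but not an automatic one: the map $f^{*}$ of that lemma mixes $\breve x_k$ with all the other generators, and it is only the special relations among the entries of $A$ imposed by Lemma~\ref{ring} that make the unwanted terms cancel, leaving $\breve x_k\mapsto\breve{\tx}_k-\frac12(\breve A^k_\ell-\breve{\tA}^k_\ell)\breve{\tx}_\ell$.
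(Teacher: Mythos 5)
Your proposal is correct and follows essentially the same route as the paper: reduce to $c=\frac12(A_\ell^k+\tA_\ell^k)\ge 0$ by applying Lemma~\ref{change sign} at the index $k$ to both manifolds and transporting the isomorphism, normalize so that $\tA_\ell^k\ge|A_\ell^k|$ by reversing the isomorphism if needed, apply Lemma~\ref{l:keep} to $(A,m\lambda)$ and $(\tA,m\tlambda)$ for every $m>0$, and conclude via Proposition~\ref{nice cond give sympl} and Delzant's theorem. The only differences are cosmetic (you do the sign normalization before rather than after the main case, and you dispose of the trivial case $A=\tA$ explicitly), and the one computation you defer — that the conjugated isomorphism again has the prescribed form — is exactly the "simple computation" the paper likewise leaves to the reader.
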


\begin{proof}
Suppose first that $c:= \frac{1}{2}\big( A_\ell^k + \tA_\ell^k\big) \geq 0$.
By reversing the isomorphism if necessary, we may assume that $\tA_\ell^k \geq A_\ell^k$,
and so $\tA_\ell^k \geq |A_\ell^k|$.
By Lemma~\ref{l:keep},
$$
\mathcal F_{-e_k + c e_\ell}(\Delta(A,\lambda)\cap \Z^n) =  \Delta(\tA,\tlambda) \cap \Z^n.
$$
Moreover, the only properties of $\lambda$ and $\tlambda$ we needed
are the fact that this isomorphism takes $\sum \lambda_i x_i$ to $\sum \tlambda_i \tx_i$,
and the fact that $(A,\lambda)$ and $(\tA,\tlambda)$
define symplectic Bott manifolds.
Therefore, \eqref{get} still holds if we replace $\lambda$ by $m \lambda$ and $\tlambda$ by $m \tlambda$
for some positive integer $m$.
Since $m \Delta(A,\lambda) = \Delta(A,m\lambda)$ and
$m \Delta(A,\tlambda) = \Delta(A,m\tlambda)$, this implies that
$$\mathcal F_{-e_k + c e_\ell}(m \Delta(A,\lambda)\cap \Z^n) =  m \Delta(\tA,\tlambda) \cap \Z^n$$
for all $m \in \Z_{>0}$.
Hence, since $\Delta(A,\lambda)$ and $\Delta(\tA,\tlambda)$ are smooth polytopes,
Proposition~\ref{nice cond give sympl} implies that $(M,\omega)$
and $(\tM,\tomega)$ are symplectomorphic.

So assume instead that  $A_\ell^k + \tA_\ell^k < 0$.
Let $F$ be the isomorphism 
from $\hh^*(M)$ to $\hh^*(\tM)$
that takes $x_k$ to $\tx_k - \frac{1}{2}\big( A_\ell^k - \tA_\ell^k\big) \tx_\ell$,
$x_i$ to $\tx_i$ for all
$i \neq k$,
and $\sum \lambda_i x_i$ to $\sum \tlambda_i \tx_i$.
Define $\breve A, \breve{\Tilde{A}} \in \mat$ and $\breve \lambda, \breve{\Tilde \lambda} \in \R^n$ by
\begin{gather*}
\breve A_j^k = - A_j^k \quad \mbox{and} \quad \breve{\tA_j^k} = - \tA_j^k \quad \forall \ j, \\
\breve A_j^i = A_j^i - A_k^i A_j^k,  \quad 
\breve {\tA_j^i} = \tA_j^i - \tA_k^i \tA_j^k,   \\
 \quad \breve \lambda_j  = \lambda_j - \lambda_k A^k_j, \quad \mbox{and} \quad
  \breve \tlambda_j  = \tlambda_j - \tlambda_k \tA^k_j 
\quad \forall \ j \mbox{ and } \forall \ i \neq k.
\end{gather*}
By Lemma~\ref{change sign}, $(\breve A, \breve \lambda)$ and $(\breve{\tA}, \breve{\tlambda})$
define symplectic Bott manifolds $(\breve M, \breve \omega)$ and $(\breve{\tM}, \breve{\tlambda})$;
moreover, there exist symplectomorphisms $f \colon M \to \breve M$ and $\tilde f \colon \tM \to \breve{\tM}$ 
such that the induced maps in cohomology take $\breve x_k$ to $x_k + \sum_j A_j^k x_j$ and
$\breve \tx_k$ to $\tx_k + \sum_j \tA_j^k \tx_j$, 
and take $\breve x_i$ to $x_i$ and $\breve \tx_i$ to $\tx_i$ for all $i \neq k$.
Moreover, by Lemma~\ref{ring},  $\tA_j^k = A_j^k$ for all $j \neq \ell$.
Therefore, a simple computation shows that
the composition $(\tilde f^*)^{-1} \circ F \circ f^* \colon \hh^*(\breve M)
\to \hh^*(\breve \tM)$ is an isomorphism that takes 
$\breve x_k $ to $\breve \tx_k - \frac{1}{2}\big( \breve {A_\ell^k} - \breve {\tA_\ell^k} \big) \breve \tx_\ell$,
$\breve x_i$ to $\breve \tx_i$ for all $i\neq k$, and
$\sum \breve{\lambda_i} \breve x_i$ to $\sum \breve{\tlambda_i} \breve \tx_i$. 
Since $\breve A_\ell^k + \breve {\tA_\ell^k} > 0$, the claim now follows from 
the first paragraph.
\end{proof}

\begin{remark}
We believe
that the symplectomorphism from $(\tM,\omega)$ to $(M,\omega)$ constructed in Proposition \ref{keep} induces the isomorphism in cohomology described in that proposition.
However, we have not proved this claim here because we didn't need it to prove our main results.
\end{remark}
One consequence of this  proposition is that, given a symplectic Bott manifold,
we can often find a different symplectic Bott manifold  that is symplectomorphic.

\begin{corollary}\label{keycor}
Let $(M,\omega)$ be the symplectic Bott manifold associated to a strictly upper triangular matrix $A \in \mat$ and $\lambda \in \Z^n$.
Assume that there exists $c \in \Z$ and integers $1 \leq k < \ell \leq n$
such that $|2c - A_\ell^k| \leq |A_\ell^k|$ and 
$A_j^k = c A_j^\ell$ for all $j \neq \ell$.
Then $(M,\omega)$ is symplectomorphic to the symplectic Bott
manifold associated to a strictly upper triangular matrix $\tA \in \mat$
and $\tlambda \in \Z^n$  with
\begin{itemize}
\item $\tA_\ell^k = 2c - A_\ell^k$, and otherwise
\item $\tA_j^i = A_j^i$ for all $j$ and all $i \geq k$.
\end{itemize}
\end{corollary}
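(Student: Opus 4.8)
The plan is to write down the candidate matrix $\tA$ and vector $\tlambda$ dictated by Lemma~\ref{ring}, check that they are well defined, and then invoke Propositions~\ref{old main cor} and~\ref{keep}. First I would dispose of the case $c = A^k_\ell$: there the prescription below returns $\tA = A$ and $\tlambda = \lambda$, so the conclusion is tautological. So assume $c \neq A^k_\ell$, and define $\tA \in \mat$ by setting $\tA^k_\ell := 2c - A^k_\ell$, $\tA^i_\ell := A^i_\ell - A^i_k(A^k_\ell - c)$ for $i \neq k$, and $\tA^i_j := A^i_j$ for all $i$ and all $j \neq \ell$; and define $\tlambda \in \Z^n$ by $\tlambda_\ell := \lambda_\ell - \lambda_k(A^k_\ell - c)$ and $\tlambda_j := \lambda_j$ for $j \neq \ell$. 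The first thing to verify is that $\tA$ is again strictly upper triangular (and integral): the only altered entries sit in column $\ell$ with row index at most $k$, and for row index $i > k$ strict upper triangularity of $A$ gives $A^i_k = 0$, so $\tA^i_\ell = A^i_\ell$ there; integrality is clear since $A^k_\ell - c \in \Z$. Next I would check that $(\tA, \tlambda)$ satisfies condition~(2) of Lemma~\ref{ring} relative to $(k,\ell)$: the congruence $\tA^k_\ell \equiv A^k_\ell \pmod 2$ is immediate, the identity $A^k_j = \tfrac12(A^k_\ell + \tA^k_\ell)A^\ell_j = c\,A^\ell_j$ for $j \neq \ell$ is exactly the standing hypothesis, and the remaining two equations hold by construction because $\tfrac12(A^k_\ell - \tA^k_\ell) = A^k_\ell - c$. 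Hence Lemma~\ref{ring} yields a ring isomorphism $\hh^*(M) \to \hh^*(\tM)$, where $\tM$ is the Bott manifold associated to $\tA$, sending $x_k \mapsto \tx_k - (A^k_\ell - c)\tx_\ell$ and $x_i \mapsto \tx_i$ for $i \neq k$; by the choice of $\tlambda$ it also sends $\sum_i \lambda_i x_i \mapsto \sum_i \tlambda_i \tx_i$.

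Since $|\tA^k_\ell| = |2c - A^k_\ell| \leq |A^k_\ell|$ by hypothesis and $(A,\lambda)$ defines the symplectic Bott manifold $(M,\omega)$, Proposition~\ref{old main cor} shows that $(\tA,\tlambda)$ also defines a symplectic Bott manifold $(\tM,\tomega)$. Now every hypothesis of Proposition~\ref{keep} is in place --- the isomorphism just constructed, with $\gamma = \tfrac12(A^k_\ell - \tA^k_\ell)$, together with $\lambda,\tlambda \in \Z^n$ --- so $(M,\omega)$ and $(\tM,\tomega)$ are symplectomorphic. Finally, reading off the definition of $\tA$: one has $\tA^k_\ell = 2c - A^k_\ell$, while every other entry $\tA^i_j$ with $i \geq k$ (namely $i = k$ with $j \neq \ell$, or $i > k$) equals $A^i_j$, the case $i > k$, $j = \ell$ again using $A^i_k = 0$. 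This is precisely the asserted form.

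I do not anticipate a genuine obstacle: the real content is Propositions~\ref{old main cor} and~\ref{keep}, and the only care required is the bookkeeping confirming that the matrix forced by Lemma~\ref{ring} is strictly upper triangular and changes only the advertised entries, for which the essential input is that strict upper triangularity of $A$ forces $A^i_k = 0$ whenever $i > k$.
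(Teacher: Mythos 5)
Your proposal is correct and follows essentially the same route as the paper: define the same $\tA$ and $\tlambda$, verify the conditions of Lemma~\ref{ring} (the paper cites Corollary~\ref{c:ring} for the bookkeeping you do by hand), and then apply Propositions~\ref{old main cor} and~\ref{keep}. Your separate treatment of the case $c = A^k_\ell$ (where $\tA = A$, so Lemma~\ref{ring}'s "distinct matrices" hypothesis would fail) is a small point the paper glosses over, but otherwise the arguments coincide.
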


\begin{proof}
Define a strictly upper triangular matrix $\tA \in \mat$ and $\tlambda \in \Z^n$ by
$\tA_\ell^k =  2c - A_\ell^k$,
$\tlambda_\ell = \lambda_\ell - \lambda_k(A_\ell^k - c)$,
$\tA_\ell^i = A_\ell^i - A_k^i(A_\ell^k -c)$ 
 for all $i \neq k$, and
$\tA_j^i = A_j^i$ and $\tlambda_j = \lambda_j$ for all $i$ and $j \neq \ell$.
Let $\tM$ be the Bott manifold associated to $\tA$.
By  Corollary~\ref{c:ring}, $\tA$ satisfies the equations above;
moreover, by Lemma~\ref{ring},
there's an isomorphism from $\hh^*(M)$ to $\hh^*(\tM)$ that takes
$x_k$ to $\tx_k -(A_\ell^k - c) \tx_\ell$,  $x_i$ to $\tx_i$ for all $i \neq k$,
and $\sum \lambda_i x_i$ to $\sum \tlambda_i \tilde{x}_i$.
Hence, since 
 $|A_\ell^k| \geq  |\tA_\ell^k|$ and
$(A,\lambda)$ defines a symplectic Bott manifold,
Propositions~\ref{old main cor} and \ref{keep} together
imply  that $(\tA,\tlambda)$
defines a symplectic  
Bott manifold that 
is symplectomorphic to $(\tM,\tomega)$.
\end{proof}

%%%%%%%%%%%%%%%%%%%%%%%%%%5
\section{Proof of main theorem}\label{sec proof}
%%%%%%%%%%%%%%%%%%%%%%%%%%%%%%%%%%%

The main goal of this section is to prove that strong symplectic cohomological rigidity holds for the family of
$\Q$-trivial symplectic Bott manifolds with rational symplectic forms.
Here, a $2n$-dimensional Bott manifold $M$ is {\bf $\mathbf \Q$-trivial} if its rational cohomology ring 
$\hh^*(M;\Q)$ is isomorphic to $\hh^*((\CP^1)^n;\Q) \simeq \Q[x_1,\dots,x_n]/(x_1^2,\dots,x_n^2)$.
In the process, we obtain a complete classification of $\Q$-trivial symplectic Bott manifolds
with rational symplectic forms up to symplectomorphism; see  Corollary~\ref{cor classification}.

\begin{theorem}\labell{main theorem}
Let $(M,\omega)$ and $(\tM,\tomega)$
be  $\Q$-trivial symplectic Bott manifolds with rational symplectic forms.
Given a ring isomorphism $F \colon \hh^*(M;\Z) \to \hh^*(\tM;\Z)$ such that 
$F\big([\omega]\big) = [\tomega]$,
there exists a symplectomorphism $f$  from $(\tM,\tomega)$
to  $(M,\omega)$ so that $f^*= F$.
\end{theorem}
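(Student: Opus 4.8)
The plan is to prove the theorem by reducing both manifolds to a canonical form using the symplectomorphisms constructed in Sections~\ref{sec Bott manifolds} and~\ref{sec tech}, and then matching the canonical forms via $F$. First I would normalize the hypotheses: a diffeomorphism is a symplectomorphism for $\omega$ precisely when it is one for any positive multiple of $\omega$, and $F([\omega])=[\tomega]$ forces $F([m\omega])=[m\tomega]$, so after clearing denominators I may assume $(M,\omega)$ and $(\tM,\tomega)$ are defined by pairs $(A,\lambda)$ and $(\tA,\tlambda)$ with $\lambda,\tlambda\in\Z^n$. By Lemma~\ref{cohomology} the isomorphism $F$ is an invertible change of the degree-$2$ generators $x_1,\dots,x_n$, and $\Q$-triviality says the defining ideal becomes $(y_1^2,\dots,y_n^2)$ after extending scalars to $\Q$; I would first extract from this the structural constraints on the matrix $A$ (roughly, that after the reductions below $A$ has small entries arranged on a forest-like support), since these are what drive the induction.

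The heart of the argument is to show that every $\Q$-trivial symplectic Bott manifold with $\lambda\in\Z^n$ is symplectomorphic to one in a fixed \emph{canonical form}. I would induct on $n$, repeatedly applying Corollary~\ref{keycor} to replace an off-diagonal entry $A^k_\ell$ by $2c-A^k_\ell$ whenever the $k$-th row of $A$ is a $c$-multiple of the $\ell$-th (in particular to shrink an entry whose row is supported in a single column), Lemma~\ref{permute} to reorder the factors, and Lemma~\ref{change sign} to flip a row; each of these is realized by an honest symplectomorphism, and Proposition~\ref{old main cor} guarantees we never leave the class of symplectic Bott manifolds while reducing. Hence $(M,\omega)$ is symplectomorphic to its canonical form $(M^\circ,\omega^\circ)$ and $(\tM,\tomega)$ to $(\tM^\circ,\tomega^\circ)$. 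The delicate point is that this reduction terminates and that $(A^\circ,\lambda^\circ)$ is independent of the choices made, so that the canonical form is genuinely canonical; this is essentially the content of the classification Corollary~\ref{cor classification} and rests on the structural analysis of $\Q$-trivial Bott matrices mentioned above.

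Given $F$, I would transport it to an isomorphism $F^\circ\colon\hh^*(M^\circ;\Z)\to\hh^*(\tM^\circ;\Z)$ with $F^\circ([\omega^\circ])=[\tomega^\circ]$. Extending scalars to $\Q$, $F^\circ$ becomes a signed-permutation-type map on $\hh^2$; confronting this with the canonical-form constraints on $A^\circ,\tA^\circ$ and with $F^\circ([\omega^\circ])=[\tomega^\circ]$ forces $(A^\circ,\lambda^\circ)=(\tA^\circ,\tlambda^\circ)$, whence $M^\circ=\tM^\circ$ by Delzant's theorem and $F^\circ$ becomes an \emph{automorphism} $\Phi$ of $\hh^*(M^\circ;\Z)$ fixing $[\omega^\circ]$. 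The final step is to realize $\Phi$ by a self-symplectomorphism of $(M^\circ,\omega^\circ)$: the automorphisms of $\hh^*(M^\circ;\Z)$ fixing $[\omega^\circ]$ are permutations of isomorphic canonical factors with equal symplectic areas, together with the few residual automorphisms of the small irreducible blocks that still fix $[\omega^\circ]$, and each is realized by a symplectomorphism via Lemma~\ref{permute} or Lemma~\ref{change sign} --- crucially, \emph{with} the stated action on cohomology. Composing the symplectomorphisms $\tM\to\tM^\circ=M^\circ$, this $\Phi$-realizing self-symplectomorphism, and $M^\circ\to M$, and chasing the induced maps, produces a symplectomorphism $f\colon(\tM,\tomega)\to(M,\omega)$ with $f^*=F$.

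I expect the main obstacle to be twofold. First, making the normalization genuinely canonical: distinct pairs $(A,\lambda)$ can define symplectomorphic manifolds through Corollary~\ref{keycor}, so one must single out a normal form and prove no further collapse is possible, which requires a careful combinatorial analysis of $\Q$-trivial Bott matrices. Second, upgrading ``symplectomorphic'' to ``$f^*=F$'': Proposition~\ref{keep} only yields the existence of a symplectomorphism and does not control its action on cohomology, so the whole last step hinges on isolating the residual identification as a product of permutation and sign-change automorphisms, which \emph{are} cohomologically tracked by Lemmas~\ref{permute} and~\ref{change sign}.
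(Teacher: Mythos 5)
Your proposal is correct and follows essentially the same route as the paper: reduce both manifolds to a product of the $\mathcal{H}_i$'s via the toric-degeneration symplectomorphisms (Corollary~\ref{keycor}, Lemmas~\ref{keyp}, \ref{change sign}, \ref{permute}, with $\Q$-triviality entering through the exceptional-type analysis), transport $F$ through these uncontrolled symplectomorphisms, show the transported isomorphism is a permutation of the generators, realize it by a Delzant-type symplectomorphism with tracked cohomology, and compose. The only remark worth making is that your worry about the normal form being ``genuinely canonical'' is unnecessary (and citing Corollary~\ref{cor classification} there would be circular): as in the paper's Lemma~\ref{isopermute}, the analysis of primitive square-zero classes together with positivity of the symplectic areas already forces the transported isomorphism to be a permutation matching the two product decompositions, which is all the final step needs.
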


We begin by constructing some examples of $\Q$-trivial  Bott manifolds.
Given a positive integer $n$, define a strictly upper triangular matrix $A \in \mat$
by  $A_n^i = -1$ for all $i < n$, and  $A^i_j = 0$ for all other $i,j \in [n]$.
Let $\mathcal{H}_n$ be the associated Bott manifold.
By the discussion in the beginning of Section~\ref{s:bm}, 
$$ \mathcal{H}_n = \Big( (\CP^1)^{n-1} \times \big( \C^2 \setminus \{(0,0)\} \big) \Big) /{\C^\times},$$
where $\C^\times$ acts diagonally on $\C^2$, and acts on each $\CP^1$ 
by multiplication on the second coordinate.
Hence, $\mathcal{H}_n$ is a $(\CP^1)^{n-1}$ bundle over $\CP^1$.
More specifically, $\mathcal{H}_2$ is the Hirzebruch surface $\P( \calO(0) \oplus \calO(1)) \cong \P(\calO(-1) \oplus \calO(0))$,
and 
$\mathcal{H}_n$  is the fiber product of $\mathcal{H}_{n-1}$ with $\mathcal{H}_1$
 for all $n \geq 2$. 
By Lemma \ref{cohomology}, the  cohomology of $\mathcal{H}_n$ is given by
$$\hh^*(\mathcal{H}_n;\Z) = \Z[x_1,\dots,x_n]/\big( x_1^2 - x_1x_n, \dots, x_{n-1}^2 - x_{n-1} x_n, x_n^2 \big).$$
Define  $y_i \in \hh^*(\mathcal{H}_n;\Q)$ by 
$y_i = x_i - \frac{1}{2} x_n$ for all $i < n$, and $y_n = x_n$.
The rational cohomology ring
 $\hh^*(\mathcal{H}_n;\Q)$ is isomorphic  to $\Q[y_1,\dots,y_n]/\big(y_1^2, \ldots, y_n^2)$,
as required.

Given $\lambda \in \R^n$, the associated polytope $\Delta(A,\lambda)$
is the set of $p \in \R^n$ such that
$$ \textstyle   \langle p,e_j \rangle \geq 0  \ \forall \  j, \
  \langle p, e_j \rangle \leq \lambda_j \ \forall \ j < n , \mbox{ and }  \langle p, e_n - \sum_{i < n} e_i \rangle \leq \lambda_n.$$
By Proposition~\ref{kahler condition}, $\Delta(A,\lambda)$ is a Bott polytope exactly
if 
$\lambda \in (\R_{>0})^n$.
Assume this holds, and
let $\mathcal{H}_n =\mathcal{H}(\lambda_1,\ldots,\lambda_n)$ be the associated symplectic 
Bott manifold.
More generally, given a partition $\sum_{s=1}^m l_s = n$ of $n$,
and  $\lambda \in (\R_{>0})^n$, define a $\Q$-trivial symplectic Bott manifold
$$\mathcal{H}(\lambda_1, \ldots, \lambda_{l_1})\,\times \dots \times \mathcal{H}(\lambda_{n-l_m+1}, \ldots, \lambda_{n}).$$
Our first lemma, when combined with  Lemma~\ref{permute}, shows
that strong symplectic cohomological rigidity holds for the family of
symplectic Bott manifolds that 
are products of the $\mathcal{H}_i$. 

\begin{lemma}\label{isopermute}
Fix a positive integer $n$.
Let $\sum_{s=1}^m l_s = \sum_{s = 1}^\tm \tl_s = n$
be  partitions  of $n$,
and fix $\lambda, \tlambda \in (\R_{>0})^n$.
Consider the symplectic Bott manifolds
\begin{gather*}
(M,\omega) := \mathcal{H}(\lambda_1,\dots,\lambda_{l_1} )
\times \cdots \times \mathcal{H}(\lambda_{n - l_m + 1},\dots,\lambda_{n}), \\
(\tM,\tomega) := \mathcal{H}(\tlambda_1,\dots,\tlambda_{\tl_1} )
\times \cdots \times \mathcal{H}(\tlambda_{n - \tl_\tm + 1},\dots,\tlambda_{n}).
\end{gather*}
Given a ring isomorphism $F \colon \hh^*(M;\Z) \to \hh^*(\tM;\Z)$ such that $[\omega] = [\tomega]$,
there exists a permutation $\sigma$ of $n$ such that $F(x_i) = \tx_{\sigma(i)}$ for all $i \in [n]$.
\end{lemma}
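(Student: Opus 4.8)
The plan is to pass to rational coefficients, where the factor structure disappears and $F$ is forced to permute a natural family of lines, and then to combine the integral lattice with the positivity built into the symplectic hypothesis to pin $F$ down. Write $\mathcal L\subseteq[n]$ for the set of indices that are largest in their $\mathcal H$-factor, and for $i\notin\mathcal L$ let $b(i)\in\mathcal L$ be the largest index of the factor $S(i)$ containing $i$; by Lemma~\ref{cohomology} the relations defining $\hh^*(M;\Z)$ are $x_i^2=x_ix_{b(i)}$ for $i\notin\mathcal L$ and $x_b^2=0$ for $b\in\mathcal L$. Following the construction of $\mathcal H_n$, put $y_i=x_i-\tfrac12 x_{b(i)}$ for $i\notin\mathcal L$ and $y_b=x_b$ for $b\in\mathcal L$, so that $\hh^*(M;\Q)\cong\Q[y_1,\dots,y_n]/(y_1^2,\dots,y_n^2)$; define $\ty_1,\dots,\ty_n$ for $\tM$ analogously. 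Since the degree-four monomials $y_iy_j$ ($i<j$) are linearly independent, a class $v=\sum_i a_iy_i$ has $v^2=0$ iff at most one $a_i$ is nonzero; hence the rational classes with vanishing square are exactly the lines $\Q y_i$, and likewise for $\tM$. First I would apply this to the ring isomorphism $F$ to get a permutation $\sigma$ of $[n]$ and scalars $r_i\in\Q^\times$ with $F(y_i)=r_i\ty_{\sigma(i)}$ for all $i$.

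Next I would extract two kinds of constraints. Writing $[\omega]=\sum_i\mu_iy_i$ one has $\mu_i=\lambda_i$ for $i\notin\mathcal L$ and $\mu_b=\lambda_b+\tfrac12\sum_{i\in S(b)\setminus\{b\}}\lambda_i$ for $b\in\mathcal L$; since a manifold $\mathcal H(\lambda_1,\dots,\lambda_l)$ is symplectic precisely when all $\lambda_i>0$, every $\mu_i>0$, and likewise $[\tomega]=\sum_j\widetilde\mu_j\ty_j$ with all $\widetilde\mu_j>0$. Comparing the hypothesis $F([\omega])=[\tomega]$ in the $\ty$-basis gives $\widetilde\mu_{\sigma(i)}=\mu_ir_i$, so $r_i>0$. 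On the other hand $F$ restricts to a lattice isomorphism $\hh^2(M;\Z)\cong\hh^2(\tM;\Z)$, so it carries primitive classes with vanishing square to primitive classes with vanishing square. The integral points of the line $\Q y_i$ are $\Z x_b$ if $i=b\in\mathcal L$ and $\Z(2x_i-x_{b(i)})$ if $i\notin\mathcal L$; hence the primitive null classes of $M$ are exactly $\pm x_b$ ($b\in\mathcal L$) and $\pm(2x_i-x_{b(i)})$ ($i\notin\mathcal L$), and similarly for $\tM$. Using $2x_i-x_{b(i)}=2y_i$ and matching these classes under $F$, together with $r_i>0$, forces
\[
r_b=1\ \text{ and }\ \sigma(b)\in\widetilde{\mathcal L},\quad\text{or}\quad r_b=2\ \text{ and }\ \sigma(b)\notin\widetilde{\mathcal L}\qquad(b\in\mathcal L),
\]
\[
r_i=1\ \text{ and }\ \sigma(i)\notin\widetilde{\mathcal L},\quad\text{or}\quad r_i=\tfrac12\ \text{ and }\ \sigma(i)\in\widetilde{\mathcal L}\qquad(i\notin\mathcal L).
\]

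Then I would show $\sigma$ carries $\mathcal H$-factors onto $\mathcal H$-factors without flipping. Fix a factor $S$ of $M$ with largest index $b$. Writing $F(x_b)=r_b\ty_{\sigma(b)}$ and, for $i\in S\setminus\{b\}$, $F(x_i)=r_i\ty_{\sigma(i)}+\tfrac12 r_{b(i)}\ty_{\sigma(b(i))}$, and re-expressing these in the integral $\tx$-basis, the requirement that each $F(x_i)$ be integral forces, in each case of the dichotomy above, that $\sigma(i)$ lies in the $\mathcal H$-factor $\widetilde S$ of $\tM$ containing $\sigma(b)$; hence $\sigma(S)\subseteq\widetilde S$, and by bijectivity of $\sigma$ this makes $\sigma$ permute the factors, matching their sizes. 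For a matched pair $S\leftrightarrow\widetilde S$ with largest indices $b$ and $\tc$, either $\sigma(b)=\tc$ — in which case the same computation yields $F(x_i)=\tx_{\sigma(i)}$ for all $i\in S$ (and $\sigma(S\setminus\{b\})=\widetilde S\setminus\{\tc\}$) — or $\sigma(b)$ is a non-largest index of $\widetilde S$ and $F(x_b)=2\tx_{\sigma(b)}-\tx_{\tc}$. To exclude this ``flipped'' alternative I would use positivity: the only generators $x_j$ whose image can involve $\tx_{\tc}$ are those with $j\in S$, and for each such $j$ the coefficient of $\tx_{\tc}$ in $F(x_j)$ is $\le 0$, and it equals $-1$ for $j=b$; therefore the coefficient of $\tx_{\tc}$ in $F([\omega])=\sum_j\lambda_jF(x_j)$ is strictly negative, contradicting that it equals $\widetilde\lambda_{\tc}>0$. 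Hence no factor is flipped, $\sigma$ respects the factor structure, and $F(x_i)=\tx_{\sigma(i)}$ for all $i$.

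The hard part is this last step: rationally all the $y_i$ are interchangeable, so only the combination of the integral structure with the positivity of the coefficients of $\omega$ and $\tomega$ can rule out the integral ring isomorphisms that reverse an $\mathcal H$-factor (e.g. $x_1\mapsto\tx_1,\ x_2\mapsto 2\tx_1-\tx_2$ on $\hh^*(\mathcal H_2;\Z)$). Everything preceding it is bookkeeping with the two bases $\{x_i\}$ and $\{y_i\}$.
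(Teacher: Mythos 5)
Your first three steps (the rational square--zero lines $\Q y_i$, positivity of the scalars $r_i$ via $F([\omega])=[\tomega]$, and the primitivity dichotomy) are correct, and your final positivity argument does exclude a ``flipped'' factor \emph{once the factors have been matched}. The genuine gap is the matching itself: from $\sigma(S)\subseteq\widetilde S$ for each factor $S$ you conclude that ``by bijectivity of $\sigma$ this makes $\sigma$ permute the factors, matching their sizes.'' Bijectivity of $\sigma$ only shows that each factor of $\tM$ is the disjoint union of the images of those factors of $M$ that land in it; it does not rule out two distinct factors of $M$ landing inside one larger factor of $\tM$ (and note your integrality test is vacuous for singleton factors). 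Such a configuration is consistent with \emph{every} constraint you actually extract. Concretely, take $M=\mathcal H(\lambda_1)\times\mathcal H(\lambda_2)=\CP^1\times\CP^1$ and $\tM=\mathcal H(\tlambda_1,\tlambda_2)$ a single Hirzebruch factor with top index $2$, and consider $x_1\mapsto\tx_2$, $x_2\mapsto 2\tx_1-\tx_2$. This is a well-defined degree--preserving ring homomorphism (both images square to zero), it sends primitive square--zero classes to primitive square--zero classes, it realizes your dichotomy with $r_1=1$, $r_2=2>0$, and with $\lambda=(3,1)$, $\tlambda=(2,2)$ it sends $[\omega]$ to $[\tomega]$, whose coefficients are all positive. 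Your flip-exclusion cannot detect it, because its premise ``the only generators $x_j$ whose image can involve $\tx_{\tc}$ are those with $j\in S$'' is exactly what fails: here the coefficient of $\tx_2$ in the image of $[\omega]$ is $\lambda_1-\lambda_2>0$. So the constraints you use do not imply the conclusion; the proof as written does not go through.

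What rules out such configurations is the one consequence of ``$F$ is an isomorphism'' you never invoke: surjectivity (unimodularity) of $F$ on $\hh^2(\,\cdot\,;\Z)$ --- the map above has determinant $-2$ --- or, equivalently, the paper's observation that all primitive square--zero classes coming from a single $\mathcal H$-factor are congruent modulo $2$ to the top generator of that factor, while classes from different factors are not congruent modulo $2$. Since $F$ is invertible it is injective modulo $2$, so it must send the classes of distinct factors of $M$ to classes of distinct factors of $\tM$; combined with the bijectivity of $F$ on the set of primitive square--zero classes this forces the induced map on factors to be a bijection with $l_s=\tl_{\ts}$ (in the example, $\pm x_1$ and $\pm x_2$ are incongruent mod $2$ but their proposed images are both congruent to $\tx_2$). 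If you insert this mod-$2$ (or determinant) argument before your last step, the rest of your plan is sound, and your sign-of-the-coefficient-of-$\tx_{\tc}$ argument is a legitimate alternative to the paper's device of comparing the strictly largest coefficient of $[\omega]$ in the $y$-basis to pin down the image of the top generator.
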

\begin{proof}
We start with the observation that $F$ must map $\hh^2(M;\Z)$ to $\hh^2(\tM;\Z)$ because the cohomology of a symplectic Bott manifold is generated by elements of degree $2$. 
(In fact, the same is true for any symplectic toric manifold.)
Assume first that both partitions of $n$ are trivial, that is,
$$(M,\omega)= \mathcal{H}(\lambda_1,\dots,\lambda_{n}) 
\quad \mbox{and} \quad 
(\tM,\tomega)= \mathcal{H}(\tlambda_1,\dots,\tlambda_{n}).$$
By inspection,  there are exactly $2n$ primitive
classes in $\hh^2(M;\Z)$ with trivial square: $\pm z_1,\dots, \pm z_n$,
where $z_n = x_n$ and $z_i = 2 x_i - x_n$ for all $i < n$.
Similarly, $\pm \tz_1,\dots, \pm \tz_n$ are the primitive
classes in $\hh^2(\tM;\Z)$ with trivial square,
where $\tz_n = \tx_n$ and
$\tz_i = 2 \tx_i - \tx_n$ for all $i < n$.
Therefore, since $F$ is a ring isomorphism, there exists 
$\epsilon \in \{-1,1\}^i$ and
a permutation $\sigma$ of $\n$ such
that $F(z_i) =  \epsilon_i \tz_{\sigma(i)}$ for all $i$.
(Conversely, this defines an isomorphism for any such $\epsilon$ and $\sigma$.)
Clearly $z_1,\dots,z_n$ is a  basis for $\hh^2(M;\R)$.
In this basis, we can write 
$[\omega] = \sum_{i=1}^n \eta_i z_i$,
where 
$\eta_i = \frac{\lambda_i}{2}$ for all $i < n$
and
$\eta_n = \lambda_n + \sum_{i < n} \frac{\lambda_i}{2}$.
Similarly, $[\tomega] = \sum_{i=1}^n \teta_i \tz_i$, 
where 
$\teta_i = \frac{\tlambda_i}{2}$ for all $i < n$
 and 
$\teta_n = \tlambda_n + \sum_{i< n} \frac{\tlambda_i}{2}$.
Since $F([\omega]) = [\tomega]$, this
implies that $\epsilon_i \eta_i =  \teta_{\sigma(i)}$ for all $i$.
Since $\lambda_i$ and $\tlambda_i$ are positive for all $i$,
$\eta_i$ and $\teta_i$ are positive  for all $i$, and so this implies that $\epsilon_i = 1$  for all $i$.
Moreover, since  $\eta_n > \eta_i$ and $\teta_n > \teta_i$ for all $i < n$,
we have $\sigma(n) = n$.
Thus, $F(x_i) = \tx_{\sigma(i)}$  for all $i$.

We now turn to the general case.
To simplify notation, 
let $i_s = l_1 + \dots + l_s$ for each $s \in \m$, let $i_0 = 0$, and 
let $\lambda^{l_s}$ denote the $l_s$-tuple of
positive numbers $(\lambda_{i_{s-1}+1},\dots, \lambda_{i_s}).$
By inspection, the primitive square zero elements of $\hh^*(M;\Z)$ are precisely
$$\pm x_{i_s} \mbox{ and } \pm(2x_i-x_{i_s}) \textrm{ for }
s \in\m 
\mbox{ and } 
i_{s-1}< i <i_s 
.$$
In particular,
 each primitive square zero element sits in some subring $\hh^*(\mathcal{H}(\lambda^{l_s});\Z)\subseteq \hh^*(M;\Z)$ 
and 
every  primitive square zero element in  $\hh^*(\mathcal{H}(\lambda^{l_s});\Z)$ 
are equal to $x_{i_s}$ modulo $2$. 
Therefore, for each $s \in \m$ there exists $\ts$ with $\tl_\ts = l_s$ 
such that $F$ restricts to an isomorphism from $\hh^*(\mathcal{H}(\lambda^{l_s});\Z)$  to
$\hh^*(\mathcal{H}({\tlambda}^{\tl_\ts});\Z)$ 
and takes $\sum_{i = i_{s-1} + 1}^{i_s} \lambda_i x_i$
to $\sum_{i = \ti_{\ts-1} + 1} ^{\ti_\ts} \tlambda_i \tx_i$.
Therefore, by the previous paragraph, there is a bijection $\sigma_s \colon \{i_{s-1} + 1,\dots, i_s\}
\to \{\ti_{\ts-1} + 1, \dots, \ti_\ts\}$ with $\sigma(i_s) = \ti_{\ts}$
so that  $F(x_i) = \tx_{\sigma_s(i)}$
for all 
$i \in \{i_{s-1} + 1, \dots, i_s \}$.
Combining  $\sigma_1,\dots,\sigma_m$, we construct the required
permutation $\sigma$ of $\n$.
\end{proof}
Our next goal is to prove that
every $\Q$-trivial symplectic Bott manifold is symplectomorphic to
a product of the $\mathcal{H}_i$; see Proposition~\ref{std form}.
Let $M$ be the Bott manifold associated to a strictly upper triangular
matrix $A \in \mat$. Recall that
\begin{equation*}
\hh^*(M;\Z) =\Z[x_1,\dots,x_n]/\big(x_i^2 + \sum_j A^i_j x_i x_j \big).
\end{equation*}
For each $i \in \n$, consider the  special cohomology classes
 \begin{equation}\label{special classes}
 \alpha_i = - \sum_j A^i_j x_j \in  \hh^*(M;\Z) \quad \mbox{and} \quad  y_i = x_i -\frac{1}{2} \alpha_i \in  \hh^*(M;\Q).
 \end{equation}

\begin{definition}
\label{exceptional def}
We say that $x_k$ has {\bf even (odd)
exceptional type} if $\alpha_k = m y_\ell$ for some $\ell \in \n$ and even (respecitvely, odd) integer  $m$.
In ``coordinates", this means that  $A_\ell^k$ is even (respectively, odd)
and that $A_j^k = \frac{1}{2} A^k_\ell A_j^\ell$ for $j \neq \ell$.
Additionally,  we say that $x_k$ has {\bf special} exceptional type if $\alpha_\ell = 0$, that is,
$A_j^k = A_j^\ell = 0$ for all $j \neq \ell$.
\end{definition}

The following lemma,
due to Choi and Masuda \cite[Proposition 3.1]{CM},
  shows that $\Q$-trivial
Bott manifolds have many cohomology classes with exceptional type.
We reprove it here for the reader's convenience.

\begin{lemma}[Choi-Masuda]\label{trivial implies exceptional}
If $M$ is an $2n$-dimensional $\Q$-trivial Bott manifold
then  $x_i \in \hh^*(M;\Z) $ has exceptional type for all $i \in \n$.
\end{lemma}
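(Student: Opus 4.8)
The plan is to prove the contrapositive of the key structural statement: if some $x_i$ is not of exceptional type, then $M$ cannot be $\Q$-trivial, i.e.\ $\hh^*(M;\Q)$ is not isomorphic to $\Q[x_1,\dots,x_n]/(x_1^2,\dots,x_n^2)$. First I would recall that in $\hh^*(M;\Q)$ the relation $x_i^2 = -\sum_j A^i_j x_i x_j = \alpha_i x_i$ holds with $\alpha_i = -\sum_j A^i_j x_j$, and that $M$ is $\Q$-trivial precisely when there is a basis $y_1,\dots,y_n$ of $\hh^2(M;\Q)$ with $y_i^2 = 0$ for all $i$. The natural candidates are $y_i = x_i - \tfrac12 \alpha_i$ from \eqref{special classes}, which satisfy $y_i^2 = x_i^2 - \alpha_i x_i + \tfrac14\alpha_i^2 = \tfrac14\alpha_i^2$; so $y_i^2 = 0$ iff $\alpha_i^2 = 0$ in $\hh^*(M;\Q)$.

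The core of the argument is to analyze $\alpha_i^2$ using the strictly-upper-triangular structure of $A$. I would argue by downward induction on $i$ (starting from $i = n$, where $\alpha_n = 0$ automatically since $A^n_j = 0$ for all $j$, so $x_n$ trivially has special exceptional type). For the inductive step, write $\alpha_i = -\sum_{j>i} A^i_j x_j$ and compute $\alpha_i^2 = \sum_{j>i} (A^i_j)^2 x_j^2 + 2\sum_{i<j<j'} A^i_j A^i_{j'} x_j x_{j'} = \sum_{j>i} (A^i_j)^2 \alpha_j x_j + 2\sum_{i<j<j'} A^i_j A^i_{j'} x_j x_{j'}$. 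By the inductive hypothesis each $x_j$ with $j>i$ has exceptional type, meaning $\alpha_j$ is a rational multiple of some $y_{\ell(j)}$. The hard part will be organizing this computation to show that the condition ``$\alpha_i^2 = 0$'' forces $\alpha_i$ itself to be a multiple of a single $y_\ell$ --- equivalently, that all the $A^i_j$ with $j>i$ are supported (in the appropriate sense) on a single ``chain'' ending at one index $\ell$. I expect the cleanest route is: since $y_1,\dots,y_n$ is a $\Q$-basis of $\hh^2$ and the $y_i^2$ and $y_iy_j$ ($i\neq j$) span $\hh^4(M;\Q) \cong \hh^4((\CP^1)^n;\Q)$ with $\dim = \binom{n}{2}$, the products $y_iy_j$ for $i<j$ form a basis of $\hh^4$ and the squares $y_i^2$ vanish; rewriting $\alpha_i$ in the $y$-basis as $\alpha_i = \sum_\ell c_\ell y_\ell$ gives $\alpha_i^2 = \sum_{\ell<\ell'} 2 c_\ell c_{\ell'} y_\ell y_{\ell'}$, and vanishing of this in a basis forces $c_\ell c_{\ell'} = 0$ for all $\ell \neq \ell'$, hence at most one $c_\ell$ is nonzero. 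That is exactly the statement that $\alpha_i = m y_\ell$ for some $\ell$ and some $m \in \Q$; since $\alpha_i$ and $\alpha_\ell$ have integer coefficients in the $x$-basis, $m$ is forced to be an integer (or $\alpha_\ell = 0$, the special case), so $x_i$ has exceptional type.

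Concretely I would structure the proof as follows. Step one: translate $\Q$-triviality into the statement that $y_1,\dots,y_n$ (defined in \eqref{special classes}) form a basis of $\hh^2(M;\Q)$ with $y_i^2 = 0$ for all $i$ --- here one uses that $x_i \mapsto y_i$ is unitriangular over $\Q$ (since $\alpha_i$ involves only $x_j$ with $j>i$), hence invertible, so $\{y_i\}$ is automatically a basis, and that $\hh^*(M;\Q) \cong \hh^*((\CP^1)^n;\Q)$ iff the generators' squares can all be killed, which (because any ring isomorphism must send a primitive square-zero class to a primitive square-zero class, and the $y_i$ are the natural such classes up to the combinatorial ambiguity handled in Lemma~\ref{isopermute}) is equivalent to $y_i^2 = 0$. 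Step two: compute $y_i^2 = \tfrac14 \alpha_i^2$ as above. Step three: expand $\alpha_i$ in the $y$-basis, use the fact that $\{y_\ell y_{\ell'}\}_{\ell<\ell'}$ is a basis of $\hh^4(M;\Q)$ together with $y_\ell^2=0$, and conclude from $\alpha_i^2 = 0$ that $\alpha_i$ is supported on a single $y_\ell$. Step four: check integrality, recovering the ``coordinate'' description $A^i_\ell$ even or odd and $A^i_j = \tfrac12 A^i_\ell A^j_\ell$ for $j\neq\ell$, matching Definition~\ref{exceptional def}. The main obstacle is Step three: making rigorous that one may work in a fixed $y$-basis of $\hh^2$ in which the cross-products $y_\ell y_{\ell'}$ are linearly independent --- this requires knowing that $M$ really is $\Q$-trivial to begin with (which is the hypothesis), so that $\hh^4(M;\Q)$ has dimension $\binom n2$ and the $\binom n2$ elements $y_\ell y_{\ell'}$, being the images of a basis under an isomorphism to $\hh^*((\CP^1)^n;\Q)$, are indeed independent; once this is set up the vanishing argument is immediate.
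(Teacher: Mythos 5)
Your outline reduces the lemma to two facts: (a) $y_i^2=0$ for all $i$, and (b) the products $y_\ell y_{\ell'}$, $\ell<\ell'$, are linearly independent in $\hh^4(M;\Q)$; given these, your expansion $\alpha_i=\sum_\ell c_\ell y_\ell$ and the vanishing of $\alpha_i^2=4y_i^2$ do force $\alpha_i$ onto a single line $\Q y_\ell$, and the integrality check at the end is fine. The gap is that (a) and (b) are never actually established. Knowing that $y_1,\dots,y_n$ is a $\Q$-basis of $\hh^2(M;\Q)$ and that $\dim\hh^4(M;\Q)=\binom n2$ does not imply that the squares vanish and the cross-products form a basis: in $\Q[z_1,z_2]/(z_1^2,z_2^2)$ the basis $y_1=z_1+z_2$, $y_2=z_1-z_2$ has $y_1y_2=0$ and $y_1^2\neq 0$. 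Your fallback -- that the $y_\ell y_{\ell'}$ are ``images of a basis under an isomorphism to $\hh^*((\CP^1)^n;\Q)$'' -- is circular: the isomorphism carries the standard generators to \emph{some} square-zero classes of $\hh^2(M;\Q)$, and the assertion that these lie on the lines $\Q y_i$ is exactly what has to be proved (the appeal to Lemma~\ref{isopermute} does not help, since that lemma concerns only products of the manifolds $\mathcal H_k$ and is proved by inspection of those particular rings).

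The missing ingredient, which is how the paper argues, is a classification of square-zero classes in $\hh^2$: if $u=\sum_j u_jx_j\neq 0$ satisfies $u^2=0$ and $i$ is the smallest index with $u_i\neq 0$, then comparing coefficients of $x_ix_j$ in $0=u^2$ (using $x_i^2=-\sum_j A^i_jx_ix_j$ and the fact that all remaining terms involve only $x_jx_k$ with $j,k>i$) yields $u_j=\tfrac12 u_iA^i_j$ for all $j\neq i$, i.e.\ $u=u_iy_i$. Thus every square-zero class lies on one of the $n$ lines $\Q y_1,\dots,\Q y_n$. Since $\Q$-triviality provides $n$ linearly independent square-zero classes in $\hh^2(M;\Q)$ (equivalently, $2n$ primitive integral ones, which is the count the paper uses), all $n$ lines must consist of square-zero classes, so $y_i^2=0$ and hence $\alpha_i^2=0$ for every $i$; applying the same classification to $\alpha_i$, whose coefficients are integers with leading coefficient $-A^i_\ell$, gives $\alpha_i=my_\ell$ with $m\in\Z$, i.e.\ exceptional type. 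With this step inserted your argument closes (and your proposed downward induction becomes unnecessary); without it, the claim $y_i^2=0$ has no support.
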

\begin{proof}
To begin, let  $u \in \hh^2(M;\Z)$  be any  nonzero class with square zero.
Then $u= \sum_j u_jx_j$ for some nonzero $(u_1,\ldots,u_n) \in \Z^n$; let $i \in \n$ be the smallest
number such that 
that $u_i \neq 0$. Then
\begin{align*}
0&=u^2=u_i^2x_i^2 + 2 \sum_{j > i}u_iu_jx_ix_j+ (\textrm{terms involving }x_jx_k\textrm{ for }j,k>i)\\
&=-u_i^2\sum_{j}A^i_jx_ix_j+ 2 \sum_{j \neq i}u_iu_jx_ix_j+ (\textrm{terms involving }x_jx_k\textrm{ for }j,k>i);
\end{align*}
hence,
$ u_i^2A^i_j=2u_iu_j$, and so
$u_j = \frac{1}{2} u_i A^i_j$, for all $j \neq i$.
Therefore,
$u=u_i(x_i -\frac 1 2 \alpha_i) = u_i y_i$ for some $u_i  \in \Z$ and $i \in [n]$.

Since $y_i \in \hh^*(M;\Q)$, there is a unique positive integer $u_i$ such that $u_i y_i$ is a primitive
class in $\hh^*(M;\Z)$ for each $i$.
Therefore, by the previous paragraph, there are twice as many nonzero primitive classes in $\hh^*(M;\Z)$ with square zero as there are $i \in [n]$ such that $y_i^2 = 0$.
On the other hand, since $\hh^*(M;\Q) \simeq \hh((\CP^1)^n;\Q)$,  there exist exactly $2n$ nonzero primitive
elements in $\hh^*(M;\Z)$ with square zero. 
Hence, given any $i \in [n]$,
 $0 = y_i^2 = (x_i - \frac{1}{2}\alpha_i)^2 = \frac{1}{4} \alpha_i^2$.
By the previous paragraph, this implies
that $\alpha_i = m y_\ell$ for some $m \in \Z$ and $\ell \in [n]$.
Hence, $x_i$ has exceptional type.
\end{proof}

Moreover, we can use the symplectomorphisms 
constructed in  Section~\ref{sec tech}
to show that every symplectic Bott manifold
is symplectomorphic to  a Bott manifolds with a simpler cohomology ring,
in the sense that most cohomology classes with exceptional type have
a standard form.

\begin{lemma}\label{keyp}
Let $(M,\omega)$ be a  $2n$-dimensional
symplectic Bott manifold with integral symplectic form.
There exist a strictly upper-triangular matrix $\tA \in \mat$ and $\tlambda \in \Z^n$ so that the associated symplectic Bott manifold $(\tM, \widetilde \omega)$
is symplectomorphic to $(M,\omega)$  and has the following properties for all 
$i \in \n$:
\begin{enumerate}
\item If $\tx_i$ has even exceptional type, then $\Tilde \alpha_i = 0$.
\item 
If $\tx_i$ has odd special exceptional type,
 then $\Tilde \alpha_i = \Tilde y_j$ for some $j > i$.
\end{enumerate}
Here $\Tilde \alpha_i, \Tilde y_i \in \hh(\tM;\Q)=\Q[\tilde {x}_1,\ldots,\tilde {x}_n]/\big(\tilde x_i^2 + \sum_j \tA^i_j \tilde x_i \tilde x_j \big)$ are defined as in \eqref{special classes}.
\end{lemma}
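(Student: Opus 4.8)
\emph{Approach.} The plan is to bring the defining data $(A,\lambda)$ into the required shape by a finite sequence of ``slides'' of the type furnished by Corollary~\ref{keycor}, processing the rows of $A$ from the bottom upward. Since $\omega$ is integral, $[\omega]=\sum_i\lambda_i x_i$ lies in $\hh^2(M;\Z)$, which is free on $x_1,\dots,x_n$; hence $(M,\omega)$ is defined by some $(A,\lambda)$ with $\lambda\in\Z^n$. The crucial bookkeeping point is that a slide at a pair $(k,\ell)$, as in Corollary~\ref{keycor}, changes only the entry $A^k_\ell$, some entries in rows strictly above row $k$, and the vector $\lambda$; every entry $A^i_j$ with $i\ge k$ other than $A^k_\ell$ is left unchanged. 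Consequently, if we treat $k=n,n-1,\dots,1$ in turn, then once row $k$ is in final form it is never disturbed again, and since $\alpha_k=-\sum_{j>k}A^k_j x_j$ and the classes $y_j$ that can occur in it involve only rows $j>k$ that are already frozen, the exceptional type of $x_k$ and the exact value of $\alpha_k$ are frozen too. So it suffices to verify (1) and (2) for row $k$ at the moment we finish processing it.

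\emph{Processing a single row.} Assume rows $k+1,\dots,n$ are already final. If $x_k$ is not of exceptional type there is nothing to do. Otherwise $\alpha_k=m\,y_\ell$ for a unique index $\ell>k$ (with $m=0$, $\ell$ arbitrary, permitted), and I run the following loop: while $x_k$ is exceptional with $\alpha_k=m\,y_\ell$ and $m$ even and nonzero, slide at $(k,\ell)$ with $c=A^k_\ell/2=-m/2\in\Z$. The hypotheses of Corollary~\ref{keycor} hold, because the identity $\alpha_k=m\bigl(x_\ell-\tfrac12\alpha_\ell\bigr)$ forces $A^k_j=c\,A^\ell_j$ for all $j\ne\ell$, and $|2c-A^k_\ell|=0\le|A^k_\ell|$. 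After the slide $A^k_\ell=0$, and a direct computation shows $\alpha_k$ has become $-\tfrac m2\,\alpha_\ell$; if this is again of exceptional type, say a multiple of $y_{\ell'}$, then $\ell'$ is the index for which $\alpha_\ell$ is a multiple of $y_{\ell'}$, and $\ell'>\ell$ since $\alpha_\ell$ involves only the variables $x_j$ with $j>\ell$. As these indices lie in $\{k+1,\dots,n\}$, the loop terminates.

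\emph{The final state.} When the loop halts, $x_k$ is in one of three states: $\alpha_k=0$; $x_k$ is not of exceptional type (so $\alpha_k\ne0$); or $x_k$ is exceptional with $\alpha_k=m\,y_\ell$, $m$ odd. In the first two states, (1) and (2) hold for row $k$ vacuously or trivially. In the third, if $\alpha_\ell\ne0$ then $x_k$ has odd non-special exceptional type, which (1)--(2) do not constrain; while if $\alpha_\ell=0$ then $x_k$ has odd special exceptional type, whence $A^\ell_j=0=A^k_j$ for every $j\ne\ell$, and one last slide at $(k,\ell)$ chosen so that $A^k_\ell$ becomes $-1$ yields $\alpha_k=x_\ell=y_\ell$ with $\ell>k$, which is exactly (2). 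This final slide is legitimate: $A^k_j=0=c\,A^\ell_j$ for $j\ne\ell$, and solving $2c-A^k_\ell=-1$ produces an integer $c$ with $|2c-A^k_\ell|=1\le|A^k_\ell|$ because $A^k_\ell=-m$ is a nonzero odd integer. Performing all of this for $k=n,n-1,\dots,1$ produces $(\tA,\tlambda)$ with $\tlambda\in\Z^n$; composing the symplectomorphisms supplied by the successive applications of Corollary~\ref{keycor} shows that $(\tM,\tomega)$ is symplectomorphic to $(M,\omega)$.

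\emph{Main obstacle.} The principal difficulty is the combinatorial bookkeeping inside the loop. At each step one must confirm that the slide being invoked really satisfies the proportionality condition $A^k_j=c\,A^\ell_j$ and the size bound $|2c-A^k_\ell|\le|A^k_\ell|$ of Corollary~\ref{keycor}, check that the strictly increasing index $\ell'$ genuinely forces the loop to stop, and verify that the even/odd and special/non-special case split at the end lands in a configuration matching (1) and (2) precisely; keeping track of the several possible shapes of $\alpha_k$ is where the care lies.
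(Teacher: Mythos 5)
Your proposal is correct and takes essentially the same route as the paper's proof: a bottom-up induction over the rows of $A$, repeatedly applying Corollary~\ref{keycor} to zero out even-exceptional entries (with termination because the relevant column index $\ell$ strictly increases) and to normalize an odd special entry to $-1$. Your explicit final-state analysis even treats the possibility that the even loop terminates in an odd special configuration a bit more carefully than the paper's phrasing of the two cases.
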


\begin{proof}
By definition, $(M,\omega)$ is the symplectic Bott manifold associated to
some strictly upper-triangular matrix $A \in \mat$ and $\lambda \in \Z^n$.

Since $\alpha_n = 0$, the manifold $M$ has properties (1) and (2) for  $i = n$.
Assume that manifold has properties (1) and (2) for all $i > k$.
We will prove that there exits a 
strictly upper-triangular matrix $\tA \in \mat$ and $\tlambda \in \Z^n$ so that the associated symplectic Bott manifold $(\tM, \widetilde \omega)$
is symplectomorphic to $(M,\omega)$ 
and has properties (1) and (2) for all $i \geq k$.
The claim then follows by induction.

Suppose first that $x_k$ has even exceptional type. 
If $\alpha_k \neq 0$,
then $\alpha_k = -2c y_\ell$ for some nonzero  $c \in \Z$ and $\ell > k$, that is,
$A^k_\ell= 2c$ and   $A^k_j = c A^\ell_j$ for all $j \neq \ell$.
Since $A$ is strictly upper triangular, this implies that $A_j^k = 0$ for all $j < \ell$;
c.f. Corollary~\ref{c:ring}.
Hence, by Corollary~\ref{keycor}, $(M,\omega)$ is symplectomorphic to 
the symplectic Bott manifold $(\tM,\tomega)$ associated to a
strictly upper triangular matrix $\tA \in \mat$ and $\tlambda \in \Z^n$ with
\begin{itemize}
\item  $\tA_j^k  = 0$ for all $j \leq \ell$, and 
\item $\tA^i_j = A_j^i$ for all $i > k$. 
\end{itemize}
The manifold $\tM$ has properties (1) and (2)
for all $i > k$.
Moreover, if $\Tilde x_k$ has even exceptional type
and $\Tilde \alpha_k \neq 0$, then 
$\Tilde \alpha_k = -2c ' \tilde y_{\ell'}$ for some nonzero $c' \in \Z$ and $\ell' > \ell$.
Hence, by repeating this process  as many times as necessary, we can ensure 
that either $\tx_k$ does not have even exceptional type or $\Tilde \alpha_k = 0$, that is,
$\tM$ has property (1) for $i = k$.

So suppose instead that $x_k$ has odd special exceptional type.
Then there exists $c \in \Z$ and $\ell > k$
such that
$A_\ell^k = 2c+1$ and $A_j^k = A_j^\ell = 0$ for all $j \neq \ell$. 
In this case, Corollary~\ref{keycor} implies that
$(M,\omega)$ is symplectomorphic to the symplectic Bott
manifold associated to a  strictly upper 
triangular  matrix $\tA \in \mat$ and $\tlambda \in \Z^n$ with
\begin{itemize}
\item $\tA_\ell^k = -1$ and otherwise $\tA_j^k = 0$ for all $j$, i.e., $\Tilde \alpha_k = \tilde{y}_\ell$, and
\item $\tA^i_j = A_j^i$ for all $i > k$. 
\end{itemize}
Hence, 
$\tM$
has properties  (1) and (2) 
for all $i \geq k$, as required.
\end{proof}

\begin{proposition}\label{std form}
Let $(M,\omega)$ be a $2n$-dimensional $\Q$-trivial symplectic   Bott manifold  with rational symplectic form.
There exists a partition $\sum_{s=1}^m l_s = n$ of $n$ and  $\lambda \in (\Q_{>0})^n$ so that
$(M,\omega)$ is symplectomorphic to  the product
$$\mathcal{H}(\lambda_1, \ldots, \lambda_{l_1})\,\times \cdots \times \mathcal{H}(\lambda_{n-l_m+1}, \ldots, \lambda_{n}).$$
\end{proposition}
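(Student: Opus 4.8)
By scaling, we may assume $\omega$ is integral, so $\lambda \in \Z^n$. By Lemma~\ref{keyp} we may replace $(M,\omega)$ by a symplectomorphic symplectic Bott manifold $(\tM,\tomega)$, associated to some strictly upper-triangular $\tA \in \mat$ and $\tlambda \in \Z^n$, so that for each $i \in \n$: if $\tx_i$ has even exceptional type then $\Tilde \alpha_i = 0$, and if $\tx_i$ has odd special exceptional type then $\Tilde \alpha_i = \Tilde y_j$ for some $j > i$. Since $(M,\omega)$ is $\Q$-trivial, so is $(\tM,\tomega)$ (being symplectomorphic, it has the same cohomology ring), so by Lemma~\ref{trivial implies exceptional} every $\tx_i$ has exceptional type. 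To lighten notation I now drop tildes and just call this manifold $(M,\omega)$, associated to $(A,\lambda)$, with every $x_i$ of exceptional type and satisfying (1)--(2) of Lemma~\ref{keyp}.

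\textbf{Step 1: every $x_i$ is of odd special exceptional type or has $\alpha_i = 0$.} Fix $i$ with $\alpha_i \neq 0$. By exceptional type $\alpha_i = m y_\ell$ for some $\ell$ and some integer $m \neq 0$; in coordinates $A^i_\ell = m$ and $A^i_j = \tfrac12 m A^\ell_j$ for $j \neq \ell$. If $m$ is even then $x_i$ has even exceptional type, and property (1) forces $\alpha_i = 0$, a contradiction. So $m$ is odd. It remains to show $x_i$ is of \emph{special} type, i.e.\ $\alpha_\ell = 0$. Suppose not; then $\alpha_\ell = m' y_{\ell'}$ with $m' \neq 0$. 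But $A$ is strictly upper triangular, so $A^i_j = 0$ for $j \le i$ and $A^\ell_j = 0$ for $j \le \ell$; combined with $A^i_j = \tfrac12 m A^\ell_j$, this already forces $\alpha_\ell$ and $\alpha_i$ to be supported on indices $> \ell$. The key point is that $\hh^*(M;\Q) \simeq \hh^*((\CP^1)^n;\Q)$ means the $y_i$ behave like an orthogonal basis of square-zero classes, and the relations $y_i^2 = 0$ together with the nesting $\alpha_i = m y_\ell$, $\alpha_\ell = m' y_{\ell'}$ are incompatible unless $m'$ is even — which is excluded since $\ell > i$ means $x_\ell$ also satisfies Step 1's conclusion (by downward induction on the index, starting from $x_n$ with $\alpha_n = 0$). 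Running this induction from $i = n$ downward, property (2) then gives that whenever $\alpha_i \neq 0$ we have $\alpha_i = y_j$ for some $j > i$ with $\alpha_j = 0$.

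\textbf{Step 2: reading off the product structure.} Define a relation on $\n$ by $i \to j$ if $\alpha_i = y_j$ (equivalently $A^i_j = 1$ and all other entries of the $i$-th row vanish), and $i$ is a \emph{root} if $\alpha_i = 0$. By Step 1 every $i$ is either a root or satisfies $i \to j$ for a unique root $j > i$; moreover if $i \to j$ then $j$ is a root, so the relation defines a partition of $\n$ into blocks, one per root, where the block of root $j$ is $\{j\} \cup \{ i : i \to j\}$. For a root $j$ with block $\{i_1 < \dots < i_{r} = j\}$, the submatrix of $A$ on these indices has $A^{i_s}_{i_r} = 1$ for $s < r$ and all other entries zero; comparing with the matrix defining $\mathcal{H}_r$ in Section~\ref{sec proof} (where $A^i_n = -1$), and applying Lemma~\ref{change sign} with $k = j$ to flip the sign $A^{i_s}_j = 1 \mapsto -1$ (which changes no other relevant entries since $A^{i_s}_{i'} = 0$ for $i' \ne j$), we see the restriction of $(M,\omega)$ to this block is the symplectic Bott manifold $\mathcal{H}(\lambda_{i_1},\dots,\lambda_{i_r})$ after reindexing the block as $1,\dots,r$. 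Since the blocks are ``independent'' (the matrix $A$, after the sign changes, is block-diagonal in the block structure, so the defining data of $M$ splits as a product and $\omega$ as a sum), Lemma~\ref{permute} supplies a symplectomorphism reordering the blocks to be consecutive, and we conclude $(M,\omega)$ is symplectomorphic to the product $\mathcal{H}(\lambda_1,\dots,\lambda_{l_1}) \times \cdots \times \mathcal{H}(\lambda_{n - l_m + 1},\dots,\lambda_n)$ for the partition $\sum_s l_s = n$ given by the block sizes. Positivity of the $\lambda_i$ follows from Proposition~\ref{kahler condition} (each $\Xi(\{i\}) = \lambda_i > 0$), and after the reduction the $\lambda_i$ are rational (they arise from the original rational form via affine-integral moves), so $\lambda \in (\Q_{>0})^n$.

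\textbf{Main obstacle.} The delicate point is Step 1: pinning down that exceptional-type classes in a $\Q$-trivial Bott manifold, after the normalization of Lemma~\ref{keyp}, must arrange themselves into the simple ``rooted tree of depth one'' pattern rather than a longer chain $\alpha_{i_0} = y_{i_1}, \alpha_{i_1} = y_{i_2}, \dots$. This is really a combinatorial fact about the poset of square-zero classes and must be extracted carefully from $\hh^*(M;\Q) \cong \hh^*((\CP^1)^n;\Q)$ — essentially the content of Choi--Masuda's analysis — using the interplay between $y_i^2 = 0$, strict upper-triangularity of $A$, and parity constraints coming from property (2); once that pattern is established, the identification with products of $\mathcal{H}_r$'s is bookkeeping via Lemmas~\ref{change sign} and~\ref{permute}.
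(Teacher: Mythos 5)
Your overall strategy is the paper's: rescale to integral $\omega$, normalize via Lemma~\ref{keyp}, invoke Lemma~\ref{trivial implies exceptional} to get exceptional type for every $x_i$, show that each $\alpha_i$ is either $0$ or equal to $y_j$ for a ``root'' $j$, and then assemble blocks and conclude with Lemma~\ref{permute}. But Step 1, which you yourself flag as the main obstacle, has a genuine gap: you assert that the nesting $\alpha_i = m y_\ell$, $\alpha_\ell = m' y_{\ell'}$ with $m$ odd is ``incompatible unless $m'$ is even'' because ``the $y_i$ behave like an orthogonal basis of square-zero classes.'' That is not the reason, and over $\Q$ there is no incompatibility at all: the relations $y_i^2=0$ are perfectly consistent with arbitrarily long chains. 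The actual obstruction is integrality of $A$: if $\alpha_i=(2m+1)y_\ell=(2m+1)\big(x_\ell-\tfrac12\alpha_\ell\big)$ and $\alpha_\ell=(2m'+1)\big(x_{\ell'}-\tfrac12\alpha_{\ell'}\big)$, then, since $\alpha_{\ell'}$ involves only $x_j$ with $j>\ell'$, the coefficient of $x_{\ell'}$ in $\alpha_i$ is $-\tfrac12(2m+1)(2m'+1)$, so $A^i_{\ell'}=\tfrac12(2m+1)(2m'+1)\notin\Z$, a contradiction. This one computation --- exactly what the paper does --- is what forces $x_\ell$ to have even exceptional type, hence $\alpha_\ell=0$ by property (1), hence $x_i$ has \emph{special} odd type so property (2) applies; without it your downward induction does not close, and deferring to ``Choi--Masuda's analysis'' leaves the central point unproved.

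There is also a sign slip in Step 2: since $\alpha_i=-\sum_j A^i_j x_j$, the relation $\alpha_i=y_j$ with $\alpha_j=0$ means $A^i_j=-1$, not $+1$, which already matches the convention $A^i_n=-1$ defining $\mathcal{H}_n$; no sign flip is needed. Moreover the application of Lemma~\ref{change sign} you describe (with $k=j$, the root) would negate the root's row, which is zero, so it does nothing, while negating the row $i_s$ would also alter $\lambda$ in the root's slot, contrary to your parenthetical claim. Once the entries are read correctly, the block bookkeeping, the choice of the permutation putting each root last in its block, and the appeal to Lemma~\ref{permute} go through exactly as in the paper.
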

\begin{proof}
By rescaling if necessary, we may assume that $\omega$ is integral.

On the one hand, by Proposition~\ref{keyp}, we may assume that $(M,\omega)$
is the symplectic Bott manifold associated to a strictly  upper-triangular matrix $A \in \mat$
and $\lambda \in (\Z_{>0})^n$ with
the following properties for all $i \in \n$:
\begin{enumerate}
\item If $x_i$ has even exceptional type,  then $\alpha_i = 0$.
\item 
If $x_i$ has odd special exceptional type,  then $ \alpha_i =  y_j$ for some $j > i$.
\end{enumerate}

On the other hand, since $M$ is $\Q$-trivial, Lemma~\ref{trivial implies exceptional} below implies
that $x_i$ has exceptional type for all $i$, that is,  $\alpha_i = m y_j$ for some $m \in \Z$ and $j > i$.

So fix any $i\in \n$.  
If $x_i$ has even exceptional type, then (1) implies that $\alpha_i = 0$.
If $x_i$  has odd exceptional type, then $\alpha_i=(2m+1)y_j=(2m+1)(x_j-\frac 1 2 \alpha_j)$ for some
$m \in \Z$ and  $j>i$. 
If $x_j$  also has  odd exceptional type, then 
$\alpha_j=(2m'+1)(x_{j'}-\frac 1 2 \alpha_{j'})$ for some $m' \in \Z$
and $j' > j$, and so $A^i_{j'}=\frac 1 2(2m+1) (2m'+1) \notin \Z$. Since this is impossible,
$x_j$ has even exceptional type. As we've seen, this implies that $\alpha_j = 0$,
that is, $x_i$ has special exceptional type.
By  (2) this implies that  $\alpha_i=y_j$.
In conclusion, either $\alpha_i = 0$, or there exists $j > i$ such that $\alpha_i = y_j$ and 
$\alpha_j = 0$.

Let $\mathcal{E} = \{i_1,\dots,i_m\}$ be the set of $i \in \n$  such that $\alpha_i = 0$.
For each $i \in \mathcal{E}$  let
$$B_i:=\{i\} \cup \{ j \mid  \alpha_j= y_i\}$$
and let $l_i$ be the number of elements in $B_i$.
%let $\ti_s = l_1 + \dots + l_s$ for all $s \in [m]$, and $\ti_0 = 0$.
By the previous paragraph,  $B_1,\dots, B_m$  is  a partition of $\n$,
that is $l_1 + \dots + l_m = n$.
Hence, there exists a permutation $\sigma$ of $\n$ such that, for all $j \in \n$ and $s \in \m$,
we have $\sigma(i_{s-1}) < \sigma(j) \leq \sigma(i_s)$ exactly if $j \in B_{i_s}$. (Here, we let $\sigma(i_0) = 0$.)
Choose $\tlambda \in (\Q_{>0})^n$ so that $\lambda_i = \tlambda_{\sigma(i)}$
for all $i$, and consider the symplectic Bott manifold
$$(\tM, \tomega) = 
\mathcal{H}(\tlambda_1, \ldots, \tlambda_{l_1})\,\times \cdots \times \mathcal{H}(\tlambda_{n-l_m+1}, \ldots, \tlambda_{n}).$$
There is a ring isomorphism $F \colon \hh^*(M) \to \hh^*(\tM)$ 
such that $F([\omega]) = [\tomega]$ and $F(x_i) = \tx_{\sigma(i)}$ for all $i$.
Therefore, the claim follows from Lemma~\ref{permute}.
\end{proof}

The proof of the main theorem is now immediate.
\begin{proof}[Proof of Theorem \ref{main theorem}]
Let $(M,\omega)$ and $(\tM,\tomega)$
be  $\Q$-trivial symplectic Bott manifolds with rational symplectic forms,
and let $F \colon \hh^*(M;\Z) \to \hh^*(\tM;\Z)$ be a ring isomorphism such that 
$F\big([\omega]\big) = [\tomega]$. 
Let $n = \dim \hh^2(M;\Q)$.
By assumption,
$$\dim M =  2 \dim \hh^2(M;\Q) = 2 \dim \hh^2(M;\Q) = \dim \tM.$$
Hence, by Proposition \ref{std form} there exist partitions
 $\sum_{s=1}^m l_s = \sum_{s = 1}^\tm \tl_s = n$ of $n$
and $\lambda, \tlambda \in (\Q_{>0})^n$ such that
\begin{gather*}
(M,\omega) := \mathcal{H}(\lambda_1,\dots,\lambda_{l_1} )
\times \cdots \times \mathcal{H}(\lambda_{n - l_m + 1},\dots,\lambda_{n}), \\
(\tM,\tomega) := \mathcal{H}(\tlambda_1,\dots,\tlambda_{\tl_1} )
\times \cdots \times \mathcal{H}(\tlambda_{n - \tl_\tm + 1},\dots,\tlambda_{n}).
\end{gather*}
Hence, combining Lemmas~\ref{isopermute} and \ref{permute},
there exists a symplectomorphism $f$  from  $(\tM,\tomega)$ to
$(M,\omega)$ so that that $f^* = F$.
\end{proof}
As a corollary, we obtain a complete classification up to symplectomorphism  of $\Q$-trivial symplectic Bott manifolds with rational symplectic forms.
Recall that  a {\bf pointed set} is  a pair $(A,a)$, where $A$ is a set
and $a \in A$.
\begin{corollary}\label{cor classification}
Fix a positive integer $n$.
There is a one-to-one correspondence between
symplectomorphism classes of $2n$-dimensional symplectic Bott manifolds with rational symplectic forms
and multisets of $n$ positive rational numbers  equipped with a partition into pointed multisets.
\end{corollary}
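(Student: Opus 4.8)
The plan is to combine Proposition~\ref{std form}, which puts every $\Q$-trivial symplectic Bott manifold with rational form into a \emph{standard} product shape $\mathcal{H}(\lambda_1,\ldots,\lambda_{l_1})\times\cdots\times\mathcal{H}(\lambda_{n-l_m+1},\ldots,\lambda_n)$, with Lemmas~\ref{isopermute} and~\ref{permute}, which together say exactly when two standard products are symplectomorphic; the rest is translating the bookkeeping into the language of pointed multisets. Concretely, to such a standard product (write $i_s=l_1+\cdots+l_s$ and $i_0=0$) I would attach the multiset $\{\lambda_1,\ldots,\lambda_n\}\subset\Q_{>0}$ together with its partition into the blocks $\{\lambda_{i_{s-1}+1},\ldots,\lambda_{i_s}\}$, each block regarded as \emph{pointed} at the entry $\lambda_{i_s}$, which is the one belonging to the base $\CP^1$ of the factor $\mathcal{H}(\lambda^{l_s})$. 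Since by Proposition~\ref{std form} every symplectomorphism class contains such a representative, one can define a map $\Psi$ from symplectomorphism classes to this kind of data by choosing a standard representative and reading off the associated multiset-with-pointed-partition; it then remains to show $\Psi$ is well defined, injective, and surjective.

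First I would establish the rigidity input: if two standard products $\prod_s\mathcal{H}(\lambda^{l_s})$ and $\prod_{\ts}\mathcal{H}(\tlambda^{\tl_{\ts}})$ are symplectomorphic, they carry identical data. A symplectomorphism between them induces a ring isomorphism $F$ on integral cohomology with $F([\omega])=[\tomega]$, so Lemma~\ref{isopermute} supplies a permutation $\sigma$ of $\n$ with $F(x_i)=\tx_{\sigma(i)}$; and, reading the proof of that lemma, $\sigma$ carries blocks to blocks, sends each distinguished index $i_s$ to the distinguished index $\ti_{\ts}$, and satisfies $\lambda_i=\tlambda_{\sigma(i)}$ for all $i$. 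This is precisely an isomorphism of the two multisets-with-pointed-partitions, so $\Psi$ is well defined (any two standard representatives of a single class are symplectomorphic, hence carry the same data).

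Next I would prove the converse, that standard products carrying the same data are symplectomorphic. Given the data, there is a permutation $\sigma$ of $\n$ matching the two block decompositions, sending distinguished indices to distinguished indices, with $\lambda_i=\tlambda_{\sigma(i)}$. Since inside each factor $\mathcal{H}_\ell$ the defining strictly upper-triangular matrix has $A^i_j=-1$ precisely when $i$ and $j$ lie in the same block with $j$ its last (distinguished) index and $i\neq j$, and vanishes otherwise, and $\sigma$ preserves these relations, the assignment $x_i\mapsto\tx_{\sigma(i)}$ is a ring isomorphism $F$; and $\lambda_i=\tlambda_{\sigma(i)}$ forces $F([\omega])=[\tomega]$, so Lemma~\ref{permute} produces a symplectomorphism. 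This yields injectivity of $\Psi$. Surjectivity is then immediate: any multiset of $n$ positive rationals with a partition into pointed multisets can be listed so that each part occupies a consecutive block with its distinguished element last, giving $\lambda\in(\Q_{>0})^n$ and a partition $l_1,\ldots,l_m$ of $n$, and $\prod_s\mathcal{H}(\lambda^{l_s})$ realizes the data. Combining these facts gives the asserted bijection.

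The hard work is entirely contained in Proposition~\ref{std form} and Lemma~\ref{isopermute}, both already established, so the genuinely new content is modest. The one place I expect to need real care is pinning down the \emph{pointed} structure: I must argue that the distinguished entry of each block, the one indexed by the base $\CP^1$, is a symplectomorphism invariant rather than an artifact of the presentation. This is exactly the observation inside the proof of Lemma~\ref{isopermute} that, in the square-zero basis $z_1,\ldots,z_\ell$ of $\hh^2(\mathcal{H}_\ell;\R)$, the coefficient $\eta_\ell=\lambda_\ell+\tfrac12\sum_{i<\ell}\lambda_i$ of $[\omega]$ strictly exceeds all the other coefficients, which forces any isomorphism to fix this class. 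The complementary point, that the ordering of the non-distinguished entries within a block does not affect the symplectomorphism type, is easier and in fact falls out of the converse direction above, since reordering them leaves the data unchanged.
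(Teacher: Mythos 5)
Your proposal is correct and follows essentially the same route as the paper: standard representatives via Proposition~\ref{std form}, the forward (rigidity) direction via Lemma~\ref{isopermute} (the permutation $\sigma$ matching $\lambda$'s and preserving the block/pointed structure, equivalently $A_j^i=\tA_{\sigma(j)}^{\sigma(i)}$), and the converse plus surjectivity via Lemma~\ref{permute} and direct construction. The bookkeeping with pointed multisets, including the invariance of the distinguished entry, is exactly the paper's argument.
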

\begin{proof}
As we showed in the beginning of this section, there is a  symplectic Bott manifold
with rational symplectic form 
$$(M,\omega)
 = \mathcal{H}(\lambda_1, \ldots, \lambda_{l_1})\,\times \dots \times \mathcal{H}(\lambda_{n-l_m+1}, \ldots, \lambda_{n})$$
associated to any partition $\sum_{i=1}^m l_i = n$ of $n$ and $\lambda \in (\Q_{>0})^n$.

If $(\tM,\tomega)$ is the symplectic Bott manifold
associated to another
partition $\sum_{i=1}^\tm \tl_i = n$ of $n$ and $\tlambda \in (\Q_{>0})^n$, 
then $M$ and $\tM$ are symplectomorphic exactly if $\{\lambda_i\}$ and $\{\tlambda_i\}$ agree as multisets with partitions into pointed multisets.
To see this, assume first that  $M$ and $\tM$ are symplectomorphic.
Then there exists a ring isomorphism  $F \colon \hh^*(M;\Z) \to \hh^*(\tM;\Z)$ such that $F([\omega]) = [\tomega]$.
Hence, by Lemma~\ref{isopermute}, 
there exists a permutation $\sigma$ of $[n]$ such that $F(x_i) = \tx_{\sigma(i)}$ for all $i \in [n]$.  This implies that $\lambda_i = \tlambda_{\sigma(i)}$
for all $i$, and that $A_j^i = \tA_{\sigma(j)}^{\sigma(i)}$ for all $i,j \in [n]$, or equivalently, that  $\sigma$ induces an isomorphism of multisets with partitions into pointed multisets. 
Conversely, if a permutation $\sigma$ of $[n]$ induces an isomorphism of
multisets with partitions into pointed multisets, then it also induces
a ring isomorphism $F \colon \hh^*(M;\Z) \to  \hh^*(\tM;\Z)$ 
such that $F([\omega]) = [\tomega]$ and $F(x_i) = F(\tx_{\sigma(i)})$ for all $i$.
Therefore, by Lemma~\ref{permute}, $M$ and $\tM$ are symplectomorphic.

Finally, by Proposition \ref{std form}, every symplectic Bott manifold with rational symplectic
form is symplectomorphic to the  symplectic Bott manifold  associated
to some partition $\sum_{i=1}^m l_i = n$ and some $\lambda \in (\Q_{>0})^n$.
\end{proof}

In comparison, by a theorem of Choi and Masuda,
the diffeomorphism type of $M$ is determined by the partition of $n$
\cite[Theorem 4.1]{CM}.

\end{document}